\DeclareMathOperator{\ESF}{ESF}
\newcommand{\ESFn}{\ESF(\alpha,n)}
\renewcommand{\vec}{\mathbf }
\renewcommand{\emptyset}{\varnothing}
\newcommand{\E}{\mathbf{E}}
\renewcommand{\P}{\mathbf{P}}
\renewcommand{\emptyset}{\varnothing}
\newcommand{\inv}{^{-1}}
\newcommand{\f}{\frac}
\newcommand{\ind}[1]{\mathbf{1}{\{#1\}} }
\definecolor{dark_green}{RGB}{1, 180, 1}
\DeclareMathOperator{\Poi}{Poi}
\renewcommand{\l}{\ell}
\newcommand{\C}{\mathcal C}
\theoremstyle{plain}
\newtheorem{thm}{Theorem}
\newtheorem{lemma}[thm]{Lemma}
\newtheorem{prop}[thm]{Proposition}
\newtheorem{cor}[thm]{Corollary}
\newtheorem{question}{Question}
\newtheorem*{claim}{Claim}
\theoremstyle{remark}
\newtheorem{remark}[thm]{Remark}
\newcommand{\Y}{\mathbf Y}
\renewcommand{\C}{\mathbf C}
\renewcommand{\L}{\mathscr L}
\newcommand{\X}{\mathbf X}
\newcommand\m\mathbb
\newcommand\ve\varepsilon
\newcommand{\salp}{strong $\alpha$-logarithmic property}
\newcommand{\HOX}[1]{\marginpar{\footnotesize #1}}
\renewcommand\epsilon\ve
\author[G. Brito]{Gerandy Brito}
\address{Department of Mathematics, University of Washington}
\email{gerandy@math.washington.edu}
\author[C. Fowler]{Christopher Fowler}
\address{Department of Mathematics, University of Washington}
\email{cff2008@math.washington.edu}
\author[M. Junge]{Matthew Junge}
\address{Department of Mathematics, Duke University}
\email{jungem@math.duke.edu}
\author[A. Levy]{Avi Levy}
\address{Department of Mathematics, University of Washington}
\email{avius@math.washington.edu}
\begin{document}

\title{Ewens sampling and invariable generation}

\begin{abstract}
We study the number of random permutations needed to invariably generate 
the symmetric group, $S_n$, when the distribution of cycle counts has the strong $\alpha$-logarithmic property. The canonical example is the Ewens sampling formula, for which the number of $k$-cycles relates to a conditioned Poisson random variable with mean $\alpha/k$. The special case 
 $\alpha =1$ corresponds to uniformly random permutations, for which it was recently shown that exactly four are needed.
 
 For strong $\alpha$-logarithmic measures, and almost every $\alpha$, 
  we show that precisely $\left\lceil ( 1- \alpha \log 2 )^{-1}  \right\rceil$ permutations are needed to invariably generate $S_n$.
   A corollary is that for many other probability measures on $S_n$ no bounded number of permutations will invariably generate $S_n$ with positive probability.
%
Along the way we generalize classic theorems of Erd\H{o}s, Tehran, Pyber, Luczak and Bovey to permutations obtained from the Ewens sampling formula.   






\end{abstract}

\maketitle

\tableofcontents

\section{Introduction}

In 1934, van der Waerden raised the question of the minimal number of uniformly random permutations required to invariably generate $S_n$ with positive probability \cite{Waerden1934}. 
Recent breakthroughs by Pemantle, Peres, Rivin \cite{ppr} and Eberhard, Ford, Green \cite{efg} resolved van der Waerden's question, showing that precisely four permutations are required.
We consider the same question in the context of more general permutation measures related to the cycle structure, primarily the Ewens sampling formula (1) introduced in [Ewe72]. %
It aligns a permutation's cycle structure with conditioned independent Poisson random variables.


Elements $g_1,\ldots,g_m$ of a group $G$ {\em generate} the group if the smallest subgroup containing them is $G$ itself. The {\em symmetric group} $S_n$ is the group of permutations of the set $[n]:=\{1,\ldots,n\}$. Permutations $\pi_1,\ldots,\pi_m$ {\em invariably generate} $S_n$ if for all $\sigma_1,\ldots,\sigma_m\in S_n$ the elements $\sigma_1\pi_1\sigma_1^{-1},\ldots,\sigma_m\pi_m\sigma_m^{-1}$ generate $S_n$.
%
%
%
The definition of invariable generation is conjugacy invariant. Conjugacy classes of the symmetric group are parametrized by the {\em cycle type} of a permutation, defined to be the multiset of cycle lengths in the cycle decomposition of a permutation. For this reason, invariable generation concerns only the cycle structure of permutations, rather than the action on $[n]$.
%

Recent developments show that cycle counts converge to independent Poisson random variables for many measures on $S_n$ as $n$ tends to infinity.
Arratia, Barbour and Tavare have shown that strong $\alpha$-logarithmic measures have this property and numerous further results \cite{arratia, arratia1992, arratia2000, arratia2016}. 
Measures with general cycle weights are considered in \cite{weights1, weights2} and related permutons are introduced in \cite{permutons1}. 
Following this work, Mukherjee used Stein's method to deduce that a variety of permutation measures have Poisson limiting cycle counts \cite{permutons}. 
Furthermore, the structure of the short cycles in the Mallows measure was characterized in \cite{peled}.


 
In addition to its intrinsic interest, the minimal number of permutations required for invariable generation has applications to computational Galois theory. 
One may use this quantity to estimate the runtime of a Monte Carlo polynomial factorization algorithm \cite{g1,g3,g2,ppr}. 
This establishes a connection between the irreducible factors of polynomials and the cycle structure of permutations. 
The cycle structure of a permutation is also compared with prime factorization of integers in the survey \cite{anatomy}. 
An analogy between divisors of large integers and invariable generation of permutations is then mentioned in the introduction of \cite{efg}, postulating a link between the threshold for invariable generation and small divisors of a set of random integers.



 Despite the age of van der Waerden's question, it was only recently shown that the number of uniformly random permutations required to invariably generate $S_n$ is bounded. 
 A bound of $O(\sqrt{\log n})$ was first established in \cite{dixon}. 
 This was improved to $O(1)$ in \cite{luczak}, but the constant was large ($\approx 2^{100}$). 
 Pemantle, Peres and Rivin reduced the bound to four in \cite{ppr}, which was then shown to be sharp by Eberhard, Ford and Green in \cite{efg}.


 We extend the results for uniform permutations to the Ewens' sampling formula and beyond.
Along the way, we extend results in \cite{erdos,luczak,bovey1980probability}, replacing counting arguments with a tool known as the Feller coupling \eqref{eq:feller} introduced in \cite[p.\ 815]{feller1945fundamental}.




\subsection{Ewens sampling and the logarithmic property}

The Ewens sampling formula appears throughout mathematics, statistics and the sciences. 
In the words of Harry Crane it, ``exemplifies the
harmony of mathematical theory, statistical application,
and scientific discovery." This is stated in his survey \cite{crane}. It and its followup \cite{crane2} give a nice tour of the formula's universal character, describing applications to evolutionary molecular genetics, the neutral theory of biodiversity, Bayesian nonparametrics, combinatorial stochastic processes, and inductive inference, to name a few. The sampling formula also underpins foundational mathematics in number theory, stochastic processes, and algebra.

We start by formally defining the \emph{Ewens sampling formula}, as well as the \salp. 
Let $\Poi(\lambda)$ denote the law of a Poisson random variable with mean $\lambda$. Fix $\alpha>0$ and let $X_1,\ldots,X_n$ be independent random variables in which $X_k$ has law $\Poi(\alpha/k)$. For a permutation $\pi\in S_n$ let $C_k$ denote the number of $k$-cycles in $\pi$. More precisely, $C_k$ is the number of disjoint $k$-element subsets of $[n]$ on which $\pi$ restricts to a cyclic permutation.
%
 We say that the vector $\vec C = (C_1,\hdots, C_n)$ has distribution $\ESFn$ if it satisfies the joint distribution:
\begin{align}
\P[ C_1 = x_1, \hdots, C_n = x_n ] &= \P \Big[ X_1 = x_1, \hdots, X_n = x_n \; \Big|\; \sum_1^n k X_k = n \Big] 	 \label{eq:ESF}.
\end{align}

  The tuple $(C_1,\ldots,C_n)$ uniquely specifies the cycle type of a permutation. 
  Furthermore, it is easy to see that as $\pi$ ranges over $S_n$, the tuple $(C_1,\ldots,C_n)$ associated to $\pi$ ranges over the set
  $$
    \left\{(C_1,\ldots,C_n)\in \{0,\ldots,n\}^n\colon  \sum_{k=1}^nkC_k=n\right\}.
  $$
  Thus, \eqref{eq:ESF} specifies a measure on the conjugacy classes of $S_n$. We extend this to a measure on $S_n$ (also denoted by ESF$(\alpha,n)$) by weighting all elements of the same class uniformly. Note that ESF$(1,n)$ is the uniform measure, and therefore the results we establish regarding ESF$(\alpha,n)$ will generalize the corresponding results about uniform permutations appearing in \cite{ppr,efg}.


A more general family that includes $\ESFn$ is obtained by replacing the $X_k$ in \eqref{eq:ESF} with an arbitrary sequence of independent nonnegative integer valued random variables $Z_1,Z_2,\hdots$. To avoid intractable distortions when conditioning on $\sum k Z_k = n$, logarithmic growth of $\sum Z_k$ is traditionally assumed (see \cite{arratia2000}). 
For our purposes, we require the $Z_k$ satisfy the \emph{strong} $\alpha$-\emph{logarithmic condition}:
\begin{align}
|i \P[Z_i = 1]-\alpha| &< e(i) c_1, \label{eq:ULC1}\\
i \P[Z_i = \ell	] &\leq e(i) c_\ell, \qquad l \geq 2, \nonumber \\
&\text{where $\textstyle \sum i^{-1}e(i)$ and $\textstyle\sum \ell c_\ell$ are both finite}. \nonumber
\end{align}
Besides Poisson, the most commonly used distributions for the $Z_i$ are the negative-binomial and binomial distributions. These are called assemblies, multisets, and selections, respectively (see \cite{arratia2000} for more discussion). The proof that all three of these different distributional choices for the $Z_i$ satisfy the strong $\alpha$-logarithmic property is in \cite[Proposition 1.1]{arratia2000}. 

It is no trouble to work at this level of generality, because the limiting cycle structure of any family satisfying the {\salp}~ is the same as that for an $\ESFn$ permutation. We will describe this in more detail in Section \ref{sec:overview}, but in short, the number of $\ell$-cycles converges to an independent Poisson with mean $\alpha/\ell$.

\subsection{Statement of \thref{thm:ig}}
We state our result in terms of random variables $Z_1, Z_2, \hdots$ satisfying the strong $\alpha$-logarithmic property at \eqref{eq:ULC1}. The example to keep in mind though is the measure $\ESFn$, since all of these have the same limiting cycle counts.

 Given such a collection let $\mu_n(\alpha,Z_1,Z_2,\hdots,Z_n)$ be the measure induced on $S_n$ via the relation at \eqref{eq:ESF} with $X_k$ replaced by $Z_k$. We define $m_\alpha = m_\alpha(Z_1,Z_2,\hdots)$ to be the minimum number of permutations sampled according to $\mu_n$ to invariably generate $S_n$ with positive probability as $n \to \infty$:
\begin{align}m_\alpha = \inf_{m \geq 2} \Big\{ m \colon \inf_{n \geq 2} \P\big [\{\pi_1,\hdots,\pi_m\} \subset S_n, \pi_i \sim \mu_n, \text{ invariably generate $S_n$}\big] > 0 \Big\}.
\label{eq:m}	
\end{align}
Our theorem gives a closed formula for $m_\alpha$ (see Figure \ref{fig:1}), save for a countable exceptional set. Because we will reference it several times, we write the expression here:
\begin{align}h(\alpha) = \begin{cases}\left \lceil (1- \alpha \log 2)^{-1} \right\rceil, &0 <\alpha< 1 /\log 2\\
	\infty, & \alpha \geq 1 /\log 2
	  \end{cases} . \label{eq:h}
\end{align}

\begin{thm} \thlabel{thm:ig}
	Let $Z_1,Z_2,\hdots$ be a collection of random variables satisfying the strong $\alpha$-logarithmic condition at \eqref{eq:ULC1}. Let $m_\alpha=m_\alpha(Z_1,Z_2,\hdots)$, as in \eqref{eq:m}, be the minimum number of permutations 
	to invariably generate $S_n$ with positive probability. Let $h(\alpha)$ be as in \eqref{eq:h}. For points of continuity of $h$ 
	  it holds that $m_\alpha = h(\alpha)$.
At points of discontinuity we have $h(\alpha) \leq m_\alpha \leq h(\alpha)+1$.

\end{thm}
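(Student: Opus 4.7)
The plan is to adapt the strategy of Eberhard, Ford and Green \cite{efg} for $\alpha=1$ to the strong $\alpha$-logarithmic regime, using the Feller coupling to replace the direct combinatorial counting on which \cite{efg,ppr} rely. The first step is to reduce the failure of invariable generation to a cycle-sum event. Maximal proper subgroups of $S_n$ fall into four classes: intransitive, transitive imprimitive, primitive but not containing $A_n$, and $A_n$ itself. Using the Ewens analogues of the Erd\H{o}s--Tur\'an, Pyber, and Luczak--Bovey theorems promised by the abstract, one shows that under $\mu_n$ the probability $(\pi_1,\ldots,\pi_m)$ is forced into any non-intransitive maximal subgroup of $S_n$ tends to $0$. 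Thus the dominant failure event is
\begin{equation*}
F := \bigcup_{k=1}^{n-1} F_k, \qquad F_k := \{\text{every } \pi_i \text{ has a union of cycles of total size } k\}.
\end{equation*}

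The second step is an asymptotic for $q_k(\alpha,n) := \P[\pi\sim\mu_n \text{ has a union of cycles summing to } k]$. The Feller coupling represents the cycle counts under $\ESFn$, and more generally under any measure satisfying \eqref{eq:ULC1}, as functionals of independent Bernoullis with success probabilities $\alpha/j$ up to $O(e(j)c_1/j)$ errors, with conditioning on $\sum j Z_j = n$ becoming asymptotically benign. Generating-function arguments (the identity $-\log(1-\tfrac12)=\log 2$, combined with the density of subset sums of a Poisson--Dirichlet$(\alpha)$ partition) then give
\begin{equation*}
q_k(\alpha,n) \;=\; \Theta\!\left( \min(k,n-k)^{-(1-\alpha\log 2)} \right)
\end{equation*}
uniformly in $k\in[1,n-1]$. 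A first-moment union bound using independence of the $m$ permutations yields $\P[F] \leq \sum_k q_k(\alpha,n)^m$, whose convergence in $n$ is governed by whether $m(1-\alpha\log 2) > 1$. When $(1-\alpha\log 2)^{-1}\notin\NN$ this gives $m_\alpha \leq h(\alpha)$, and in general $m_\alpha \leq h(\alpha)+1$, since at integer thresholds the series is only logarithmically divergent.

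The matching lower bound is a second-moment / Paley--Zygmund argument applied to $N := \#\{k\in[K,n-K] : F_k \text{ occurs}\}$. For $m<h(\alpha)$ we have $m(1-\alpha\log 2)<1$, so $\E[N]\to\infty$. To pass to $\P[N\geq 1]$ bounded below, one needs near-independence of the events $\{k\in\Sigma(\pi)\}$ and $\{k'\in\Sigma(\pi)\}$ within a single $\pi\sim\mu_n$, where $\Sigma(\pi)$ denotes the set of cycle-sums. The Feller coupling is again central: conditioning on the short cycles (those of length $\leq (\log n)^{O(1)}$) reduces the joint estimate to a Fourier-analytic bound on subset sums of the long-cycle lengths, which under the strong $\alpha$-logarithmic property are asymptotically Poisson--Dirichlet$(\alpha)$ distributed after normalization. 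Raising to the $m$-th power to account for the $m$ independent permutations then delivers $\E[N^2] \leq C\,\E[N]^2$, so $\P[N\geq 1]\geq \E[N]^2/\E[N^2]$ is bounded away from $0$.

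The main obstacle is the single-permutation second-moment estimate for $\Sigma(\pi)$. Overlapping witnesses---two subset-sum representations sharing a long cycle---create positive correlations that must be quantified in an $\alpha$-dependent way, and the $\alpha$-twist of the Poisson--Dirichlet distribution prevents a direct transcription of the arguments of \cite{efg}. One would exploit the size-biased pick of a Poisson--Dirichlet$(\alpha)$ partition being $\mathrm{Beta}(1,\alpha)$ to track the contribution of the largest cycle used in each witness and peel it off. A secondary delicate point is the behavior at discontinuities of $h$: when $(1-\alpha\log 2)^{-1}\in\NN$, the borderline $\sum_k q_k^{h(\alpha)} \asymp \log n$ and logarithmic corrections decide between $h(\alpha)$ and $h(\alpha)+1$, which is why the theorem allows the off-by-one gap at this countable set.
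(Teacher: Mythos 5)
Your plan correctly identifies the high-level architecture --- reduce to common cycle-sums via the structural subgroup theorems, then first/second moment bounds backed by Fourier analysis --- but several of the concrete steps as stated would fail, and the paper routes around exactly those obstacles.

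\textbf{The union bound for the upper bound does not close.} You write $\P[F]\leq\sum_k q_k(\alpha,n)^m$ and claim convergence is governed by whether $m(1-\alpha\log 2)>1$. But for $k$ of order $1$ (and by symmetry $n-k$ of order $1$), $q_k(\alpha,n)$ is bounded away from $0$ --- e.g.\ $q_1\to 1-e^{-\alpha}$ --- so $\sum_k q_k^m$ contains finitely many $\Theta(1)$ terms and cannot be made $<1$ no matter what $m$ is. The paper's proof of \thref{ESF} avoids this by splitting into two events: $E_{n,k_0}$ (a common fixed-set size in $[k_0,n/2]$, bounded by the summable polynomial tail starting at $k_0$) and $F_{n,k_0}^c$ (one designated permutation having some cycle of length $\le k_0$), and showing the sum of these two probabilities is $<1$ by taking $k_0$ large enough that $\P[E_{n,k_0}]$ is below half the liminf of $\P[F_{n,k_0}]$. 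Your sketch has no analogue of this device.

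\textbf{$A_n$ is not ruled out by the structural theorems.} You assert that the probability of being forced into any non-intransitive maximal subgroup tends to $0$, and list $A_n$ among those classes. But $\P[\pi_1,\ldots,\pi_m\text{ all even}]$ is bounded away from $0$ (it tends to $\prod_i\P[\pi_i\text{ even}]$, and \thref{lem:odd} shows this is bounded between $\min\{\tfrac1{\alpha+1},\tfrac\alpha{\alpha+1}\}^m$ and its complement). The paper handles this by conditioning one permutation to be odd, which costs only a factor bounded away from $0$ by \thref{lem:odd} and \thref{cor:odd}.

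\textbf{The stated two-sided asymptotic for $q_k$ is not available, and Poisson--Dirichlet is the wrong regime.} The paper does not prove $q_k\asymp\min(k,n-k)^{-(1-\alpha\log 2)}$; it proves only a \emph{quenched} upper bound $\tilde p_k\ll_\ve k^{-1+(\alpha+\ve)\log 2}$ for the Poisson-sumset proxy, precisely because the ``lottery effect'' (anomalously large $\sum X_j$ or $\sum jX_j$) makes the unquenched probability genuinely larger on a vanishing but $k$-independent part of the space. For the lower bound you propose Paley--Zygmund on cycle-sums of a single permutation, controlling correlations via ``subset sums of long-cycle lengths, which are Poisson--Dirichlet$(\alpha)$.'' This is the wrong scale: the paper's Fourier argument (Lemmas \ref{lem:Green3.3}--\ref{lem:Green3.5}, Corollary \ref{cor:Green3.9}) operates on the interval $(k^{1-\beta},Dk]$ of \emph{bounded} cycle lengths, where the cycle counts are asymptotically independent Poisson, not Poisson--Dirichlet; and it is carried out entirely in the unconditioned Poisson sumset model $\L(\X)$, after which \cite[Theorem 3.1]{arratia2000} transfers the conclusion to permutations by total-variation convergence of $\C[1,k_0]$. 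Trying to run the second moment directly under the conditioning $\sum jZ_j=n$ using Poisson--Dirichlet size-biasing of the macroscopic cycles adds substantial and unnecessary difficulty, and as written would not establish the needed $\E[N^2]\ll\E[N]^2$.

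In short, the decomposition and the moment-method skeleton are right, but (i) you need the ``condition $\pi^{(2)}$ to have no short cycles'' device to make the union bound finite, (ii) you need the ``condition $\pi^{(1)}$ odd'' device to exclude $A_n$, and (iii) the lower bound should be proved in the Poisson sumset model on short cycles and transferred by TV convergence, not attacked directly via Poisson--Dirichlet correlations for the conditioned permutation.
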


\begin{figure}
	\includegraphics[width = 10 cm]{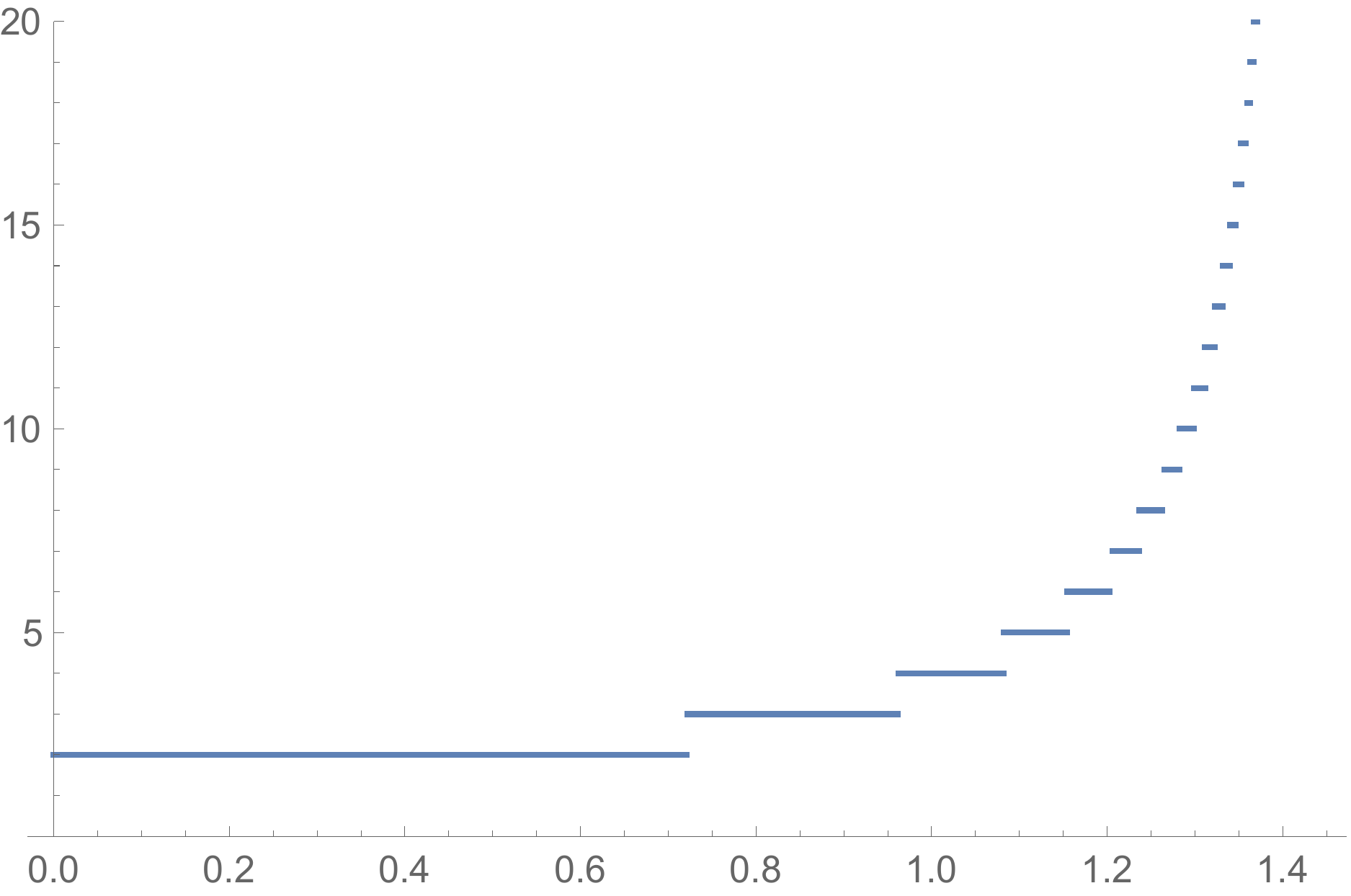}
	\caption{The graph $h(\alpha)$ defined at \eqref{eq:h}, and thus the behavior of $m_\alpha$ and $s_\alpha$. We are not sure about the value at the points of discontinuity in $h$. See \thref{q:critical} for more discussion.} \label{fig:1}
\end{figure}


This has broader implications beyond measures that have the \salp. We use it to deduce that any finite collection of random permutations with cycle counts asymptotically dominating a $\Poi(1/j\log 2)$ random variable will fail to generate a transitive subgroup of $S_n$. 

\begin{cor} \thlabel{cor:lb}
Suppose that $c_j \geq 1/\log 2$ and $X_k \sim \Poi(c_j/j)$. Let $\kappa_n$ be a sequence of probability measures on $S_n$ such that for random permutations $\pi \sim \kappa_n$ the cycle structure, $\C$ satisfies $$d\left( (C_1,\hdots, C_{k_0}) ,  (X_1,\hdots, X_{k_0})\right)_{\text{TV}} \to 0$$ for all fixed $k_0$. Then, the probability 
	that any fixed number of permutations sampled according to $\kappa_n$ invariably generate a transitive subgroup of $S_n$ goes to 0 as $n \to \infty$. 
\end{cor}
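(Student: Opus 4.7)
The plan is to reduce failure of invariable transitive generation to a statement about a common cycle-sumset, then leverage the TV hypothesis to replace $\kappa_n$ by a product of independent Poisson short cycles, and finally invoke the lower-bound half of \thref{thm:ig}, whose proof already produces common invariant subsets.

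First, a subgroup $H \le S_n$ is intransitive iff it stabilizes some set $A$ with $1 \le |A| \le n-1$, and $\pi$ stabilizes $A$ iff $A$ is a union of cycles of $\pi$. Define the cycle sumset
\begin{equation*}
\Ss(\pi) \defeq \Bigl\{\tsum_{j \ge 1} j a_j : 0 \le a_j \le C_j(\pi)\Bigr\} \cap \{1,\ldots,n-1\}.
\end{equation*}
Because conjugation in $S_n$ carries any $k$-subset to any other, $\pi_1,\ldots,\pi_m$ fail to invariably generate a transitive subgroup iff $\bigcap_{i=1}^m \Ss(\pi_i) \ne \emptyset$. Thus \thref{cor:lb} reduces to proving $\P[\bigcap_i \Ss(\pi_i) \ne \emptyset] \to 1$ as $n \to \infty$.

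Next I would truncate to short cycles. For fixed $k_0$, the set $\Ss(\pi) \cap \{1,\ldots,k_0\}$ depends only on $(C_1(\pi),\ldots,C_{k_0}(\pi))$, so by the TV hypothesis and independence across the $m$ samples, the joint law of the short cycle counts converges in total variation to a product of independent Poissons with parameters $c_\l/\l$, $c_\l \ge 1/\log 2$. Writing $\Ss_\infty^{(i)}$ for the cycle sumset built from these Poisson vectors, it suffices to show
\begin{equation*}
\lim_{k_0 \to \infty} \P\Bigl[\bigcap_{i=1}^m \bigl(\Ss_\infty^{(i)} \cap \{1,\ldots,k_0\}\bigr) \ne \emptyset \Bigr] = 1,
\end{equation*}
and then combine with TV convergence by a union bound over the $m$ coordinates.

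The key probabilistic input is a lower density bound on $\Ss_\infty^{(i)}$ at $\alpha \ge 1/\log 2$, which is exactly the estimate driving the $m_\alpha = \infty$ conclusion of \thref{thm:ig}. Its proof shows that with probability $1 - o_{k_0}(1)$ each $\Ss_\infty^{(i)}$ has density at least some $\delta > 0$ in $\{1,\ldots,k_0\}$, via first and second moment computations on the indicators $\mathbf 1\{k \in \Ss_\infty^{(i)}\}$. A further second-moment argument on $\bigcap_i \Ss_\infty^{(i)}$, exploiting independence across the $m$ Poisson vectors, then forces the common intersection to be non-empty with probability $1 - o_{k_0}(1)$.

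The main obstacle will be controlling the covariances between the indicators $\mathbf 1\{k \in \Ss_\infty^{(i)}\}$, since the cycle sumset is a highly non-linear functional of the underlying independent Poisson vector. I expect that the Feller-coupling machinery developed earlier in the paper (which generalizes the Erd\H{o}s--Tehran--Pyber--\L{}uczak--Bovey counts to \salp{} families) supplies the moment bounds needed to push the density estimate through uniformly at the critical value $\alpha = 1/\log 2$; once this is in hand, \thref{cor:lb} follows by combining density, intersection, and TV convergence as outlined above.
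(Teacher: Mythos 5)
Your reduction is the right one, and it matches what the paper's argument implicitly does: intransitivity of a subgroup containing all $m$ permutations is equivalent to a nonempty common intersection of their cycle sumsets, the restriction of that sumset to $[1,k_0]$ depends only on short cycle counts, and the TV hypothesis lets you replace those counts by independent Poisson vectors and invoke the $\alpha\ge 1/\log 2$ case of \thref{prop:lower} (this is exactly how \thref{thm:main2} is proved, with $\ESFn$ replaced by $\kappa_n$). So the skeleton is correct.

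Where the proposal goes wrong is in identifying the technical engine. The $s_\alpha=\infty$ conclusion at $\alpha\ge 1/\log 2$ is \emph{not} obtained from first and second moments of $\ind{k\in\L(\X)}$, and the Feller coupling plays no role there: the Feller coupling belongs to the permutation-side upper bound (Section 3), while the sumset lower bound (Section 5, Lemmas~\ref{lem:Green3.2}--\ref{lem:Green3.5}, \thref{cor:Green3.9}) is a purely Poisson, Fourier-analytic argument in the style of Eberhard--Ford--Green and Maier--Tenenbaum. Concretely, one builds the generating function $F(\bm\theta)$ of \eqref{eqn:Ftheta}, bounds $\int|F|^2$ via a pointwise estimate and Parseval, deduces $|S(I;\X^{(1)},\ldots,\X^{(m)})|\gg k^{m-1}$, and then aligns an extra Poisson point in each coordinate (\thref{prop:Green3.8}); iterating over disjoint dyadic-type windows and applying the second Borel--Cantelli lemma gives an a.s.\ infinite common intersection. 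A second-moment calculation directly on the indicators $\ind{k\in\L(\X^{(i)})}$ would run into exactly the covariance problem you flag, and the paper circumvents rather than controls it. Separately, you never say how the heterogeneous weights $c_j\ge 1/\log 2$ enter: \thref{prop:lower} is stated for constant $\alpha$, so you need the monotone thinning coupling (thin $\Poi(c_j/j)$ down to $\Poi(\alpha'/j)$ for some $\alpha'<1/\log 2$ with $m<h(\alpha')$, which gives $\L(\X')\subseteq\L(\X)$ coordinate-wise) before you can cite that proposition. This is the same trick the paper uses at the end of the proof of \thref{prop:lower} to handle the discontinuity points of $h$, and it is a necessary, not cosmetic, step here.
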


This corollary is particularly relevant to the results in \cite{weights1} and \cite{weights2}. They describe measures on $S_n$ that are formed by weighting cycle lengths by parameters $c_j$. The limiting cycle structure has $C_j \sim \Poi(c_j/j)$. Thus, we can create any limiting Poisson cycle structure we like. In particular, those satisfying \thref{cor:lb}.

Another source of alternate measures with limiting Poisson cycle counts comes from \cite{permutons}. This fits into the larger body of work on permutons initiated in \cite{permutons1}. Of particular interest is Mallow's measure (introduced in \cite{original_mallows}). This measure specifies a parameter $q_n>0$ that biases towards more or less inversions in a permutation. A permutation $\pi$ has probability proportional to $q_n^{\text{inv}(\pi)},$
where
$\text{inv}(\pi) := |\{(s, t) | s < t \text{ and } \pi s > \pi t\}|$.

 The recent article (\cite{peled}) begins to characterize the cycle structure of Mallow's measure. 
They find that for $(1-q_n)^{-2} \ll n$ all cycles are on the order of $o(n)$. The high density of small cycles, along with \cite[Theorem 1.4]{permutons}, which proves a limiting Poisson cycle profile, suggests that no finite collection of permutations sampled according to Mallow's measure in this regime will invariable generate $S_n$. Note that in the regime $(1-q_n)^{-2}\gg n$ the cycle counts converge to those of a uniformly random permutation. 
This is a new and exciting area. We are hopeful our result will find more applications as these objects become better understood.

\subsection{Generalizations for the Ewens sampling formula}

In proving \thref{thm:ig} we connect an approximation sumset model (described in Section \ref{sec:overview}) back to fixed-set sizes in random permutations.
This also takes place in \cite{ppr} and \cite{efg}. 
These two articles use a small-cycle limit theorem for uniformly random permutations (\cite[Theorem 1]{arratia}). Namely, that the number of $\ell$-cycles in a uniformly random permutation converges to a Poisson random variable with mean $1/\ell$ that is independent of the other small cycles.

Permutations sampled from $\mu_n$ have an analogous limit theorem (\cite[Theorem 3.2]{arratia2000}). Except now the $\ell$-cycles behave like Poisson random variables with mean $\alpha/\ell$. This lets us use similar ideas to analyze the corresponding sumset model. However, our approach diverges significantly when we connect back to random permutations. Especially in obtaining an upper bound on $m_\alpha$.  
The difference is that both \cite{ppr} and \cite{efg} have access to a large canon of results for uniformly random permutations. We do not. This requires a deep excavation where we extend classical results to $\pi \sim \ESFn$. Along the way we prove many new estimates (see \thref{lem:feller}, \thref{lem:jointcycles}, \thref{lem:2same}, \thref{prop:gcd}, \thref{claim:claim3}, and \thref{lem:odd}) for larger cycle lengths in $\ESFn$ permutations. 


We begin by showing that with high probability the only transitive subgroups containing a permutation $\pi$ sampled with distribution $\ESFn$ are $A_n$ and $S_n$. This is the analogue of what \cite[Theorem 1]{luczak} proves for uniformly random permutations. This was recently studied in more detail by Eberhard, Ford, Green in \cite{efg2}. We plan to explore this vein for $\ESFn$ permutations in future work. For our current purposes, the following result suffices. 

\begin{thm}\thlabel{thm:pyber}
	Let $\pi \sim \ESFn$. 
	Then with probability $1-o(1)$ the only transitive subgroups containing $\pi$ are $A_n$ and $S_n$.
\end{thm}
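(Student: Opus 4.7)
The plan is to adapt the Luczak--Pyber argument \cite{luczak} from the uniform setting to $\ESFn$. Any proper transitive $G\leq S_n$ with $G\not\supseteq A_n$ is either primitive or imprimitive (preserves a non-trivial block system with blocks of size $d$, $1<d<n$, $d\mid n$), and I would handle these two cases separately.

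For the primitive case, classical bounds (e.g.\ Praeger--Saxl) give $|G|\leq 4^n$ for any proper primitive $G\leq S_n$, while the number of conjugacy classes of such subgroups is polynomially bounded in $n$. Hence the union $U$ of all proper primitive subgroups has $|U|/n! \leq 4^n\cdot n^{O(1)}/n! = n^{-\omega(1)}$. To transfer to $\ESFn$, I would use the Radon--Nikodym derivative
\[
\frac{dP_{\ESFn}}{dP_{\mathrm{Uni}}}(\pi)=\frac{\alpha^{C(\pi)}\,n!}{(\alpha)_n},
\]
which is at most $n^{O(1)}$ on the event $\{C(\pi)\leq K\log n\}$, where $C(\pi)$ is the total cycle count; this event has $\ESFn$-probability $1-o(1)$ by Feller-coupling concentration. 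Combined with the super-polynomial uniform bound, this yields $P_{\ESFn}(\pi\in U)=o(1)$.

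For the imprimitive case, I would union-bound over the $n^{o(1)}$ divisors $d\mid n$ with $1<d<n$. The structural observation is that if $\pi$ preserves a block system of size $d$, every cycle length $\ell$ of $\pi$ factors as $\ell=kr$ with $k\mid n/d$ and $r\leq d$; denote the set of such $\ell$ by $\mathcal A_d$. For small $d$, I would apply the $\ESFn$ small-cycle Poisson limit (\cite[Theorem 3.2]{arratia2000}) with a growing cutoff $L=L(n)$ to bound
\[
P\bigl(C_\ell=0\,\forall\,\ell\leq L,\,\ell\notin\mathcal A_d\bigr)\approx \exp\!\Big(-\alpha\sum_{\ell\leq L,\,\ell\notin\mathcal A_d}\tfrac{1}{\ell}\Big),
\]
which tends to zero because $H_L\sim\log L$ eventually dominates $\sum_{\ell\in\mathcal A_d\cap[1,L]}1/\ell\leq \sigma_{-1}(n/d)\,H_d$. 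For medium-to-large $d$, where $\mathcal A_d$ covers most of $[1,L]$, the short-cycle argument fails; here I would instead use the joint distribution of medium-to-large cycles under $\ESFn$ (whose normalized lengths converge to a $\mathrm{GEM}(\alpha)$-type law) to argue that the top few cycle lengths typically fail the divisibility constraint cutting out $\mathcal A_d$.

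The main obstacle is the imprimitive case uniformly across $d$, particularly the intermediate regime where neither the short-cycle Poisson approximation nor the long-cycle limit alone suffices. This is precisely where the new $\ESFn$ estimates flagged in the introduction (\thref{lem:feller}, \thref{lem:jointcycles}, \thref{prop:gcd}) are expected to do the work, providing joint control over medium-to-large cycle lengths via the Feller coupling that emulates the tools available for uniform permutations. Once those are in place, the group-theoretic reduction mirrors the uniform argument.
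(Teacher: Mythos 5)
The proposal diverges from the paper's proof in both cases, and the primitive case contains a genuine error.

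\textbf{The primitive case.} Your claimed bound $|U|/n!=n^{-\omega(1)}$ for the union $U$ of proper primitive subgroups is false. The calculation $|U|\leq 4^n\cdot n^{O(1)}$ only accounts for polynomially many \emph{conjugacy class representatives}, each of order at most $4^n$; but $U$ must include all conjugates, and a single conjugacy class of a maximal primitive $M$ already covers a set of measure up to $[S_n:N(M)]\cdot|M|/n!$, which is $\Theta(1)$ when $M$ is self-normalizing. A concrete counterexample: for $n=p$ prime, $\ZZ_p<\mathrm{AGL}(1,p)$ is a proper primitive subgroup, and every $p$-cycle lies in some conjugate of it, so $|U|/n!\geq 1/p=1/n$. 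In fact under $\ESFn$ the probability of drawing a $p$-cycle is $\Theta(n^{-\alpha})$, so for $\alpha<1$ this is larger still. Since the uniform bound is only polynomially small (and in general only $o(1)$), the Radon--Nikodym transfer argument---which pays a polynomial factor $n^{O(1)}$ on the good event $\{C(\pi)\leq K\log n\}$---cannot close. The paper avoids this entirely by a minimal-degree argument: it cites (via \cite{guralnick1998minimal}, \cite{babai1981order}) that a primitive group not containing $A_n$ has minimal degree at least $(\sqrt n-1)/2$, whereas \thref{thm:bovey} shows the minimal degree of $\langle\pi\rangle$ is at most $n^{0.4}$ with high probability. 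This is robust and uniform in $\alpha$, and is why the paper proves \thref{thm:bovey} in the first place.

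\textbf{The imprimitive case.} You sketch a one-parameter union bound over block sizes $d$, acknowledge that the intermediate regime is the obstacle, and defer to ``the new $\ESFn$ estimates.'' This defers precisely the hard part. The paper's proof instead splits the block count $r$ into three explicit ranges $2\leq r\leq r_0$, $r_0\leq r\leq n/r_0$, and $n/r_0\leq r<n$ with $r_0=\exp(\log\log n\sqrt{\log n})$, and uses three distinct inputs: \thref{claim:claim3} together with the fixed-set-size bound \thref{lem:pik} for small $r$ (find a large cycle length $j_r$ with $r\nmid j_r$, forcing a proper invariant set whose size is controlled); the large-prime-factor theorem \thref{claim:claim2} for middle $r$ (a prime $q>n/r_0$ divides some cycle length, contradicting block arithmetic); and \thref{prop:gcd} together with \thref{claim:claim3} again for large $r$ (two cycle lengths would share a common divisor $>n^a$, which is unlikely). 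Your small-$d$ (large-$r$) Poisson-vanishing computation is in the spirit of the third case, but the critical case-2 regime---which is what the prime-factorization theorem \thref{claim:claim2} is designed for---is exactly the gap you flagged. Also, your structural claim that every cycle length factors as $\ell=kr$ with $k\mid n/d$ is imprecise: the number $k$ of blocks hit need only satisfy $k\leq n/d$, not $k\mid n/d$.
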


Pyber and Luczak rely on an important theorem of Bovey regarding primitive subgroups. We need the analogue of it for $\ESFn$ permutations. 
The \emph{minimal degree of} $\langle \pi\rangle = \{\pi^k \colon k \in \mathbb Z\}$ is the minimum number of elements of $\{1,\hdots,n\}$ displaced by some power of $\pi$. 

\cite[Theorem 1]{bovey1980probability} says that if $\pi$ is a uniformly random permutation, then for each $\epsilon>0$ and $0<\beta<1$ we have
$$
  \P[\text{minimal degree of }\langle \pi\rangle > n^\beta  ] <C_{\epsilon,\beta}n^{\epsilon-\beta}.
$$
We establish a weaker analogue of this result.

\begin{thm} \thlabel{thm:bovey}
Let $\pi \sim \ESFn$. For each $0<\beta<1$ and any $\alpha$ it holds that 
	$$\P[\text{minimal degree of }\langle \pi\rangle > n^\beta	] = o(1).$$
\end{thm}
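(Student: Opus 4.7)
The plan is to exhibit, with probability $1-o(1)$, a prime $p$ with $\log^2 n \le p \le n^\beta$ such that $\pi$ has exactly one cycle of length $p$ and no cycle of length $kp$ for any $k\ge 2$. Call this event $A_p$. If $A_p$ holds, set $d := \mathrm{lcm}\{\ell : C_\ell \ge 1,\ p\nmid \ell\}$. Every element of this indexing set is coprime to $p$, hence $p\nmid d$; consequently $\pi^d$ is the identity on every cycle whose length avoids $p$ and acts as a nontrivial cyclic shift on the unique $p$-cycle. Thus $\pi^d \neq \mathrm{id}$ displaces exactly $p \le n^\beta$ points, so the minimal degree of $\langle\pi\rangle$ is at most $n^\beta$.

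To estimate $\P(A_p)$, I would use the joint Poisson approximation for the cycle counts of $\pi \sim \ESFn$ from \cite[Theorem 3.2]{arratia2000}, together with the Feller-coupling estimates developed earlier in the paper (in particular \thref{lem:feller} and \thref{lem:jointcycles}). The approximation gives
\[
  \P(A_p) \;=\; (1+o(1))\,\frac{\alpha}{p}\exp\!\Big({-}\tfrac{\alpha}{p}H_{\lfloor n/p\rfloor}\Big) \;\sim\; \frac{\alpha}{p},
\]
uniformly for primes $p \in [\log^2 n, n^\beta]$, because $(\alpha/p)H_{n/p} = O((\log n)/\log^2 n) = o(1)$ on this range. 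Summing and applying Mertens' theorem,
\[
  \sum_{p\text{ prime},\ \log^2 n \le p \le n^\beta} \P(A_p) \;\sim\; \alpha\bigl(\log\log n^\beta - \log\log\log^2 n\bigr) \longrightarrow \infty.
\]

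The conclusion $\P(\bigcup_p A_p) = 1-o(1)$ then follows from a second-moment argument. For distinct primes $p,q$ in the range, a short computation using the same joint approximation gives $\P(A_p\cap A_q) = (1+o(1))\P(A_p)\P(A_q)$, the correction coming only from the shared constraints $C_{mpq}=0$ for $m \geq 1$ and tending to $1$ since $pq \ge \log^4 n$. Writing $W = \sum_p \ind{A_p}$, this yields $\Var(W) = o(\E[W]^2)$, whence Chebyshev gives $\P(W\ge 1) = 1-o(1)$. The principal difficulty is producing a joint Poisson approximation with error uniformly small over cycle indices ranging up to $n$ and simultaneously across many primes: for finite families of cycle counts the Ewens asymptotics are classical, but the uniformity needed here is precisely what the Feller-coupling refinements proved in earlier sections are designed to supply.
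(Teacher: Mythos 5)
Your structural argument is sound and is in the same spirit as the paper's proof: exhibit a small cycle length $d$ that occurs exactly once and is ``isolated'' in the divisibility sense, then take a suitable power of $\pi$. The paper implements this by taking $d<n^\beta$ with a large prime factor $p>n^{\lambda\beta}$ (\thref{lem:pyberpt1}, \thref{lem:pyberpt2}) and powering by $\Phi(\pi)/d$; you take $d=p$ a prime directly and power by the lcm of the non-multiples of $p$. These are both valid implementations. The main structural divergence is in how the existence step is argued: the paper sums $C_d$ over $d\in Q_n$ (harmonic sum $\sim \beta\log n$), invokes the TV approximation of \cite[Theorem 3.1]{arratia2000} to get a $\Poi(\kappa_n)$ with divergent mean, and notes that almost surely only finitely many $d$ have $C_d\ge 2$; you instead restrict to primes (Mertens gives $\sim\log\log n$, much slower divergence) and run a second-moment argument. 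Both routes can in principle work, but the second-moment route is substantially more fragile.

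The genuine gap is in the covariance step. You assert that the only source of correlation between $A_p$ and $A_q$ is the shared constraint set $\{C_{mpq}=0\}$, but this is only true if the $C_\ell$ were jointly independent. They are not: the Ewens distribution conditions on $\sum \ell C_\ell = n$, and under the Feller coupling the $C_\ell$ are obtained from the independent $Y_\ell$ by a shared random insertion $\ind{J_n=\ell}$ and a shared random set of $O(1)$ deletions. These perturbations land at random locations, introducing dependence between $A_p$ and $A_q$ that has nothing to do with multiples of $pq$. In fact, since $A_p$ monitors roughly $n/p$ cycle lengths and the perturbation's location is roughly uniform, the probability that the perturbation impinges on $A_p$ is of order $1/p$---the \emph{same} order as $\P(A_p)$ itself---so this effect is not a priori negligible. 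Controlling it uniformly across pairs and proving that it contributes $o((\E W)^2)=o((\log\log n)^2)$ to the variance is exactly the kind of delicate estimate the paper's \thref{lem:pyberpt2} and \thref{lem:feller} carry out for a \emph{single} prime, and you would need a two-prime version which is not supplied by those lemmas and is not a ``short computation.'' A second, more minor point: the constraints $C_{kp}=0$ for $kp$ up to $n$ lie outside the range $o(n)$ where \cite[Theorem 3.1]{arratia2000} gives TV convergence to independent Poissons; what you actually cite, \cite[Theorem 3.2]{arratia2000}, concerns the large-index Ewens approximation, not the small-index Poisson one, and in any case the Feller-coupling inequality $C_\ell\le Y_\ell+\ind{J_n=\ell}$ together with the $\P[J_n=\ell]$ estimates of \thref{lem:feller} must be invoked explicitly here (as the paper does in \thref{lem:pyberpt2}), not as a black box.
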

 
Establishing \thref{thm:pyber} also requires a generalization of a classical theorem of  Erd\"os and Tur\'an (see \cite[Theorem V]{erdos}).

\begin{thm} \thlabel{claim:claim2}
Let $\pi \sim \ESFn$ and $\omega(n) \to \infty$. The probability that the largest prime which divides $\prod_{\ell=1}^n \ell^{C_\ell}$ is larger than $n \exp(-\omega(n) \sqrt{\log n})$ is $1-o(1)$.	
\end{thm}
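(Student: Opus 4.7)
\emph{Plan.} The plan is a second-moment argument applied to
\[
Y := \sum_{\ell \in B} C_\ell, \qquad B := \{\ell \in [n] : \ell \text{ has a prime factor exceeding } y_n\}, \qquad y_n := n e^{-\omega(n)\sqrt{\log n}}.
\]
Since the largest prime factor of $\prod_\ell \ell^{C_\ell}$ exceeds $y_n$ precisely when $Y \geq 1$, it suffices to show $\P[Y = 0] = o(1)$. The first ingredient is the number-theoretic input $S_B := \sum_{\ell \in B} 1/\ell \sim \omega(n)^2/2 \to \infty$. Since $y_n > \sqrt n$ for large $n$ (WLOG $\omega(n) = o(\sqrt{\log n})$), each $\ell \in B$ factors uniquely as $\ell = pk$ with $y_n < p \leq n$ prime and $1 \leq k \leq n/p$, so $S_B = \sum_{y_n < p \leq n} H_{\lfloor n/p \rfloor}/p$. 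Partial summation using the Mertens estimates $\sum_{p\leq x} 1/p = \log \log x + O(1)$ and $\sum_{p\leq x}(\log p)/p = \log x + O(1)$ gives $S_B = \log n \cdot \log(\log n/\log y_n) - (\log n - \log y_n) + o(1)$, which Taylor-expands to $\omega(n)^2/2$.

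Next I would invoke the Poisson-conditional representation $\vec C \stackrel{d}{=} (X_1,\ldots,X_n) \mid T_n = n$, where $X_\ell \sim \Poi(\alpha/\ell)$ are independent and $T_n := \sum_\ell \ell X_\ell$. A short convolution calculation (shift the summation variable using $(j)_r \P[X_\ell = j] = (\alpha/\ell)^r \P[X_\ell = j - r]$) yields the joint factorial-moment formula
\[
E_{\ESFn}\bigl[(C_\ell)_r (C_{\ell'})_{r'}\bigr] = \Bigl(\tfrac{\alpha}{\ell}\Bigr)^{r} \Bigl(\tfrac{\alpha}{\ell'}\Bigr)^{r'} \frac{\P[T_n = n - r\ell - r'\ell']}{\P[T_n = n]}, \qquad \ell \neq \ell'.
\]
Expanding the joint probability generating function produces the closed form $\P[T_n = m] = e^{-\alpha H_n} \alpha^{(m)}/m!$ for $0 \leq m \leq n$, and Stirling then gives $\P[T_n = n - k]/\P[T_n = n] = (1 - k/n)^{\alpha-1}(1 + o(1))$, bounded above and below by constants $c_\alpha, C_\alpha > 0$ whenever $k \leq (1-\delta)n$. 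Hence $E_{\ESFn}[C_\ell] \asymp_\alpha \alpha/\ell$ uniformly over $\ell \in B \cap [1, n/2]$, and since the contribution of $\ell > n/2$ to $S_B$ is only $O(1)$, we have $E_{\ESFn}[Y] \gtrsim_\alpha S_B$.

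For the variance, a second-order Taylor expansion of $(1-a-b)^{\alpha-1} - (1-a)^{\alpha-1}(1-b)^{\alpha-1} = -(\alpha-1)ab + O((a+b)^3)$ yields $\cov_{\ESFn}[C_\ell, C_{\ell'}] = O_\alpha(\alpha^2/n^2)$ for $\ell, \ell' \in B \cap [1, n/3]$, whose total contribution is $O_\alpha(|B|^2/n^2) = O_\alpha(\omega(n)^2/\log n) = o(S_B^2)$, dominated by the diagonal $\sum_\ell \Var_{\ESFn}[C_\ell] \lesssim_\alpha S_B$. Chebyshev's inequality applied to $Y' := \sum_{\ell \in B \cap [1,n/3]} C_\ell$ (which still satisfies $E[Y'] \gtrsim_\alpha S_B$) gives $\P[Y' = 0] \lesssim_\alpha 1/S_B \sim 1/\omega(n)^2 \to 0$, and a fortiori $\P[Y = 0] \to 0$. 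The main obstacle is the covariance control when $\ell + \ell'$ is close to $n$: for $\alpha < 1$ the ratio $\P[T_n = n - \ell - \ell']/\P[T_n = n]$ blows up and the naive Taylor estimate fails. This is precisely why the truncation to $\ell \leq n/3$ is necessary; the fact that this truncation preserves $S_B$ up to $O(1)$ follows because the defining sum is dominated by primes $p$ near $y_n \ll n$.
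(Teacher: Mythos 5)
Your proposal is correct, and it takes a genuinely different route from the paper. The paper's proof uses the Feller coupling: it writes $\sum_{\ell\in B} C_\ell \geq \sum_{\ell\in B} Y_\ell - D_n$ where the $Y_\ell$ are the independent Poissons from the coupling and $D_n$ is the deletion count, then combines $\E D_n = O(1)$ with the divergence of $\mu_{n,\alpha}=\alpha S_B$ (via the same Mertens computation) to conclude. Your proof instead works directly in the conditional-Poisson representation $\vec C \overset{d}{=}(X_1,\ldots,X_n)\mid T_n = n$, uses the clean closed form $\P[T_n=m]=e^{-\alpha H_n}\alpha^{(m)}/m!$ (valid for $m\leq n$ since $e^{\alpha\sum_{j\geq 1}x^j/j}=(1-x)^{-\alpha}$), derives the joint factorial-moment formula, and then runs a second-moment argument. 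This buys you a self-contained argument that does not invoke the Feller coupling machinery, at the cost of having to control covariances. It also produces the sharper number-theoretic asymptotic $S_B\sim\omega(n)^2/2$; the paper's first-order approximation $\log(1+y)\approx y$ gives only $\omega(n)^2$ because it drops the $-y^2/2$ correction, though of course both establish divergence.

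Two small points of rigor worth tightening. First, your covariance bound $\cov[C_\ell,C_{\ell'}]=O_\alpha(1/n^2)$ is stated from the local Taylor expansion of $g(a,b):=(1-a-b)^{\alpha-1}-(1-a)^{\alpha-1}(1-b)^{\alpha-1}$, but $a,b$ range up to $1/3$; what you actually need is that $g$ vanishes on the axes $a=0$ and $b=0$, so $|g(a,b)|\ll_\alpha ab$ holds on the whole compact box $[0,1/3]^2$, not just near the origin. Second, as written you only quote $R(k):=\P[T_n=n-k]/\P[T_n=n]=(1-k/n)^{\alpha-1}(1+o(1))$, which gives $\cov[C_\ell,C_{\ell'}]=O(1/n^2)+o(1/(\ell\ell'))$ rather than a clean $O(1/n^2)$; fortunately the extra $o(1/(\ell\ell'))$ term sums to $o(S_B^2)$ over pairs, so the argument still closes (one could instead push Stirling one more order to get $R(k)=(1-k/n)^{\alpha-1}(1+O(k/n^2))$ and recover the cleaner $O(1/n^2)$). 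Neither issue is a gap — the claim and the proof strategy are sound — but they are places a referee would ask you to spell out.
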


The proof of \thref{thm:pyber} uses both \thref{thm:bovey} and \thref{claim:claim2}.
For all three generalizations we follow a somewhat similar blueprint to their predecessors. However, the previous work often uses special features of uniformly random permutations. 
Counting arguments are heavily employed. In general, these techniques do not translate to $\ESFn$ permutations. Our approach is to use the Feller coupling (see Section \ref{sec:feller}) to directly relate the cycle structure to independent Poisson random variables. In some places this yields more elegant proofs, in others it becomes a bit technical. Keeping in mind that $\ESF(1,n)$ is the uniform measure, this is a nice high-level approach to extending these results.  

\subsection{Overview of proof and result for sumsets} \label{sec:overview}



 
Because a fixed set's size is a sum of cycle lengths of a permutation, invariable generation is related to sumsets formed from random multisets.
The link is developed over a collaborative arc of Arratia, Barbour and  Tavar\'e,  \cite{arratia1992, arratia2000, arratia2013, arratia2016}.  They study the relationship between $\alpha$-logarithmic structures and Poisson random variables. Most relevant for our purposes are the descriptions of the cycle counts $\vec C = (C_1,\hdots,C_n)$ for permutations sampled according to $\mu_n$.

 They prove in \cite[Theorem 3.1, Theorem 3.2]{arratia2000} that, for permutations induced by the strong $\alpha$-logarithmic property, the small cycle counts evolve to be independent Poisson random variables, and the large cycle counts follow Ewens sampling formula. Furthermore, \cite[Lemma 1]{arratia1992} (also discussed in \cite{arratia2016}) shows that the Ewens sampling formula can be cleanly related via the Feller coupling to independent Poi($\alpha/k$) random variables. The payoff is that we can model the sizes of fixed sets in a permutation with independent Poisson random variables.

Let $\mathbf X(\alpha) = (X_1, X_2, \hdots)$ be a sequence of Poisson random variables where $X_j$ has mean $\alpha/j$. The coupling discussed in the previous paragraph allows us to model the sizes of fixed sets in a permutation
  with the random sumset 
 $$\L (\mathbf X(\alpha) ) = \left\{ \sum_{ j \geq 1}  j x_j \colon 0 \leq x_j \leq X_j \right\}.$$
 
Returning to our opening question, we can bound the required number of permutations from above if
no fixed set size is common to all permutations. 
And we can bound the number from below if there is guaranteed to be a common fixed set size. 
Consider independent realizations $\X^{(1)}(\alpha),\ldots,\X^{(m)}(\alpha)$. 
The upper and lower bounds correspond to finding extremal values of $m$ such that
\begin{enumerate}[label = (\roman*)]
  \item $\bigcap_{i=1}^m\L(\mathbf X^{(i)}(\alpha)) = \{0\}  
  \text{ with positive probability}\label{eq:finite}$, and \vspace{.2 cm}
  
  \item $|\bigcap_{i=1}^m\L(\mathbf X^{(i)}(\alpha))| = \infty$ almost surely.$\label{eq:infinite}$
\end{enumerate}
Since common elements among independent sumsets correspond to common fixed-set sizes of independent permutations, invariable generation is analogous to a trivial sumset intersection. On the other hand, an infinite intersection means that common fixed-sets persist among the permutations.
For the case $\alpha =1$, the results of \cite[Theorem 1.6]{ppr} and \cite[Corollary 2.5]{efg} each imply that four sets suffice in \ref{eq:finite}. That \ref{eq:infinite} holds with three sets is due to \cite[Corollary 3.9]{efg}. Here is the analogue for general $\alpha>0$.

\begin{thm} \thlabel{thm:main}
Let $s_\alpha := \inf \{ m \colon \P[ \cap _1^m \L(\mathbf X^{(i)}(\alpha) )  = \{0\} ] >0 \}$
 be the smallest number of i.i.d.\ sumset intersections
so that the resulting set is trivial with positive probability. Let $h(\alpha)$ be as in \eqref{eq:h}, and $m_\alpha$ as in \thref{thm:ig}. At points where $h$ is continuous we have $s_\alpha = h(\alpha),$
and at the discontinuities $h(\alpha) \leq s_{\alpha} \leq h(\alpha)+1.$

\end{thm}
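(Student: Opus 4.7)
The proof turns on the critical exponent $\delta_\alpha := 1 - \alpha \log 2$ and a sharp dyadic estimate of the form
\[
  \E\bigl[\,|\L(\X(\alpha)) \cap [N, 2N]|\,\bigr] \asymp N^{\alpha \log 2},
\]
equivalently $\P[k \in \L(\X(\alpha))] \asymp k^{-\delta_\alpha}$ averaged over $k \in [N,2N]$. The heuristic is transparent: the number of Poisson atoms $j \leq 2N$ with $X_j \geq 1$ is approximately $\Poi(\alpha \log(2N))$, so the generator multiset has $\approx \alpha \log(2N)$ elements and its subset sums occupy at most $2^{\alpha \log(2N)} = (2N)^{\alpha \log 2}$ values. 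The plan is to upgrade this heuristic to a two-sided asymptotic via a sieve/divisor-sum argument, adapting Erd\H{o}s's treatment of uniform permutations to the $\alpha$-weighted Poisson model.

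With the one-scale estimate in hand, the lower bound $s_\alpha \geq h(\alpha)$ follows from a second-moment argument. Fix $m < (1 - \alpha \log 2)^{-1}$, so $m \delta_\alpha < 1$. Independence across the $m$ copies gives
\[
  \E\Bigl[\,\Bigl|\bigcap_{i=1}^m \L(\X^{(i)}(\alpha)) \cap [N, 2N]\Bigr|\,\Bigr] \asymp N^{1 - m \delta_\alpha} \to \infty,
\]
and a matching second moment, using decorrelation of the events $\{k \in \L^{(i)}\}$ and $\{k' \in \L^{(i)}\}$ for well-separated $k,k'$ (inherited from Poisson independence across disjoint scales), yields by Chebyshev that the intersection has size tending to infinity in probability. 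Hence $\P[\bigcap_i \L(\X^{(i)}(\alpha)) = \{0\}] = 0$.

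For the upper bound, fix $m > (1 - \alpha \log 2)^{-1}$ so $m \delta_\alpha > 1$. The first moment gives $\E[|\bigcap_i \L(\X^{(i)}(\alpha)) \cap [M,\infty)|] \lesssim M^{1 - m \delta_\alpha} \to 0$, so once $M$ is large the tail of the intersection is empty with probability at least $1/2$. Independently, the event $E_M := \{X_j^{(i)} = 0 \text{ for all } j \leq M,\ i \leq m\}$ has strictly positive probability $e^{-m \alpha H_M}$, and on $E_M$ each $\L(\X^{(i)})$ is empty on $[1, M]$. Since $E_M$ is measurable with respect to the small-scale coordinates and independent of the tail, conjoining yields $\P[\bigcap_i \L(\X^{(i)}(\alpha)) = \{0\}] > 0$. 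At continuity points of $h$ one has $h(\alpha) > (1-\alpha \log 2)^{-1}$, so $m = h(\alpha)$ already satisfies the strict inequality and $s_\alpha = h(\alpha)$; at a discontinuity $(1-\alpha \log 2)^{-1}$ is itself the integer $h(\alpha)$, and the strict inequality holds only at $m = h(\alpha)+1$, producing the bracketed bound.

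The main obstacle is the lower half of the one-scale estimate: showing that the $\approx \alpha \log N$ Poisson generators produce $\gtrsim N^{\alpha \log 2}$ \emph{distinct} subset sums in $[N, 2N]$ for typical realizations. The upper side is an easy subset-sum count, but the lower side requires an anticoncentration argument controlling coincidences among random subset sums at the correct exponent, rather than merely up to polylogarithmic losses. A secondary technical point is ensuring the second moment survives short-range clustering of $\L$-elements near the Poisson atoms themselves, which is what forces the averaged (dyadic-block) rather than pointwise formulation of the key estimate and parallels the harder side of the uniform analyses of \cite{ppr,efg}.
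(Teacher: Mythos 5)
Your proposal identifies the correct critical exponent $1-\alpha\log 2$ and the correct informal picture, but both halves of the argument as written have genuine gaps that the paper's machinery is specifically designed to close.

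For the upper bound, the one-scale estimate $\E[\,|\L(\X(\alpha)) \cap [N,2N]|\,] \asymp N^{\alpha\log 2}$ is false as a two-sided bound on the \emph{unconditional} expectation, and with it the first-moment/Borel--Cantelli argument collapses. The ``heuristic'' count $2^{f_{2N}}\approx (2N)^{\alpha\log 2}$ uses $f_{2N}\approx\alpha\log(2N)$, but $f_{2N}$ is Poisson and $\E[2^{f_{2N}}]=e^{(2-1)\cdot\alpha\log(2N)+O(1)}\asymp N^{\alpha}$, which for every $\alpha\in(0,1/\log 2)$ is polynomially larger than $N^{\alpha\log 2}$. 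The rare events where $f_{\lambda_k}$ overshoots its mean dominate the expectation; this is exactly the ``lottery effect'' the paper flags. The paper circumvents it by working with the quenched probability $\tilde p_k = \P[T<\lambda_k\text{ and }k\in\L(\X)]$, where $T$ is the a.s.\ finite stopping time after which $f_k$ and $g_k$ stay near their means (Lemma \ref{lem:quenched}). The Borel--Cantelli step is then applied to the events $\{T^*<\lambda_k,\ k\in\cap_i\L(\X^{(i)})\}$, and the unquenched conclusion follows only because $T^*$ is a.s.\ finite. Without the quenching, $\sum_k p_k^m$ need not converge even when $m\delta_\alpha>1$, and your ``tail is empty with probability $\geq 1/2$'' step does not go through.

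For the lower bound, the claimed ``decorrelation of $\{k\in\L^{(i)}\}$ and $\{k'\in\L^{(i)}\}$ for well-separated $k,k'$ $\ldots$ inherited from Poisson independence across disjoint scales'' is not available. Two targets $k,k'\in[N,2N]$ are both reached using the same small-scale atoms, and whether each is hit is positively correlated through the common randomness of $f_{\lesssim N}$: when $\X$ has more atoms, both events become more likely. A naive second-moment argument over $k$ therefore does not yield $\E[Z_N^2]\lesssim(\E Z_N)^2$. The paper's route (following \cite{efg}, and through Lemmas \ref{lem:Green3.2}--\ref{lem:Green3.5}, Corollary \ref{cor:Green3.6}, and Propositions \ref{prop:Green3.7}--\ref{prop:Green3.8}) is fundamentally different: it lower-bounds the size of the \emph{difference set} $S(I;\X^{(1)},\ldots,\X^{(m)})$ via a Fourier $L^2$ bound on $F(\bm\theta)$, then separately uses the atoms at a larger, independent scale $(ck, Dck]$ to find a coincidence. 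The independence is between scales, not between targets, and the Fourier step is precisely the anticoncentration input you flag as ``the main obstacle'' --- so you've correctly located the hard part, but the second-moment plan would not deliver it. Your dyadic-block Borel--Cantelli at the end matches the paper's Corollary \ref{cor:Green3.9}, and your bracketing of $s_\alpha$ at discontinuity points via a monotonicity coupling is the same as Proposition \ref{prop:lower}'s final step, so the skeleton is sound; it is the two moment estimates in the middle that need to be replaced by the quenched bound and the Fourier argument respectively.
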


To establish this result for sumsets, we build upon the ideas in \cite{ppr} and \cite{efg} to prove matching lower and upper bounds for $s_\alpha$. These bounds can be found in \thref{prop:upper} and \thref{prop:lower}. Parts of the argument are as simple as swapping 1's for $\alpha$'s, but in others, particularly the lower bound, some care is required. 

A relevant quantity for establishing the upper bound  is
$p_k = p_k(\alpha)$, the probability the element $n$ appears in $\L(\X)$.
Estimating $p_k(1)$ is the major probabilistic hurdle in \cite{ppr}. There are two difficulties. The first is called a lottery effect; certain unlikely events greatly skew $p_k$. This is circumvented by using \emph{quenched} probabilities $\tilde p_k$, which ignore an $o(1)$ portion of the probability space (that $\sum X_j$ and $\sum j X_j$ are uncharacteristically large). With this restriction, it is shown in \cite[Lemma 2.3]{ppr} that $\tilde p_k(1) \leq k^{\log (2) -1}$. Notice that $4 (\log 2 -1) < -1$ and a Borel-Cantelli type argument implies finiteness of the intersection.
 Establishing the quenched formula is the second hurdle. It requires a clever counting and partitioning of $\L(\X)$. 
 We generalize these ideas in \thref{lem:quenched} to obtain the upper bound $$\tilde p_k(\alpha) \leq C_\epsilon k^{- 1 + \alpha \log 2+ \epsilon }$$ for any $\epsilon>0$. 
Taking into account the lottery effect, and ignoring a vanishing portion of the probability space, we can write $\P[k \in \cap_1^m \L(\X^{(i)})] = \prod_1^m \tilde p_k$.
When $m \geq \left\lceil( 1- \alpha \log 2 + \epsilon)^{-1}  \right\rceil$, the product is smaller than $n^{-1}$,
and the probabilities are summable. 
With this, the Borel-Cantelli lemma implies the intersection is almost surely finite, 
and thus it is trivial with positive probability. 

The lower bound in \thref{prop:lower} requires more care. We generalize the framework in \cite[Section 3]{efg} to higher dimensions to incorporate more than three sumsets as well as different values of $\alpha$. Given $I \subseteq \mathbb N$ and $m$ sumsets we consider the set of differences
$$ S(I;\X^{(1)},\ldots,\X^{(m)}):=\{ (n_1 - n_m, n_2 - n_m, \hdots, n_{m-1} - n_m)\colon n_i \in \L(\X^{(i)}) \cap I \}.$$
We use Fourier analysis (as in \cite{efg} and \cite{MT}) to study a smoothed version of the indicator function of this set. In particular, we will work with its Fourier transform $F\colon \m T^{m-1}\to \m C$. After obtaining a pointwise bound on $F$, we integrate around that torus to obtain, 
for sufficiently large $k$ and $I = (k^{1-\beta}, k]$, that $|S(I, M_1,\hdots, M_m)| \gg k^{m-1}$  and lies in the cube $[-ck,ck]^{m-1}$ for some $c>0$. 
The density of these sets is sufficiently high that if we look in a larger interval, with positive probability we can find $j_1, \ldots, j_m \in (k, Dk]$ such that $X_{j_1}^{(1)}, \ldots , X_{j_m}^{(m)} >0$ and
\[
	(j_m - j_1, \ldots, j_m - j_{m-1}) \in S(I; \X^{(1)} , \ldots, \X^{(m)}).
\]
Combining this with the corresponding point in the set of differences then produces a number in $\L(\X^{(1)}) \cap \cdots \cap \L(X^{(m)})$ made solely from values in $(k^{1-\beta}, Dk]$ for some constants $\beta,D >0$. 
By partitioning $\mathbb{N}$ into infinitely many disjoint sets of this form and repeating this argument for each of them, we conclude that the set of intersections is almost surely infinite.

\subsection{Further questions}
There is still much to be done for both components of this paper---random permutations and random sumsets. Recall that \thref{thm:ig} and \thref{thm:main} do not pin down the exact behavior at points of discontinuity of $h$. A glaring question is to characterize $m_{\alpha}$ and $s_{\alpha}$ at these points. We are doubtful our approach generalizes. Characterizing the behavior at the critical values is likely difficult, and will require a new idea. 

\begin{question} \thlabel{q:critical}
What are $m_{\alpha}$ and $s_{\alpha}$ at discontinuity points of $h$?	
\end{question}

For random permutations we would like a more complete characterization than in \thref{cor:lb}. In particular an upper bound. 

\begin{question}
Show that if a permutations has cycle counts that are asymptotically dominated by Poisson random variables with mean $h\inv(2)/k$, then two will invariably generate $S_n$ with positive probability.
\end{question}

\thref{cor:lb} shows that for many permutation measures no finite number will invariably generate $S_n$. Possibly if we let the number of permutations grow with $n$ we will see different behavior.

\begin{question}
	
Let $\alpha > 1 / \log 2$. Prove that there are 
constants $\beta< \beta'$ such that the probability $\beta \log n$  Ewens-$\alpha$ permutations generate $S_n$ is bounded away from 0, while $\beta' \log n$ fail to generate $S_n$ with high probability?
\end{question}

We believe that $\log n$ is the correct order by the following approximation model. When $\alpha > 1 / \log 2$ the fixed set sizes in a random permutation are dense in $[n]$. So, we can model the occurrence of each fixed size by independent Bernoulli random variables with some parameter $p_\alpha >0$. If one performs $N$ independent thinnings of $[n]$ by including each integer $k$ with probability $p_\alpha$, then the probability $k$ belongs to all $N$ subsets is $p_\alpha^N$. Hence, the probability there are no common elements in the $N$-fold intersection is $(1-p_{\alpha}^{N})^n.$ If we choose $N = c \log n$ this will converge to either 0 or a positive number as we increase $c$. The difficulty in answering the question for permutations is we do not have independence for fixing different fixed set sizes. 

We also have a question in the simpler setting of generating $S_n$. Dixon's theorem \cite{dixon} implies that two uniformly random permutations will generate either $A_n$ or $S_n$ with high probability. Does the analogue hold for $\ESFn$ permutations? Because generation relies on more than just cycle structure, this may be a hard question.

\begin{question}
	How many $\ESFn$ permutations are needed to generate one of $A_n$ or $S_n$ with high probability?
\end{question}

\subsection{Notation} \label{sec:notation}
Note that $\alpha>0$ is fixed throughout. As already discussed, we denote cycle counts of a permutation by $\vec C = (C_1,\hdots, C_n)$ with $C_j$ the number of $j$-cycles. Unless otherwise noted, our permutations and cycle vectors come from  an $\ESFn$ distribution. We will let $\vec Y(\alpha)$ denote a vector of independent $\Poi(\alpha/k)$ random variables that are obtained from the Feller coupling in \eqref{eq:feller}. We will also take $\X(\alpha)$ to be a vector of independent $\Poi(\alpha/k)$ random variables. This is done to distinguish the permutation and random sumset settings.  Henceforth we will suppress the $\alpha$ dependence unless there is reason to call attention to it. For an infinite vector, such as $\Y$, we will use the notation $\Y[i,j]$ to denote the sub-vector $(Y_i, Y_{i+1}, \hdots, Y_j)$.  

For $I$ an interval in $\m N$ define the sumset
\[
  \L(I,\X):=\left\{\sum_{j\in I}jx_j\colon 0\leq x_j\leq X_j\right\},
\]
and set $\L(\X):=\L(\m N,\X)$. Given $\X$ define 
\begin{align}f_{\ell,k}(\X)  = \sum_{ \ell < j \leq k } X_j,\qquad g_{\ell,k}(\X) = \sum_{\ell < j \leq k } j X_j. \label{eq:def}
	\end{align}
It will be convenient to abbreviate $f_k := f_{1,k}(\X)$ and $g_k := g_{1,k}(\X)$. Thus, $f_k$ is the number of available summands smaller than $k$, and $g_k$ is the largest sum attainable with them. 


We will use the notation $f\ll g$ and $f(n) = O(g(n))$ interchangeably for the existence of $c$ such that for all large enough $n$ it holds that $f(n) \leq c g(n)$. When the $c$ depends on our choice of $\epsilon$ we will use $f \ll_\epsilon g$, similarly $f \ll_{m,\epsilon}$ denotes dependence of the constant on both $\epsilon$ and the number of intersections $m$. We use the little-$o$ notation $f(n) = o(g(n))$ to mean that $f(n)/g(n) \to 0$. 
Also, $f(n) = \tilde O (g(n))$ means there is $k>0$ such that $f(n) = O( \log ^k(n) g(n)).$
The symbol $f(n) \approx g(n)$ means that $f(n)/g(n) \to c$ for some $c>0$. A sequence of events $E_n$ occurs \emph{with high probability} if $\liminf \P[E_n] =1$. Also $i \mid j$ means that $i$ divides $j$ (i.e.\ there is an integer $k$ with $ik =j$.)

\subsection{Outline of paper}
We will lead off with a proof of \thref{thm:ig}. This comes in two parts: an upper bound from \thref{thm:main}, and a matching lower bound in \thref{thm:main2}. The upper bound in \thref{thm:main} has two distinct components.  We connect random sumsets to $\ESFn$ permutations in Section \ref{sec:feller}. This section includes the proofs of \thref{thm:pyber}, \thref{thm:bovey}, and \thref{claim:claim2}.
The second component is analyzing the random sumset model. We do this in Section \ref{sec:upper_bound} and provide an upper bound on $s_\alpha$ at \thref{prop:upper}.
The lower bound is \thref{thm:main2} and proven in Section \ref{sec:lower_bound}.

\subsection{Acknowledgements} 

We would like to thank Robin Pemantle for several helpful conversations. We are very grateful for Richard Arratia's correspondence. He made us aware of the indispensable Feller coupling. Also, thanks to Clayton Barnes, Emily Dinan, and Jacob Richey for their assistance in the formative stages of this project.

\section{Proof of \thref{thm:ig}} \label{sec:proof}

First we show that $h(\alpha)+1$ is an upper bound for the number of $\ESFn$ permutations to invariably generate $S_n$.

\begin{prop} \thlabel{ESF}
Suppose $\alpha$ is a continuity point of $h$. A collection of $h(\alpha)$ independent permutations sampled according to an $\ESFn$ invariably generates $S_n$ with positive probability as $n\to \infty.$ 	
\end{prop}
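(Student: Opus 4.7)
The strategy follows the blueprint of \cite{ppr,efg}, adapted to the Ewens setting via the tools developed earlier in the paper. Set $m := h(\alpha)$ and let $\pi_1, \ldots, \pi_m$ be i.i.d.\ $\ESFn$ permutations. The plan is to write the event that they invariably generate $S_n$ as the intersection of three events and to bound each of them in turn.

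The key reduction is that the conjugated tuple $\{\sigma_i\pi_i\sigma_i^{-1}\}_{i=1}^m$ preserves a common proper non-empty subset of $[n]$ for some choice of $\sigma_i$ if and only if some $k \in \{1,\ldots,n-1\}$ lies in every $\L(\vec C(\pi_i))$, since fixed-set sizes of $\pi_i$ are precisely sub-sums of its cycle lengths. Hence, on
\[
  E_{\mathrm{trans}} := \Bigl\{ \textstyle\bigcap_{i=1}^m \L(\vec C(\pi_i)) \cap \{1,\ldots,n-1\} = \emptyset \Bigr\},
\]
the $\pi_i$'s invariably generate a transitive subgroup; on the further event that each $\pi_i$ lies in no transitive subgroup other than $A_n$ or $S_n$ (from \thref{thm:pyber} with a union bound over $i$) this subgroup is $A_n$ or $S_n$; and on the event $E_{\mathrm{odd}}$ that at least one $\pi_i$ is odd, it must equal $S_n$.

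For $E_{\mathrm{trans}}$, I would invoke the Feller coupling \eqref{eq:feller} from Section \ref{sec:feller} to couple each $\vec C(\pi_i)$ with a vector $\Y^{(i)}$ of independent $\Poi(\alpha/k)$ random variables, so that up to a coupling error of $o(1)$ the event $E_{\mathrm{trans}}$ reduces to $\bigcap_i \L(\Y^{(i)}) = \{0\}$. This Poisson sumset event has positive probability for $m = h(\alpha)$ by \thref{prop:upper} (the upper bound half of \thref{thm:main}). \thref{thm:pyber} then contributes only a $o(1)$ failure probability, and $\P[E_{\mathrm{odd}}]$ is bounded below uniformly in $n$ because each $\pi_i$ is independently odd with probability bounded below: the parity of the number of even-length cycles $\sum_k C_{2k}(\pi_i)$ is asymptotically close to uniform under the Feller coupling, as should follow from \thref{lem:odd}.

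The main obstacle is that $E_{\mathrm{trans}}$ and $E_{\mathrm{odd}}$ are both measurable with respect to the cycle counts and are thus not independent. I plan to handle this by exploiting that $E_{\mathrm{odd}}$ depends only on $\sum_k C_{2k}(\pi_i) \bmod 2$, which under the Feller coupling becomes the parity of a sum of independent Bernoullis whose total has unbounded mean. A conditioning argument on this parity data (or an FKG-type correlation inequality, using that $E_{\mathrm{trans}}$ is a decreasing function of the cycle counts while oddness is governed by a nearly uniform Bernoulli factor) then yields $\P[E_{\mathrm{trans}} \cap E_{\mathrm{odd}}] \geq c > 0$ uniformly in $n$, which combined with the $o(1)$ Pyber failure probability completes the proof.
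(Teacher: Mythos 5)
Your outline identifies the right ingredients (sumset reduction, Feller coupling, \thref{thm:pyber}, oddness), but the central coupling step is not justified and would not go through as stated. The claim that ``up to a coupling error of $o(1)$ the event $E_{\mathrm{trans}}$ reduces to $\bigcap_i \L(\Y^{(i)}) = \{0\}$'' is false: the Feller coupling gives only $C_\ell \leq Y_\ell + \ind{J_n = \ell}$ with a nontrivial insertion of typical size $\Theta(n)$ and a random number of deletions, and the total-variation approximation of cycle counts by independent Poissons holds only on initial segments $\C[1,b_n]$ with $b_n = o(n)$. The permutation sumset $\L(\vec C(\pi))$ is supported on $\{0,\ldots,n\}$ and is generated by a constrained finite vector, while $\L(\Y^{(i)})$ is unbounded; there is no $o(1)$-TV correspondence between the two full events. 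In particular you have no mechanism to control common fixed-set sizes in the range $b_n < \ell \leq n/2$, which is exactly where the insertion from $J_n$ lives, and which is the hard part of the estimate.

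The paper avoids this by working directly in the permutation model with an asymmetric two-event decomposition: it bounds $\P[E_{n,k_0}]$ (all $m$ permutations fix a common set of size in $[k_0, n/2]$) by using \thref{lem:pik} per permutation and a union bound over $k$ — the exponent $\delta m > 1$ makes the tail arbitrarily small for $k_0$ large — and it handles sizes $< k_0$ by the one-sided trick of restricting to the event $F_{n,k_0}$ that $\pi^{(2)}$ alone has no cycles of length $\leq k_0$, which has $\liminf$ positive probability by the total-variation convergence on $[1,k_0]$. Note that \thref{lem:pik} is itself where the Poisson sumset bound \thref{lem:quenched} (i.e.\ the content of \thref{prop:upper}) enters, translated to $\ESFn$ via the Feller coupling in a form that carefully accounts for the $J_n$ insertion; you cannot skip that translation by citing \thref{prop:upper} directly.

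Your FKG-or-conditioning plan for the dependency between $E_{\mathrm{trans}}$ and $E_{\mathrm{odd}}$ is also not needed and would require a genuinely new argument: FKG does not apply directly because parity is not a monotone function of cycle counts, and the ``nearly uniform Bernoulli factor'' claim would have to be proved. The paper sidesteps this entirely by conditioning $\pi^{(1)}$ to be odd from the outset and then applying \thref{cor:odd}, which bounds the conditional fixed-set probability by the unconditional one (from \thref{lem:pik}) times an $\alpha$-dependent constant, so the conditioning is absorbed into the same union bound with no correlation analysis required.
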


\begin{proof}
Let $m = h(\alpha)$.
Call the permutations $\pi^{(1)},\hdots,\pi^{{(m)}}$. 
Condition $\pi^{(1)}$ to be odd. \thref{lem:odd} guarantees this happens with probability bounded away from zero. Define the events
\begin{align*}
	E_{n,k} &= \{\text{$\pi^{(1)},\hdots,\pi^{(m)}$ fix an $\ell$-set for some $k \leq \ell \leq n/2$} \},\\
	F_{n,k} &= \{\text{$\pi^{(2)}$ has no $\ell$-cycles with $\ell \leq k$}\}.
\end{align*}
The probability of a common fixed set size amongst all of the permutations is bounded above by
\begin{align}\P[E_{n,k} \cup F_{n,k}^c] \leq \P[E_{n,k}] + \P[F_{n,k}^c].\label{eq:key_bound}
	\end{align}
Therefore, the $\pi^{(i)}$ will invariably generate a transitive subgroup of $S_n$ if this quantity is less than 1.
%
%

Now $\alpha$ is a continuity point of $h$, so 
\[
	m = h(\alpha) = \lceil (1 - \alpha \log 2)^{-1} ) \rceil > (1 - \alpha \log 2)^{-1}.
\]
It follows that there exists some $\epsilon > 0$ such that 
\[
	m = (1 - (\alpha + \epsilon) \log 2)^{-1}.
\]
Fix $\epsilon'$ such that $0 < \epsilon ' < \epsilon$.
By \thref{lem:pik} for any $k$ and $i = 2, \ldots, m$,
\[
 	\P [ \pi^{(i)} \text{ fixes a $k$-set}] \leq c k^{-\delta},
\] where
\[
	\delta = 1 - (\alpha + \epsilon ') \log 2.
\]
Similarly, by \thref{cor:odd} for any $k$,
\[
	\P [ \pi^{(1)} \text{ fixes a $k$-set}] \leq c k^{-\delta} \cdot \max\left\{\f {\alpha +1}{\alpha}, \f {\alpha+1}{1} \right\}.
\] 
By independence of the $\pi^{(i)}$, we conclude 
$$\P[E_{n,k}] \leq (ck^{-\delta})^{m} \cdot \max\left\{\f {\alpha +1}{\alpha}, \f {\alpha+1}{1} \right\}.$$ 
By construction we have that $\delta m > 1$.
A union bound for $k_0 \leq k \leq n/2$ ensures that, given some $\beta>0$, we can choose $k_0$ such that $\P[E_{n,k_0} ] < \beta$.
Moreover, \cite[Theorem 3.1]{arratia2000} ensures that the cycle counts $\C[1,k_0]$ of $\pi^{(2)}_n$ converge in total variation to $\Y[1,k_0]$. Hence, $$\liminf_{n \to \infty} \P[F_{n,k_0}]>0.$$ Set $\beta$ equal to half this limit infimum. Now, for an appropriate $k_0$ and large enough $n$, it holds that \eqref{eq:key_bound} is strictly less than $1$. 
 %
 This means that with positive probability the collection invariably generates a transitive subgroup of $S_n$. 
  By \thref{thm:pyber} we know this transitive subgroup is with high probability either $A_n$ or $S_n$. Because $\pi^{(1)}$ is odd, it must be $S_n$.
\end{proof}

\begin{remark} \thlabel{rmk:mono}
The argument above implies that $m_\alpha \leq m_{\alpha+\eta}$ for all $\eta >0$, since any appropriate choice of $\epsilon'$ in the latter case will hold for the former. It follows that $h(\alpha)+1$ permutations will invariably generate with positive probability at discontinuity points of $h$.
\end{remark} 
Now we prove a matching lower bound.

\begin{prop} \thlabel{thm:main2}
A collection of $m < h(\alpha)$ permutations sampled independently according to an $ESF(\alpha,n)$ with high probability do not invariably generate $S_n$, or any transitive subgroup of $S_n$.
\end{prop}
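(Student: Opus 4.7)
The plan is to transfer the lower bound $s_\alpha \geq h(\alpha)$ for the Poisson sumset model (the content of \thref{prop:lower}) back to the permutation setting using the Feller coupling developed in Section \ref{sec:feller}. The starting observation is that if $\pi^{(1)},\ldots,\pi^{(m)}$ share a common fixed-set size $\ell \in [1,n-1]$, i.e.\ each has a subset of $[n]$ of size $\ell$ on which it restricts to a permutation, then by choosing conjugates we may arrange that they all fix the same set, so the invariably generated subgroup lies in $S_\ell\times S_{n-\ell}$, which is intransitive. Hence it suffices to exhibit, with probability $1-o(1)$, a common fixed-set size $\ell$ that is expressible as $\sum_j j\, c_j^{(i)}$ with $0\le c_j^{(i)}\le C_j^{(i)}$ for every $i$.

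First I would invoke the Feller coupling to realize each $\pi^{(i)}$ jointly with an independent Poisson vector $\Y^{(i)}$, $Y_j^{(i)}\sim\Poi(\alpha/j)$, in such a way that the cycle count vector $\C^{(i)}$ agrees with $\Y^{(i)}$ on the range $[1,k_0(n)]$ with probability $1-o(1)$, for any prescribed $k_0(n)=o(n)$. Under this coupling, any common element of the truncated sumsets $\L([1,k_0(n)],\Y^{(i)})$ is automatically a common fixed-set size of the $\pi^{(i)}$; since $k_0(n)=o(n)$ the resulting $\ell$ is less than $n$ and thus obstructs transitivity, as above.

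For the remaining step I would apply the mechanism built in the proof of \thref{prop:lower}: there exist constants $\beta,D>0$ (depending only on $\alpha$ and on $m<h(\alpha)$) such that for every sufficiently large $k$, with probability bounded below by a positive constant, $\bigcap_{i=1}^m\L((k^{1-\beta},Dk],\X^{(i)})$ is nonempty. Partitioning $[1,k_0(n)]$ into $\omega(1)$ disjoint blocks of this form---each block depending only on the Poisson variables with indices in that block, hence mutually independent---a simple complementary-probability calculation boosts the positive per-block probability to probability $1-o(1)$ that at least one block produces a common element. Chaining this with the Feller coupling from the previous paragraph yields a common fixed-set size of $\pi^{(1)},\ldots,\pi^{(m)}$ with probability $1-o(1)$, and the conclusion follows.

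The main obstacle I anticipate is calibrating the scale $k_0(n)$: it must grow fast enough that the $(k^{1-\beta},Dk]$ blocks fit an unbounded number of times inside $[1,k_0(n)]$, yet slowly enough that the total-variation bound in the Feller coupling on $[1,k_0(n)]$ remains $o(1)$. A choice such as $k_0(n)=n^{1/4}$ comfortably accommodates both requirements; the verification that is likely to be least routine is confirming that the block-wise independence exploited in the union bound survives the coupling (since the Feller construction is not a product coupling), but this is handled by the total-variation approximation on the indices $[1,k_0(n)]$ provided by \cite[Lemma 1]{arratia1992}.
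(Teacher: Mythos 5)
Your proposal is correct and rests on the same two pillars the paper uses: (1) transfer from the permutation setting to the Poisson sumset model via a total-variation/Feller approximation of the small cycle counts, and (2) the density-of-intersections machinery behind \thref{prop:lower}. The difference is in calibration. The paper fixes $\epsilon>0$ first, then uses the qualitative conclusion of \thref{prop:lower} (that $\bigcap_i \L(\X^{(i)})$ is almost surely infinite) to choose a \emph{fixed} cutoff $k_0=k_0(\epsilon)$---independent of $n$---so that the truncated intersection $\bigcap_i \L(\X^{(i)})\cap[1,k_0]$ is nonempty with probability $\geq 1-\epsilon/2$; it then lets $n\to\infty$ with $k_0$ held fixed and invokes \cite[Theorem~3.1]{arratia2000} to make $\C^{(i)}[1,k_0]$ match $\X^{(i)}[1,k_0]$ with probability $\geq 1-\epsilon/2$, giving failure probability at most $\epsilon$ since $\epsilon$ is arbitrary. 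You instead take a growing cutoff $k_0(n)=o(n)$ and re-run the $(k^{1-\beta},Dk]$ block argument (essentially re-deriving \thref{cor:Green3.9} quantitatively), which works but incurs the calibration question you flag and adds a step of checking that the block-independence survives the coupling on $[1,k_0(n)]$. Nothing is wrong with your route, but the paper's fixed-$k_0$, epsilon-first structure sidesteps both concerns: with $k_0$ not depending on $n$, the total-variation step is immediate and there is no need to chase rates. One small correction to your citation bookkeeping: the truncated total-variation convergence for a growing window $b_n=o(n)$ comes from \cite[Theorem~3.1]{arratia2000}, while \cite[Lemma~1]{arratia1992} supplies the Feller coupling that underlies it; either gives what you need, but the two should not be conflated.
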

\begin{proof}
Let $m< h(\alpha)$, and fix some $\epsilon > 0$.
By \thref{prop:lower} the intersection of the i.i.d.\ sumsets with parameter $\alpha$, $\bigcap_{i = 1}^m \L (\X^{(i)} )$, is almost surely infinite.
It follows that there is some $k_0 = k_0(\epsilon)$ such that if $k \geq k_0$ then $\bigcap_{i = 1}^m \L (\X^{(i)} ) \cap [1,k] \neq \emptyset$ with probability at least $1 - \epsilon /2$.

Fix some $k \geq k_0$ and call the random permutations $\pi^{(1)},\hdots,\pi^{{(m)}}$. 
By \cite[Theorem 3.1]{arratia2000} the cycle counts $\C^{(i)} [1,k]$ of $\pi^{(i)}$ converge in total variation to $\X^{(i)} \cap [1, k]$.
Then there exists some $n_0 = n_0(\epsilon , k)$ such that if $n \geq n_0$, with probability greater than $1 - \epsilon / 2$ we have $\C^{(i)} [1, k] = \X^{(i)} \cap [1, k]$ for all $i = 1, \ldots, m$.
Since 
\[
	\bigcap_{i =1}^m \L (\X^{(i)} ) \cap [1,k] \subset \bigcap_{i=1}^m \L \left(\X^{(i)} \cap [1,k] \right),
\]
with probability at least $1 - \epsilon$ we have that $\pi^{(1)}, \ldots, \pi^{(m)}$ each fix a set of size $\ell$ for some $\ell \leq k$.
As $\epsilon$ was arbitrary, this completes our proof.
\end{proof}

\thref{thm:ig} is a straightforward consequence of the last two propositions.

\begin{proof}[Proof of \thref{thm:ig}]
Together \cite[Theorem 3.1, Theorem 3.2]{arratia2000} guarantee that  the cycle counts of any measure satisfying the \salp~ converge in total variation to the same distribution as an $\ESFn$ permutation. Hence it suffices to consider $\ESFn$ permutations.

By \thref{ESF} we have $m_\alpha \leq h(\alpha)$ at point of continuity of $h$. 
At points of discontinuity, we use \thref{rmk:mono} to deduce that $m_\alpha \leq m_{\alpha + \eta} \leq h(\alpha)+1$.
Then, by \thref{thm:main2} we have that $h(\alpha) \leq m_\alpha$ for all $\alpha$, completing our proof. 
\end{proof}

\section{The upper bound for $m_\alpha$}

The main goal of this section is to prove that $m_\alpha \leq s_\alpha$. Our primary tool for relating Poisson sumsets to permutations is the Feller coupling. This is described at the onset of Section \ref{sec:feller}. We then use it in that section to prove several estimates on the cycle structure of $\ESFn$ permutations. With these we prove \thref{thm:pyber}, \thref{thm:bovey}, and \thref{claim:claim2} in Section \ref{sec:bovey_and_pyber}. We put all of this together and establish \thref{ESF} in Section \ref{sec:ESF}.

\subsection{The Feller Coupling} \label{sec:feller}

The Ewens sampling formula can be obtained via an elegant coupling attributed to Feller \cite[p.\ 815]{feller1945fundamental} for uniform permutations. The articles \cite{arratia1992,arratia2016} have nice descriptions. We start with a sequence $\xi_1,\xi_2,\hdots$ of mixed Bernoulli random variables with $\P[\xi_i = 1] = \f{\alpha}{\alpha+i-1}$ and $\P[\xi_i = 0] = \f{i-1}{\alpha+i -1}.$ Define an $\ell$-spacing to be any occurrence of $\ell-1$ zeros enclosed by a 1 on the left and right (e.g.\ $1,0,0,0,1$ is a $4$ spacing). Remarkably, the total number of $\ell$-spacings, $Y_\ell$, is distributed as $\Poi(\alpha/\ell)$. Moreover, the collection $Y_1,Y_2,\hdots$ are independent (see \cite{arratia2016, arratia1992}). 

The counts of spacings in $\xi_1,\hdots,\xi_n,1$ generate a partition of $n$, which can be filled in uniformly randomly to form a permutation with $C_\ell$ cycles of size $\ell$, where $C_\ell$ are sampled according to an $ESF(\alpha,n)$ distribution. Explicitly,
%
%
\begin{align} C_\ell &=\text{the number of $\ell$-spacings in $\xi_1,\hdots,\xi_n,1$} \nonumber \\
					&=\xi_{n-\ell +1}(1-\xi_{n-\ell+2})\cdots (1- \xi_{n}) + \sum_{k=1}^{n- \ell} \xi_k(1-\xi_{k+1})\cdots(1- \xi_{k+\ell-1}) \xi_{k+\ell} \label{eq:C_formula}.
\end{align}
This gives an intuition for why $ESF(1,n)$ corresponds to a uniformly random permutation. Indeed, we can inductively construct such a permutation by letting $\xi_i$ indicate the decision to complete a cycle, when there is an $i$-way choice for the next element. 

Letting $R_n$ be the index of the rightmost 1 in $\xi_1,\hdots, \xi_n$ and $J_n = n+1 - R_n$ it follows from the previous discussion that
\begin{align}
C_\ell \leq Y_\ell + \ind{J_n = \ell} \label{eq:feller}.\end{align}
This says that the cycle counts 
can be obtained from independent Poisson random variables through a random number of deletions, and possibly one insertion. We require a few estimates on the behavior of these perturbations.

\begin{lemma} \thlabel{lem:feller} Let $J_n$ be as defined above, and $D_n = \sum_{\ell = 1}^n Y_\ell - \ind{J_n = \ell} 
- C_\ell$ be the number of deletions. The following four inequalities hold:
  
			\begin{align}
				\P[J_n = \ell] &\leq \f{\alpha}{n -\ell},  \qquad\qquad \quad \:\ell < n \label{eq:J1}.	
		\\\nonumber \\
				\P[J_n = \ell] &\leq c n^{-\alpha(1-\gamma)},\qquad  \;n-n^{\gamma} < \ell <n, \quad 0<\gamma <1, \textnormal{ for some $c=c(\gamma) > 0$}.\label{eq:J2}
		\\\nonumber \\
				\P[J_n = \ell] &= O(n^{- \alpha/(1+ \alpha)}), \qquad \ell < n\label{eq:J3}.
		\\\nonumber \\
			\E D_n & = O(1).\label{eq:D} 
			\end{align}

\end{lemma}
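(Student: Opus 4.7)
The plan is to derive all four bounds from a single explicit product formula for $\P[J_n = \ell]$, together with Gamma-function asymptotics. The event $\{J_n = \ell\}$ is exactly $\{\xi_{n-\ell+1} = 1,\ \xi_{n-\ell+2} = \cdots = \xi_n = 0\}$, so by independence of the $\xi_i$,
\[
  \P[J_n = \ell] \;=\; \frac{\alpha}{\alpha + n - \ell}\; \prod_{i = n - \ell + 2}^n \frac{i - 1}{\alpha + i - 1}.
\]

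For \eqref{eq:J1} I will simply discard the product (every factor lies in $[0,1]$) and estimate $\alpha/(\alpha + n - \ell) \leq \alpha/(n-\ell)$. For \eqref{eq:J2} I will rewrite the product in closed form as
\[
  \prod_{i = n - \ell + 2}^n \frac{i - 1}{\alpha + i - 1} \;=\; \frac{\Gamma(n)\,\Gamma(k+1+\alpha)}{\Gamma(k+1)\,\Gamma(n+\alpha)}, \qquad k := n - \ell,
\]
and apply the classical asymptotic $\Gamma(x+\alpha)/\Gamma(x) = x^\alpha(1 + O(1/x))$ to reduce the product to $O((k/n)^\alpha)$. The hypothesis $k < n^\gamma$ then gives $O(n^{-\alpha(1-\gamma)})$. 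Inequality \eqref{eq:J3} will follow by balancing the two bounds at the crossover $k \asymp n^{\alpha/(1+\alpha)}$: for $k \leq n^{\alpha/(1+\alpha)}$ I apply \eqref{eq:J2} with $\gamma = \alpha/(1+\alpha)$, while for larger $k$ the bound $\alpha/k \leq \alpha n^{-\alpha/(1+\alpha)}$ from \eqref{eq:J1} takes over.

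For \eqref{eq:D} the plan is to first reinterpret $D_n$ combinatorially. Since $C_\ell$ counts $\ell$-spacings in $\xi_1,\dots,\xi_n,1$ (the appended $1$ contributing exactly through $\ind{J_n = \ell}$) while $Y_\ell$ counts all $\ell$-spacings in the infinite sequence, the telescoping collapses to
\[
  D_n \;=\; \#\bigl\{\ell\text{-spacings }(k,k+\ell) \text{ in the infinite sequence with } 1\leq \ell \leq n,\ k\geq 1,\ k+\ell > n \bigr\},
\]
i.e., $D_n$ is the number of $1$'s at positions $j > n$ whose nearest preceding $1$ lies within distance $n$. I will then bound $\E D_n$ by linearity, splitting $j = k+\ell$ into two regimes. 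For $n < j \leq 2n$ the trivial bound $\P[\xi_j = 1] \leq \alpha/(j-1)$ suffices and sums to $\alpha\log 2 + o(1)$. For $j > 2n$ I will exploit the independence of $\xi_j$ from $\{\xi_i : j-n \leq i < j\}$ together with a union bound on the window to show
\[
  \P\bigl[\xi_j = 1,\ \exists\,\xi_i = 1 \text{ with } j-n \leq i < j\bigr] \;\leq\; \frac{\alpha}{\alpha+j-1}\cdot \alpha\log\!\frac{j}{j-n} \;=\; O(n/j^2),
\]
whose tail sums to $O(1)$.

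The main technical hurdle is the combinatorial identification of $D_n$ as a spacing count, which requires carefully matching $\ell$-spacings in the infinite sequence against those in $\xi_1,\dots,\xi_n,1$ and verifying that the artificial $1$ at position $n+1$ contributes precisely via the $\ind{J_n = \ell}$ correction. Once that identification is in place, the Gamma-function estimates for \eqref{eq:J1}--\eqref{eq:J3} and the expectation bound for \eqref{eq:D} reduce to standard manipulations of sums of independent Bernoullis with success probabilities $\asymp \alpha/i$.
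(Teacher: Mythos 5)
Your proof is correct, though it diverges from the paper's at two points and is arguably more self-contained. For \eqref{eq:J1}, the paper invokes the identity $\P[J_n=\ell]=\ell\,\E C_\ell/n$ from Arratia--Tavar\'e and then plugs in the Feller coupling, whereas you simply discard the product of $\P[\xi_i=0]$ factors from the explicit Bernoulli product; both yield $\alpha/(n-\ell)$, but yours avoids the external reference. For \eqref{eq:J2}, the paper bounds the same product via a Riemann-sum comparison and an explicit antiderivative, while you identify it as a ratio of Gamma functions and use $\Gamma(x+\alpha)/\Gamma(x)\asymp x^\alpha$; these are equivalent in substance, though you should note that $\Gamma(k+1+\alpha)/\Gamma(k+1)=O((k+1)^\alpha)$ uniformly in $k\geq 0$ (boundedness of $\Gamma(x+\alpha)/(\Gamma(x)x^\alpha)$ on $[1,\infty)$) rather than applying the asymptotic at a possibly small argument. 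The split for \eqref{eq:J3} is identical. The most substantial difference is \eqref{eq:D}: the paper simply cites Arratia--Tavar\'e's Theorem 2, whereas you give a direct proof by recognizing $D_n$ as the count of spacings in the infinite $\xi$-sequence whose right endpoint lies beyond $n$ but whose length is at most $n$, then splitting the sum over right endpoints $j$ at $j=2n$. I verified your combinatorial identification against the paper's formula \eqref{eq:C_formula}: indeed $Y_\ell-(C_\ell-\ind{J_n=\ell})=\sum_{k>n-\ell}\xi_k(1-\xi_{k+1})\cdots(1-\xi_{k+\ell-1})\xi_{k+\ell}$, and the two-regime expectation bound sums correctly to $O(1)$. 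So your argument is valid and replaces an opaque citation with a transparent calculation; the tradeoff is a longer write-up.
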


\begin{proof}
The inequality in \eqref{eq:J1} follows from the formula given in \cite[Remark following (17)]{arratia1992}, which says $\P[J_n = \ell] =  \ell \E C^{(n)}_\ell / n.$ The coupling at \eqref{eq:feller} ensures that $\P[J_n = \ell] \leq  (\alpha + \ell \P[J_n = \ell])/n$. Solving for $\P[J_n = \ell]$ gives \eqref{eq:J1}.


Fix some $\gamma \in (0,1)$. For the case $\ell>n-n^{\gamma}$ in \eqref{eq:J2} we use that when $J_n=\ell$ we must have $\xi_{n-\ell +1} =1$ and $\xi_{k}=0$ for all larger index terms up to $\xi_n$. The probability of this can be computed explicitly as 
\begin{align*}
		\P[J_n = \ell]&= \frac{\alpha}{\alpha+n-\ell}\prod_{n-\ell+2}^{n} \frac{i-1}{\alpha+i-1} 
					  \leq \prod_{n-\ell+1}^{n} \frac{i-1}{\alpha+i-1}.
\end{align*}
We estimate the product by converting it to a sum
\[
\prod_{n-\ell+1}^{n} \frac{i-1}{\alpha+i-1}=\exp\biggl(-\sum_{n-\ell+1}^{n} \log \frac{\alpha+i-1}{i-1} \biggr)\leq  \exp\biggl(-\int_{n-\ell+1}^{n} \log \frac{\alpha+x-1}{x-1} dx\biggr).
\]
The last inequality follows from the fact that $\log\frac{\alpha+x-1}{x-1}$ is a decreasing function, and the sum can be viewed as a right-sided Riemann (over-)approximation. This integral has a closed form
\begin{align}
-\int_{n-\ell+1}^{n} \log\left(\frac{\alpha+x-1}{x-1}\right)dx = -\log \left(\frac{(\alpha+n-1)^{\alpha+n-1}}{(n-1)^{n-1}} \right)+ \log \left(\frac{(\alpha+n-\ell)^{\alpha+n-\ell}}{(n-\ell)^{n-\ell}} \right). \nonumber
\end{align}
Hence, exponentiating the above line gives
\begin{align}
\prod_{n-\ell+1}^{n} \frac{i-1}{\alpha+i-1} & \leq \frac{(n-1)^{n-1}}{(\alpha+n-1)^{\alpha+n-1}} \times \frac{(\alpha+n-\ell)^{\alpha+n-\ell}}{(n-\ell-1)^{n-\ell}}. \label{eq:hot_mess}
\end{align}
The leftmost product is less than $n^{-\alpha}$. The rightmost product can be written as
\begin{align}(\alpha + n- \ell)^\alpha \left( \f{n - \ell+\alpha}{n-\ell -1} \right)^{n- \ell}= (\alpha + n- \ell)^\alpha \left( \f{1+\f{\alpha}{n-\ell}}{1-\f{1}{n-\ell}} \right)^{n- \ell}. \label{eq:hot_mess2}	
\end{align}
The rightmost product in \eqref{eq:hot_mess2} is asymptotic to $e^{\alpha}/e^{-1}$ as $n- \ell \to \infty$, and thus may be bounded by some constant for all values of $n-\ell$. 
Using the hypothesis that $n-\ell \leq n^{\gamma}$, it follows that \eqref{eq:hot_mess2} is universally bounded by $c n^{\alpha\gamma }$ for some constant $c$. 
We conclude that \eqref{eq:hot_mess} is less than $c n^{-\alpha + \alpha \gamma} = c n ^{ -\alpha ( 1- \gamma)}.$ This establishes \eqref{eq:J2}.

%
%
%

To obtain the universal bound at \eqref{eq:J3}, observe when $\ell \leq n - n^{\alpha/(1+\alpha)}$ we can input this into \eqref{eq:J1} to obtain
$$\P[J_n = \ell]  \leq \f{ \alpha}{n - (n - n^{\alpha/(1+\alpha)})} = O(n^{-\alpha/(1+\alpha)}).$$
And, for $\ell > n - n^{\alpha}$ we substitute $\gamma = \alpha/(1+\alpha)$ into \eqref{eq:J2} to obtain the same asymptotic inequality: $O(n^{-\alpha/(1+ \alpha)})$.

Lastly, \eqref{eq:D} is  proven in \cite[Theorem 2]{arratia1992}.
\end{proof}

We will also require a bound on the joint probability that two cycles occur simultaneously. There will be some poly-log terms that, besides being a minor nuisance, do not effect the tack of our proof. Recall the $\tilde O$ notation that ignores logarithmic contribution. It is described carefully in Section \ref{sec:notation}. 


\begin{lemma}\thlabel{lem:jointcycles}
For any $i<j \leq n$ it holds that
$$\P[C_iC_j >0] = \tilde O(i^{-1} j^{- \alpha/(1+ \alpha)} ).$$
\end{lemma}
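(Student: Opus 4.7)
The approach is to upper-bound $\P[C_iC_j > 0]$ by $\E[C_iC_j]$ and apply the Feller coupling \eqref{eq:feller} to reduce to joint moments of the independent Poisson variables $Y_\ell$ together with the endpoint correction $J_n$. First, if $i+j>n$ then two disjoint cycles of sizes $i,j$ cannot fit in $[n]$, so $\P[C_iC_j > 0] = 0$; assume henceforth $i+j\leq n$, which combined with $i<j$ gives $i\leq (n-1)/2$ and $n-i\geq (n+1)/2$. From $C_\ell \leq Y_\ell + \ind{J_n = \ell}$ and the mutual exclusion of $\{J_n=i\}$ and $\{J_n=j\}$ when $i\neq j$,
\begin{align*}
\E[C_iC_j] \leq \E[Y_iY_j] + \E[Y_i\ind{J_n=j}] + \E[Y_j\ind{J_n=i}].
\end{align*}
Independence of the $Y_\ell$'s yields $\E[Y_iY_j]=\alpha^2/(ij)$, which lies inside $\tilde O(i^{-1}j^{-\alpha/(1+\alpha)})$ since $j^{\alpha/(1+\alpha)}\leq j$.

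For the cross term $\E[Y_j\ind{J_n=i}]$ (the other is symmetric), write $Y_j=\sum_{k\geq 1}\ind{A_k}$ where $A_k$ is the event that a $j$-spacing of the Bernoulli sequence $\xi$ starts at position $k$. Since $\{J_n=i\}$ is determined by $\xi_{n-i+1},\ldots,\xi_n$, the events $A_k$ and $\{J_n=i\}$ are independent whenever the interval $[k,k+j]$ is disjoint from $[n-i+1,n]$, i.e., $k\notin[n-i-j+1,n]$. Splitting accordingly,
\begin{align*}
\E[Y_j\ind{J_n=i}] \leq \P[J_n=i]\cdot\E[Y_j] + \sum_{k=n-i-j+1}^{n}\P[A_k\cap\{J_n=i\}].
\end{align*}
The first summand is $O(1/(nj))$ by \eqref{eq:J1} and $i\leq(n-1)/2$, which fits inside $\tilde O(i^{-1}j^{-\alpha/(1+\alpha)})$ since $i\leq n$. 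For the overlap sum, a direct case check of the pattern required by $A_k$ against the values $\xi_{n-i+1}=1$ and $\xi_{n-i+2}=\cdots=\xi_n=0$ forced by $\{J_n=i\}$ shows that only two values of $k$ give a nonzero joint probability: $k=n-i-j+1$, where the $j$-spacing ends at the rightmost $1$, and $k=n-i+1$, where it begins there and uses $\xi$'s past position $n$. For each such $k$, the joint probability is a product of independent Bernoulli marginals whose two ``$\xi=1$'' factors contribute the leading $\alpha^2$-term and whose ``$\xi=0$'' factors can be controlled asymptotically via $\prod_{m\in I}(m-1)/(\alpha+m-1)\lesssim (\min I/\max I)^\alpha$.

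Combining, the overlap contribution is at most $C_\alpha(n-i-j\vee 1)^{\alpha-1}/[(n-i)\,n^\alpha]$, and using $n-i\asymp n$ along with the identity $1-\alpha-1/(1+\alpha)=-\alpha^2/(1+\alpha)\leq 0$ one checks that this fits inside $\tilde O(i^{-1}j^{-\alpha/(1+\alpha)})$ across all regimes, completing the argument. The main obstacle is the combinatorial case check in the overlap sum, together with the asymptotic control of the ``$\xi=0$'' products; this is especially delicate at the boundary $i+j\approx n$, where the trivial estimate $\P[J_n=i]\leq \alpha/(n-i)$ begins to deteriorate and one must appeal to the sharper bound \eqref{eq:J3} (or \eqref{eq:J2}) to recover the required decay.
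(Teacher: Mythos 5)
Your proof is correct and streamlines the paper's argument in an interesting way. Both approaches expand $C_iC_j$ into a ``rightmost-spacing'' piece and a ``middle'' piece and then isolate the boundary overlap that is controlled by \eqref{eq:J3}, but the decompositions differ. The paper works directly from \eqref{eq:C_formula}, bounding the middle piece by the \emph{overcounting} sum $\sum_k \xi_k\xi_{k+\ell}$ (which drops the requirement that the intermediate $\xi$'s vanish). Because the two overcounting sums for $C_i$ and $C_j$ share the same underlying $\xi$'s, the paper must partition the double index set into $\Lambda_0,\ldots,\Lambda_4$ to disentangle them. You instead use the Feller inequality $C_\ell\le Y_\ell+\ind{J_n=\ell}$ with the \emph{exact} spacing counts $Y_\ell$, and the independence of $Y_1,Y_2,\ldots$ collapses the analogous term to $\E[Y_iY_j]=\alpha^2/(ij)$ immediately; the $\Lambda$-bookkeeping disappears. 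Your case check for which overlap indices $k$ yield nonzero joint probability is the analogue of the paper's treatment of the isolated boundary summand $\ell=n-i-j+1$ and is correct, and identifying $\eqref{eq:J3}$ as the place where the bound would otherwise fail is exactly the right insight (the paper invokes it in both the $xw$ and $yw$/$yz$ blocks, using $i\le n-j$). One remark: ``the other is symmetric'' papers over a genuine asymmetry. For $\E[Y_i\ind{J_n=j}]$ only the single value $k=n-i-j+1$ survives the case check (since the $i$-spacing starting at $k=n-j+1$ forces $\xi_{n-j+i+1}=1$ inside $[n-j+2,n]$, contradicting $J_n=j$), and the ``far'' piece $\P[J_n=j]\E[Y_i]$ requires \eqref{eq:J3} from the start because \eqref{eq:J1} gives only $\alpha/(n-j)$, which is not $O(1/n)$; you need $i\le n-j$ to close the estimate $\frac1{n-j}\,n^{-\alpha/(1+\alpha)}\le i^{-1}j^{-\alpha/(1+\alpha)}$. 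You flag this at the end, so there is no gap, but in a written-out version the two cross terms deserve separate treatment rather than appeal to symmetry.
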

\begin{proof}
We start by using the formula at \eqref{eq:C_formula} to bound $C_i$ with the $\xi_k$,
$$C_i \leq \xi_{n-i +1}(1-\xi_{n-i+2})\cdots (1- \xi_{n}) + \sum_{k=1}^{n-i} \xi_k \xi_{k+i}.$$
Thus,
\begin{align*}C_iC_j &\leq \Bigl(\xi_{n-i +1}(1-\xi_{n-i+2})\cdots (1- \xi_{n})+ \sum_{k=1}^{n-i} \xi_k \xi_{k+i} \Bigr) \\
& \qquad \qquad \times \Bigl(\xi_{n-j +1}(1-\xi_{n-j+2})\cdots (1- \xi_{n})  +\sum_{\ell=1}^{n-j} \xi_\ell \xi_{\ell+j} \Bigr)\\
&=(x+y)(z+w).
\end{align*}
The last line is just labeling the terms in the product immediately above in the natural way (i.e. 
$y= \sum_{k=1}^{n-i} \xi_k \xi_{k+i}$). This is so we can bound $\E[xz+ xw+ yw + yz]$ in an organized fashion, one term at a time. 

\begin{description}
	\item[Term $xz$]

		First, consider $\xi_{n-i +1}(1-\xi_{n-i+2})\cdots (1- \xi_{n})\xi_{n-j +1}(1-\xi_{n-j+2})\cdots (1- \xi_{n}).$
		When $i<j$ this product contains the term $\xi_{n-i+1}(1- \xi_{n-i+1}) \equiv 0$, thus it is always zero. We conclude that 
		\begin{align}\E[xz] \equiv 0.\label{eq:xz}\end{align}

	\item[Term $xw$] 

		Next, we consider the cross-term $\xi_{n-i +1}(1-\xi_{n-i+2})\cdots (1- \xi_{n})\sum_{\ell =1}^{n-j} \xi_\ell \xi_{\ell +j}.$ Many of the summands are zero. In fact, the overlapping terms with $n-j-i+1<\ell \leq n-j$ are identically zero. So, for the sake of obtaining disjoint terms, we consider the smaller sum
		\begin{align*}
		\xi_{n-i +1}(1-\xi_{n-i+2})\cdots (1- \xi_{n})  \sum_{\ell =1}^{n-j-i+1}	\xi_\ell \xi_{\ell +j}.
		\end{align*}
		Aside from the $\ell = n-j-i+1$ term, all the indices are all disjoint. 
		Ignoring this term for now, the expected value is
%
		\begin{align}
		\f{\alpha}{\alpha + n-i} \f{ n-i+1}{\alpha+ n - i +1} \cdots    \f{n-1}{\alpha+n-1}  \sum_{\ell =1}^{n-j-i}	\f{\alpha^2}{(\alpha + \ell-1)(\alpha + \ell -1 +j)}. \label{eq:sum}
		\end{align}

		Now $i$ and $j$ correspond to cycle lengths, so we must have $i + j \leq n$ to have a nonzero expectation at all. 
		Since $i<j$ we conclude that $i<n/2$. 
		A simple bound on the product outside the sum is to plug this value into the first term and ignore the rest. This gives the bound
		$$\f{\alpha}{\alpha + n-i} \f{ n-i+1}{\alpha+ n - i +1} \cdots   \f{n-1}{\alpha+n-1} < \f{\alpha}{n - (n/2)+1} < \f{2\alpha}{n}.$$
		Ignoring the product outside of it, the sum can be compared to the integral $\int_1^n \f{1}{x(x+j)}dx$ to prove it is $O(\log(j)/j)$. Thus the expression in \eqref{eq:sum} is $\tilde O(1/nj)$.
		
		Rather than using the rough bound from above, the full $\ell = n-j-i+1$ term is
%
		\[
			\xi_{n-j-i+1}(1- \xi_{n-j-i+2}) \cdots(1-\xi_{n-i}) \xi_{n-i+1}^2 (1 - \xi_{n-i+2}) \cdots (1- \xi_n) .
		\]
		The expectation of this term is
		\[
			\f{\alpha}{\alpha+n-j-i} \left(\prod_{k= n-j-i+2}^{n-i} \f{ k -1}{\alpha + k -1} \right) \f{\alpha}{\alpha+n-i} \left( \prod_{k = n - i+2}^{n} \f{k-1}{\alpha+k-1} \right).
		\]
		But this is just
		\begin{align*}
			\f{\alpha}{\alpha + n -i} \f{\alpha+n-i}{n-i} \P [ J_n = j+i] &= \f{\alpha}{n-i} \P [ J_n = j+i] \\
			&\leq \f{2\alpha}{n}\P [ J_n = j+i].
		\end{align*}
		Then by \thref{lem:feller} \eqref{eq:J3} we have the bound
		\[
			\P [ J_n = j+i] = O(n^{-\alpha/(1+\alpha)} ).
		\]
		Thus this term is $ O(i^{-1} j^{-\alpha/(1+\alpha)})$, and
		\begin{align}
\E[xw]=  		\tilde O(i^{-1} j^{-\alpha/(1+\alpha)}). \label{eq:xw}
		\end{align}

\item[Term $yw$] Because $i<j$ the term $\xi_{n-j +1}(1-\xi_{n-j+2})\cdots (1- \xi_{n})\sum_{\ell =1}^{n-i} \xi_\ell \xi_{\ell +i}$  behaves differently than the case just prior. In fact, it is the largest order term amongst the four. The first step is the same as before though. A few of the terms are identically zero. So, we can write $yw$ as
$$\xi_{n-j +1}(1-\xi_{n-j+2})\cdots (1- \xi_{n})\sum_{\ell =1}^{n-i-j+1} \xi_\ell \xi_{\ell +i}.$$
Except for when $\ell = n-i-j+1$, all of the indices now disjoint.
When we take expectation of the terms with disjoint indices we obtain
$$\f{\alpha}{\alpha + n-j} \f{ n-j+1}{\alpha+ n - j +1} \cdots    \f{n-1}{\alpha+n-1}  \sum_{\ell =1}^{n-j-i}	\f{\alpha^2}{(\alpha + \ell -1) (\alpha + \ell+i -1)}.$$
Notice that $j$ could be quite large. The best bound we have for the product outside the sum comes from \eqref{eq:J3}.
 This implies it is $O(n^{-\alpha/(1+\alpha)})$.
As with \eqref{eq:sum}, the sum term is $O(\log i /i)$. This yields that the whole expression is $\tilde O(i^{-1} j^{-\alpha/(1+\alpha)})$.

As before, the full $\ell = n - i -j +1$ term is 
\[
	\xi_{n-i-j+1} (1- \xi_{n-j-i+2}) \cdots(1-\xi_{n-j}) \xi_{n-j+1}^2 (1 - \xi_{n-i+2}) \cdots (1- \xi_n) .
\]
The expectation of this term is 
	\[			
		\f{\alpha}{\alpha+n-j-i} \left(\prod_{k= n-j-i+2}^{n-j} \f{ k -1}{\alpha + k -1} \right) \f{\alpha}{\alpha+n-i} \left( \prod_{k = n - j+2}^{n} \f{k-1}{\alpha+k-1} \right).
	\]
This simplies to 
\[
	\f{\alpha}{\alpha + n -j} \f{\alpha+n-j}{n-j} \P [ J_n = j+i] = \f{\alpha}{n-j} \P [ J_n = j+i].
\]
By construction $i \leq n-j$, and again by \thref{lem:feller} \eqref{eq:J3}, we have that $\P[J_n = j+i] = O(n^{-\alpha/(1+\alpha)})$.
Thus this term has expectation that is $O(i^{-1} j^{-\alpha/(1+\alpha)})$. It follows that
\begin{align}\E[yw] = \tilde O( i^{-1}n^{-\alpha/(1 + \alpha)}).\label{eq:yw}\end{align}
\item[Term $yz$] The last step is to bound the product 
\begin{align}
\Bigl(\sum_{k=1}^{n-i} \xi_k \xi_{k+i} \Bigr)\Bigl( \sum_{\ell=1}^{n-j} \xi_\ell \xi_{\ell+j} \Bigr) = \sum_{k,\ell=1}^{n-i,n-j}\xi_k \xi_{k+i} \xi_\ell \xi_{\ell+j}.	\label{eq:cross_term}
\end{align}
Call the index set $\Lambda = \{(k ,\ell) \colon k \in [1,\hdots, n-i], \ell \in [1,\hdots,n-j]\}$. As $i <j$, we can divide $\Lambda$ into  disjoint sets 
\begin{align*}
	\Lambda_0 &= \{k, k+i,  \ell, \text{ and } \ell +j \text{ are all distinct}\},\\
	\Lambda_1 &= \{k= \ell\},\\
	\Lambda_2 &= \{ k= \ell+j\},\\
	\Lambda_3 &= \{k+i= \ell\},\\
	\Lambda_4 &= \{k+i= \ell+j\}.
\end{align*}
This yields the following upper bound on \eqref{eq:cross_term}:
\begin{align*}	
	\sum_{\Lambda_0} \xi_k \xi_{k+i}\xi_\ell \xi_{\ell +j} +
	\sum_{\Lambda_1} \xi_k \xi_{k+i}\xi_{\ell +j} +
	\sum_{\Lambda_2} \xi_k \xi_{k+i}\xi_{\ell}+ 
	\sum_{\Lambda_3} \xi_k \xi_{k+i}\xi_{\ell+j} +
	\sum_{\Lambda_4} \xi_k \xi_{k+i}\xi_{\ell}.
\end{align*}
The payoff of this partition is that each product of $\xi$ above is of distinct, independent terms. This lets us tidily compute the expectation of each.
For $(k,\ell) \in \Lambda_0$ we have the $k,k+i,\ell, \ell+j$ are all distinct. This gives
$$\E[\xi_k\xi_{k+i}\xi_{\ell} \xi_{\ell+j}] \leq  \f A  {k(k+i)\ell(\ell+j)}$$
for some constant $A$ depending only on $\alpha$. We can take the expectation of the entire sum
\begin{align*}\E \sum_{\Lambda_0} \xi_{k} \xi_{k+i} \xi_{\ell}\xi_{\ell +j} \leq \sum_{k=1,\ell=1}^{n} \E [\xi_{k} \xi_{k+i} \xi_{k +j}] \leq \sum_{k=1,\ell=1}^{n} \f{A}{k (k+i) \ell (\ell+j) } = \tilde  O( 1/ij).
\end{align*}

We can parametrize $\Lambda_1$ by $k$ to write it as $\Lambda_1 = \{ (k,k) \colon k=1,\hdots, n-j\}$, this gives the bound
\begin{align}\E \sum_{\Lambda_1} \xi_{k} \xi_{k+i} \xi_{\ell +j} \leq \sum_{k=1}^{n} \E [\xi_{k} \xi_{k+i} \xi_{k +j}] \leq \sum_{k=1}^{n} \f{A}{k (k+i) (k+j) } =  \tilde  O( 1/ij).\nonumber
\end{align}
We can similarly parametrize the sums over $\Lambda_2,\Lambda_3$ and $\Lambda_4$ to obtain 
\begin{align} \E[yz] =  \tilde  O(1/ij) \label{eq:yz} 
\end{align}
\end{description}

All of the expressions at \eqref{eq:xz}, \eqref{eq:xw}, \eqref{eq:yw}, and \eqref{eq:yz} are $\tilde O(i^{-1}j^{-\alpha/(1+\alpha)})$. Markov's inequality yields
\begin{align}
\P[C_iC_j > 0] = \tilde O(i^{-1}j^{-\alpha/(1+\alpha)}).
\label{eq:PC_iC_j}
\end{align}
This proves the claimed inequality. 
\end{proof}

We also need a bound on the probability that cycles of the same size occur.

\begin{lemma} \thlabel{lem:2same}
For any $i \leq n$ it holds that
$$\P[C_i \geq 2] = \tilde O(i^{-2} + n^{-1 - \alpha/(1+\alpha)}).$$	
\end{lemma}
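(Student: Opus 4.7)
The plan is to follow the same Feller-coupling strategy as \thref{lem:jointcycles}. Using \eqref{eq:C_formula}, write
\[
  C_i = A_0 + \sum_{k=1}^{n-i} A_k, \qquad A_0 = \xi_{n-i+1}\prod_{j=n-i+2}^{n}(1-\xi_j) = \ind{J_n = i},
\]
with $A_k = \xi_k \prod_{j=k+1}^{k+i-1}(1-\xi_j)\xi_{k+i}$. Two disjoint $i$-cycles consume $2i$ elements, so $C_i \leq 1$ whenever $i > n/2$ and it suffices to treat $i \leq n/2$. Splitting on whether $A_0$ vanishes and applying Markov,
\[
  \P[C_i \geq 2] \leq \P\Bigl[\tsum_{k\geq 1} A_k \geq 2\Bigr] + \tsum_{k \geq 1} \E[A_0 A_k].
\]

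For the first term I would run a second-moment argument exactly as for the $yz$-term in \thref{lem:jointcycles}: among the cross products $\E[A_{k_1}A_{k_2}]$ with $k_1 < k_2$, those with $k_1 < k_2 < k_1 + i$ vanish because $A_{k_1}$ demands $\xi_{k_2} = 0$ while $A_{k_2}$ demands $\xi_{k_2} = 1$. The surviving terms are bounded by $A^4/[k_1(k_1+i)k_2(k_2+i)]$ when $k_2 > k_1+i$ (disjoint indices) and by $A^3/[k_1(k_1+i)(k_1+2i)]$ when $k_2 = k_1 + i$ (the shared $\xi_{k_1+i}$ being consistently $1$). Summing the two double series yields $\tilde O(i^{-2})$.

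For the cross terms $\E[A_0 A_k]$ I split on $k$. Whenever $k > n-2i+1$, the index $k+i$ lies in $\{n-i+2,\ldots,n\}$, where $A_0$ forces $\xi_{k+i} = 0$ but $A_k$ forces $\xi_{k+i} = 1$, so the term vanishes. For $1 \leq k \leq n-2i$ the two index sets are disjoint, and by independence of the $\xi_j$'s together with \eqref{eq:J1},
\[
  \sum_{k=1}^{n-2i} \E[A_0 A_k] = \P[J_n = i] \sum_{k=1}^{n-2i} \E[A_k] \leq \f{\alpha}{n-i}\cdot \tilde O(i^{-1}) \leq \tilde O(i^{-2}),
\]
using $i \leq n$ in the last step. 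The remaining contribution is the boundary case $k = n - 2i + 1$, where the shared index $\xi_{n-i+1}$ is consistently demanded to equal $1$ by both factors; expanding the joint Bernoulli product and regrouping to isolate the suffix $\xi_{n-2i+2},\ldots,\xi_n$ produces the clean identity
\[
  \E[A_0 A_{n-2i+1}] = \f{\alpha \, \P[J_n = 2i]}{n-i}.
\]
Applying the universal bound $\P[J_n = 2i] = O(n^{-\alpha/(1+\alpha)})$ from \eqref{eq:J3} and $n - i \geq n/2$ then yields $O(n^{-1-\alpha/(1+\alpha)})$, which is the second term in the claimed bound.

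The main obstacle is obtaining the exact identity for $\E[A_0 A_{n-2i+1}]$ rather than settling for the naive product bound $\P[A_0]\P[A_{n-2i+1}] \sim \P[J_n = i]^2 = O(n^{-2\alpha/(1+\alpha)})$, which is insufficient because $2\alpha/(1+\alpha) < 1 + \alpha/(1+\alpha)$ for every $\alpha > 0$. Once the identity is in hand, combining the three contributions gives $\P[C_i \geq 2] = \tilde O(i^{-2} + n^{-1-\alpha/(1+\alpha)})$ on $i \leq n/2$, while the range $i > n/2$ is already trivially handled.
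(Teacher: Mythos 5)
Your proof is correct and gives the same bound, but via a genuinely different decomposition than the paper. The paper bounds $\P[C_i>2]$ directly via the Feller coupling $C_i\leq Y_i+\ind{J_n=i}$ (so it reduces to $\P[Y_i>1]=O(i^{-2})$), then splits $\P[C_i=2]$ by conditioning on $\{J_n=i\}$: on that event it interprets $C_i-1$ as the number of $i$-spacings in a truncated Feller sequence $\xi_1,\ldots,\xi_{n-i},1$ and bounds its mean by $\P[J_{n-i}=i]+\tilde O(1/i)$; off that event it again falls back on the Poisson variable $Y_i$. You instead expand $C_i=A_0+\sum_k A_k$ exactly via \eqref{eq:C_formula}, never invoking the Poisson $Y_i$, and run a second-moment computation in the style of the $yz$-term in \thref{lem:jointcycles} — partitioning the index pairs $(k_1,k_2)$ into ``too close'' (vanishing), ``touching'' ($k_2=k_1+i$, three independent $\xi$'s), and ``disjoint.'' For the boundary cross-terms you observe the correct vanishing when $k>n-2i+1$, independence when $k\leq n-2i$, and derive the exact identity $\E[A_0 A_{n-2i+1}]=\frac{\alpha}{n-i}\P[J_n=2i]$. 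It is a nice check that your boundary quantity coincides algebraically with the paper's product $\P[J_n=i]\P[J_{n-i}=i]$ (one can verify $\frac{\alpha}{n-i}\P[J_n=2i]=\P[J_n=i]\P[J_{n-i}=i]$ directly from the Bernoulli parameters), so both arguments ultimately apply \eqref{eq:J3} to the same object. What your route buys is a single self-contained computation that does not appeal to the Poisson coupling $Y_i$ or to re-interpreting a shortened Feller sequence as a smaller $\ESF$ permutation; what the paper's route buys is less bookkeeping of cross-terms, at the cost of the less elementary observation that the product appearing in $\E C_i^-$ equals $\P[J_{n-i}=i]$. Two small cosmetic points: you should say explicitly that you use $i\leq n/2$ (not merely $i\leq n$) to bound $\frac{1}{i(n-i)}\leq\frac{1}{i^2}$, and the constant $A$ in your moment bounds (which depends only on $\alpha$) deserves a word of definition.
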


\begin{proof}

We start by bounding $\P[C_i >2]$. By \eqref{eq:feller} we have
\begin{equation}
	\P[Y_i + \ind{J_n = i} > 2] \leq \P[Y_i >1] = O(1/i^2). \label{eq:term1}
\end{equation}
It remains to bound $\P[C_i = 2].$ Conditioning on the value of $J_n$, we obtain the decomposition
\begin{align}
\P[C_i = 2 ] &= \P[C_i =2 \mid J_n = i]\P[J_n = i] + \P[C_i = 2\mid J_n \neq i]\P[J_n \neq i]  \label{eq:term2}.
\end{align}
First we estimate the first summand on the right hand side of \eqref{eq:term2}. Because two cycles larger than $n/2$ cannot occur, we must have $i \leq n/2$. 
Then by \thref{lem:feller} \eqref{eq:J1}, $\P[J_n = i] \leq 2\alpha/n.$ When $J_n=i$, it ensures that the sequence $\xi_1,\hdots,\xi_n$ has $\xi_{n-i+1}=1$ and the larger index $\xi_k$ are zero. The smaller index $\xi_i$ are independent of this event. The value of $C_i$ conditioned on $J_n=i$ is thus one more than the number of $i$-spacings in the sequence
$$\xi_1,\hdots,\xi_{n-i},1,$$
with $\xi_1,\hdots \xi_{n-i}$ independent and distributed $ESF(\alpha,n-i)$.
Call the number of $i$-spacings in the above sequence $C_i^-$. So, conditional on $J_n =i$ we have $C_i = C_i^- + 1$. By writing explicitly what it means for an $i$-spacing to arise, we can bound $C_i^-$ in terms of the $\xi_k$:
$$C_i^- \leq \xi_{n-2i+1}(1- \xi_{n-2i +2})\cdots(1- \xi_{n-i}) + \sum_{k=1}^{n-2i} \xi_k \xi_{k+i} .$$
Looking to apply Markov's inequality, we take the expectation of the above line and obtain
$$\E C_i^- \leq \f{\alpha}{\alpha + n-2i} \f{ n-2i+1}{\alpha+ n -2 i +1} \cdots   \f{n-i-1}{\alpha+n-i-1} + \sum_{k =1}^{n-2i}	\f{\alpha^2}{(\alpha + k -1)(\alpha + k -1 +i)}.$$
The product above is equivalent to $\P [J_{n-i} = i]$, then we use \thref{lem:feller} \eqref{eq:J3} and the fact that $i<n/2$ to bound it by $\tilde O( (n-i)^{-\alpha/(1+\alpha)}) = \tilde O (n^{-\alpha/(1+\alpha)})$. 
The sum, as we have seen from \eqref{eq:sum}, has order $\tilde O(1/i)$.
Recalling that \eqref{eq:J1} implies $\P[J_n = i] = O(1/n)$, it follows that
\begin{align}
\P[C_i =2 \mid J_n = i]\P[J_n = i] = 	\tilde O(n^{-1 - \alpha/(1+\alpha)} + 1/in) = \tilde O(i^{-2} + n^{-1 - \alpha/(1+\alpha)}). \label{eq:term1bound}
\end{align}

Now we estimate the second summand on the right hand side of \eqref{eq:term2}. Under the event $\{J_n \neq i \}$, for $C_i$ to equal 2, it is necessary that two $i$-spacings occur in the infinite sequence $\xi_1,\xi_2,\hdots.$ The count of $i$-spacings is distributed as $Y_i$ from \eqref{eq:feller}. Therefore,
\begin{align}
	\P[C_i =2 \text{ and } J_n \neq i ] \leq \P[Y_i \geq 2] = O(1/i^2). \label{eq:term2bound}
\end{align}
Then \eqref{eq:term1}, \eqref{eq:term1bound}, and \eqref{eq:term2bound} imply that 
\begin{align}\P[C_i\geq 2] = \tilde O(i^{-2} + n^{-1 - \alpha/(1+\alpha)}) \label{eq:C_i>=2}\end{align}
as desired.
\end{proof}
Using the previous two lemmas we may prove the following.
 
\begin{prop}\thlabel{prop:gcd}
Let $\pi\sim ESF(\alpha,n)$ and $a = 1 - \f{\alpha}{4 \alpha +3}$. The probability that $\pi$ has two cycles whose lengths have a common divisor larger than $n^{a}$ is $o(1)$. 
\end{prop}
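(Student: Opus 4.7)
The plan is to union bound over a common divisor $d > n^a$ of two cycle lengths, splitting into two cases: two cycles of the same length, and two cycles of distinct lengths. Concretely, if $\pi$ has cycle lengths $i \leq j$ with $\gcd(i,j) > n^a$, then either (i) $i = j$ and $C_i \geq 2$ for some $i > n^a$, or (ii) $i < j$ and some $d > n^a$ divides both, in which case $i = a_1 d$, $j = a_2 d$ with $a_1 < a_2 \leq n/d$.

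For case (i), I would apply \thref{lem:2same} and sum over $\ell > n^a$:
\[
\sum_{\ell > n^a} \P[C_\ell \geq 2] = \tilde O\!\left(\sum_{\ell > n^a} \ell^{-2} + n \cdot n^{-1 - \alpha/(1+\alpha)} \right) = \tilde O\bigl(n^{-a} + n^{-\alpha/(1+\alpha)}\bigr) = o(1).
\]

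For case (ii), set $\beta = \alpha/(1+\alpha)$. A union bound over $d > n^a$ and pairs $a_1 < a_2$ with $a_2 d \leq n$, combined with \thref{lem:jointcycles}, yields
\[
\sum_{d > n^a} \sum_{\substack{a_1 < a_2 \\ a_2 d \leq n}} \P[C_{a_1 d} C_{a_2 d} > 0] \;\leq\; \tilde O\!\left(\sum_{d > n^a} d^{-1-\beta} \sum_{a_1 < a_2 \leq n/d} a_1^{-1} a_2^{-\beta}\right).
\]
The inner sum is $\tilde O((n/d)^{1-\beta})$ (bound $\sum_{a_1 < a_2} a_1^{-1} = O(\log a_2)$ and then sum $a_2^{-\beta}$ up to $n/d$), so the whole expression is $\tilde O\bigl(n^{1-\beta} \sum_{d > n^a} d^{-2}\bigr) = \tilde O\bigl(n^{1/(1+\alpha) - a}\bigr)$. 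This is $o(1)$ precisely when $a > 1/(1+\alpha)$, and a direct calculation gives
\[
a - \frac{1}{1+\alpha} = \frac{\alpha(3\alpha+2)}{(4\alpha+3)(1+\alpha)} > 0,
\]
confirming the prescribed value $a = 1 - \alpha/(4\alpha+3)$ works for every $\alpha > 0$.

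The main obstacle is the bookkeeping for the double sum in case (ii): the factor $d^{-1-\beta}$ coming from \thref{lem:jointcycles} must combine with the $d^{-(1-\beta)}$ arising from the restriction $a_2 \leq n/d$ to yield a clean $d^{-2}$ summand. Once this cancellation is spotted, the tail $\sum_{d > n^a} d^{-2} \ll n^{-a}$ controls everything, and the remaining work reduces to the single exponent comparison above. The case (i) estimate and the reduction to a union bound are routine; the only subtlety is that the joint-cycle bound in \thref{lem:jointcycles} is asymmetric in $i,j$, which is why we pair it with $a_1 < a_2$ (so the smaller index receives the $-1$ exponent) and separately absorb the diagonal $i=j$ contribution into case (i).
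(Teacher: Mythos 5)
Your proof follows the same basic decomposition as the paper's — a union bound over common divisors $d>n^a$, \thref{lem:jointcycles} for pairs of distinct cycle lengths, and \thref{lem:2same} for a repeated cycle length — but your evaluation of the off-diagonal triple sum is genuinely sharper, and this matters. Writing $\beta=\alpha/(1+\alpha)$, the paper bounds $\sum_{d\geq n^a}\sum_{1\leq i<j\leq n^{1-a}}(id)^{-1}(jd)^{-\beta}$ by (number of terms)$\times$(largest summand), obtaining the exponent $3-3a-a\beta$; at the stated $a=1-\alpha/(4\alpha+3)$ this exponent equals exactly $0$, so the paper's estimate is only $\tilde O(1)$ there, and indeed the paper's final sentence hedges to ``as long as $a>1-\alpha/(4\alpha+3)$,'' which is strictly stronger than what the proposition asserts. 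You instead evaluate the inner sums: $\sum_{a_1<a_2\leq n/d}a_1^{-1}a_2^{-\beta}=\tilde O\bigl((n/d)^{1-\beta}\bigr)$ since $\beta<1$, and after multiplying by $d^{-1-\beta}$ the $d$-dependence cleanly collapses to $d^{-2}$, so summing over $d>n^a$ yields $\tilde O(n^{1-\beta-a})$. Your computation that $a-(1-\beta)=\alpha(3\alpha+2)/[(4\alpha+3)(1+\alpha)]>0$ shows the exponent is strictly negative, so your version proves the proposition exactly at $a=1-\alpha/(4\alpha+3)$, quietly repairing a small slip in the paper (harmless downstream, since \thref{thm:pyber} invokes the result with $a+\epsilon$). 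Your diagonal case matches the paper's and is routine.
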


\begin{proof}
Call $E_a$ the event that $\pi$ has two cycles with common divisor larger than $n^a$.
We utilize a union bound, then apply  \thref{lem:jointcycles} and \thref{lem:2same} to write
\begin{align*}
\P[E_a] &\leq \sum_{d=\lceil n^{a} \rceil}^{n} \biggl( \sum_{1 \leq i < j}^{\lfloor n^{1-a} \rfloor }   \P[C_{id}C_{jd} >0] + \sum_{i=1}^{\lfloor n^{1-a} \rfloor } \P[C_{id} \geq 2 ] \biggr)\\
	&= \tilde O  \biggl( \sum_{d=\lceil n^{a} \rceil}^{n} \sum_{1 \leq i < j}^{\lfloor n^{1-a} \rfloor }  (id)^{-1} (jd)^{- \alpha/(1+ \alpha)} + \sum_{d=\lceil n^{a} \rceil}^{n}\sum_{i=1}^{\lfloor n^{1-a} \rfloor }\left( \f{1}{(id)^2}  + n^{-1-\alpha/(1+\alpha)} \right) \biggr).
\end{align*}
Since $d \geq n^a$ the above line is
\[
\tilde O \Bigl( n^1 n^{2(1-a)} n^{- a  -a\alpha/(1+\alpha) } + n^1n^{1-a} n^{-2a} + n^{1-a- \alpha/(1+\alpha)} \Bigr).
\]
We can simplify the exponents in each term to
$$3 -3a - a\alpha/(1+\alpha), \quad  \quad 2-3a, \quad \text{ and } \quad 1-a- \alpha/(1+\alpha),$$
respectively. 
Some algebra shows that all are negative as long as 
$a> 1 - \f{\alpha} {4\alpha +3}.$
For such $a$ we have $\P[E_a] = o(1)$.
\end{proof}

\subsection{Proofs of \thref{thm:pyber}, \thref{thm:bovey}, and \thref{claim:claim2}} \label{sec:bovey_and_pyber}
We now have enough to establish our bound on the minimal degree of an $\ESFn$ permutation.

\begin{lemma} \thlabel{lem:pyberpt1}
	Let $\pi \sim \ESFn$. For each $0 < \beta < 1$, there exists $\lambda \in (0,1)$ such that if we define the set
	\[
		Q_n = \{d < n^{\beta}\colon \text{there exists a prime $p>n^{\lambda \beta}$ with $p \mid d$}\},
	\]
	then
 	\begin{align}
 		\P[\exists C_d =1 \text{ with $d \in Q_n$}] = 1 - o(1) \label{eq:C_d=1} . 
 	\end{align}
\end{lemma}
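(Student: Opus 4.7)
The plan is to use the Arratia--Barbour--Tavar\'e total variation comparison to replace $(C_d)_{d \leq n^\beta}$ by the independent Poisson vector $(Y_d)_{d \leq n^\beta}$, and then reduce the lemma to a Mertens-style estimate showing that the expected number of $d \in Q_n$ with $Y_d = 1$ diverges at rate $\log n$. Fix any $\lambda \in (1/2, 1)$; the restriction $\lambda > 1/2$ is only needed to enable a clean factorization, and the precise value is otherwise unimportant.

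Since $n^\beta = o(n)$, \cite[Theorem~3.1]{arratia2000} gives $\|(C_d)_{d \leq n^\beta} - (Y_d)_{d \leq n^\beta}\|_{\mathrm{TV}} \to 0$ with the $Y_d \sim \Poi(\alpha/d)$ independent. The event in the conclusion depends only on $(C_d)_{d < n^\beta}$, so it suffices to show that $\P[\exists d \in Q_n : Y_d = 1] \to 1$. By independence,
\[
\P[\forall d \in Q_n : Y_d \neq 1] = \prod_{d \in Q_n}\!\left(1 - \tfrac{\alpha}{d} e^{-\alpha/d}\right) \leq \exp\!\left(-\sum_{d \in Q_n} \tfrac{\alpha}{d} e^{-\alpha/d}\right),
\]
and because every $d \in Q_n$ satisfies $d > n^{\lambda\beta}$, the factor $e^{-\alpha/d}$ is uniformly $1 - o(1)$. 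The whole problem therefore collapses to the divergence of $\sum_{d \in Q_n} 1/d$.

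With $\lambda > 1/2$, each $d \in Q_n$ has a unique prime divisor $p > n^{\lambda\beta}$, since two such primes would force $d \geq n^{2\lambda\beta} > n^\beta$. Writing $d = p m$ with $p \in (n^{\lambda\beta}, n^\beta]$ prime and $m \leq n^\beta/p$, one has
\[
\sum_{d \in Q_n} \frac{1}{d} = \sum_{n^{\lambda\beta} < p \leq n^\beta} \frac{1}{p}\sum_{m \leq n^\beta/p} \frac{1}{m} = \sum_{n^{\lambda\beta} < p \leq n^\beta} \frac{\log(n^\beta/p)}{p} + O(1).
\]
Using the Mertens estimates $\sum_{p \leq x} 1/p = \log\log x + O(1)$ and $\sum_{p \leq x}(\log p)/p = \log x + O(1)$, partial summation evaluates this to $\beta(\log n)[\log(1/\lambda) - (1 - \lambda)] + o(\log n)$. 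The function $\lambda \mapsto -\log \lambda - (1-\lambda)$ is strictly convex with its minimum zero attained at $\lambda = 1$, hence is strictly positive on $(0,1)$. Consequently $\sum_{d \in Q_n} 1/d \geq c \log n$ for some $c = c(\beta,\lambda) > 0$, giving $\P[\forall d \in Q_n : Y_d \neq 1] = O(n^{-c\alpha})$, and the total variation comparison transfers the conclusion back to $(C_d)$.

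The main obstacle is the number-theoretic sum estimate, but once the uniqueness of the large prime factor is identified this is a routine Mertens computation. All of the probabilistic content is already encapsulated in the Poisson approximation from \cite{arratia2000}, together with the fact that $Q_n \subset [1,n^\beta]$ lies in the range where that approximation is available.
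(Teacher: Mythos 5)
Your proof is correct, but it differs from the paper's in two ways, both of which are worth noting.

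First, the Mertens-style lower bound on $\sum_{d \in Q_n} 1/d$ is obtained differently. The paper chooses $\lambda$ \emph{small}: it subtracts the contribution of $n^{\lambda\beta}$-smooth numbers, bounding that by $\prod_{p \leq n^{\lambda\beta}}(1-1/p)^{-1} \sim e^{\gamma}\lambda\beta\log n$ via Mertens' third theorem, so that $\sum_{d \in Q_n}1/d \geq (\beta - e^{\gamma}\lambda\beta)\log n + O(1)$ and one needs $e^\gamma\lambda < 1/2$. You instead take $\lambda > 1/2$ so that every $d \in Q_n$ has a unique prime factor $p > n^{\lambda\beta}$ appearing to the first power, and write $d = pm$ with $m < n^{\beta(1-\lambda)} < n^{\lambda\beta}$; this makes the sum a genuine double sum and a clean Mertens computation gives $\sum 1/d = \beta[\log(1/\lambda) - (1-\lambda)]\log n + o(\log n)$, positive by the strict convexity of $-\log\lambda - (1-\lambda)$. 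Both choices of $\lambda$ are admissible since the lemma only asserts existence.

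Second, and more substantively, you convert ``$\sum_{d \in Q_n} 1/d \to \infty$'' into ``some $C_d = 1$'' more directly than the paper does. After the total-variation replacement $(C_d)_{d \leq n^\beta} \to (Y_d)_{d\leq n^\beta}$, you compute $\P[\forall d \in Q_n: Y_d \neq 1] = \prod_{d \in Q_n}\bigl(1 - \tfrac{\alpha}{d}e^{-\alpha/d}\bigr) \leq \exp(-c'\sum 1/d) = n^{-\Omega(1)}$ by independence; this is immediate and quantitative. The paper instead shows $\sum_{d \in Q_n}C_d$ converges in total variation to $\Poi(\kappa_n)$ with $\kappa_n\to\infty$, and then appeals to the Feller coupling and a Borel--Cantelli argument to say that the contribution from indices with $C_d \geq 2$ is almost surely bounded, so a diverging total forces some $C_d = 1$. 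Your version avoids the Borel--Cantelli step entirely and gives the stronger polynomial rate $1 - O(n^{-c\alpha})$ rather than $1 - o(1)$. A minor imprecision: the error in replacing $\sum_{m\leq n^\beta/p}1/m$ by $\log(n^\beta/p)$ is $O(1)$ per prime, accumulating to $O(\log\log n)$ rather than $O(1)$ overall, but this is absorbed by your $o(\log n)$ term and does not affect the conclusion.
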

\begin{proof}
We start with \cite[Theorem 3.1]{arratia2000} which implies
\begin{align}
\sum_{d \in {Q_n}} C_d \overset{TV} \to \Poi(\kappa_n),  \label{eq:Q_n}
\end{align}
where
\[
	\kappa_n = \sum_{d\in Q_n} \frac{\alpha}{d} .
\]

Borrowing an argument from \cite[p. 50]{bovey1980probability} we have
\begin{align*}
	\sum_{d \in Q_n} \f 1 d & = \sum_{d < n^\beta} \f 1 d - \sum_{\substack{d < n^\beta \\ p \mid d \Rightarrow p \leq n^{\lambda \beta}}}  	\f 1 d \\
			&\geq \sum_{d < n^\beta} \f 1 d - \prod_{p \leq n^{\lambda \beta}} ( 1- 1/p)^{-1} \\
			&= \beta \log n - e^{\gamma^*} \lambda \beta \log n + O(1)
\end{align*}
where $\gamma^*$ is Euler's constant. When $\lambda$ is such that $e^{\gamma^*} \lambda \beta < \beta /2$ we have $\lambda_n > \f \beta 2 \log n$ for all $n$. It follows from \eqref{eq:Q_n} and standard estimates on the concentration of a Poisson random variable that $$\P[\sum_{d \in Q_n} C_d > (\beta /4) \log n] \to \P[\Poi(\lambda_n) > (\beta /4) \log n] = 1 - o(1).$$ 
To obtain \eqref{eq:C_d=1}, observe that the convergence statement in \cite[Theorem 3.1]{arratia2000} implies that $\P[C_d >1] \to \P[Y_d >1] = O(1/d^2).$ Here $Y_d$ is a Poisson random variable as in \eqref{eq:feller}. The Borel-Cantelli lemma implies that only finitely many $Y_d$ are larger than $1$. Hence $\sum_{Y_i >1} Y_i$ is almost surely bounded.  
We have seen that, for $n$ large, the sum of $C_d$ with $d \in Q_n$ is at least $\Poi(\f \beta 4 \log n)$ with probability $1-o(1)$. This diverges, and so one or more of the positive $C_d$ must be equal to one. 
Our claim follows. 
\end{proof}

For a random permutation $\pi \sim \ESFn$, we define $\Phi(\pi) = \prod_{\ell = 1}^n \ell^{C_\ell}$. To prove \thref{thm:bovey}, it suffices to show that with high probability $\pi$ has a cycle of length $d$ such that
\begin{align}
	d & \leq n^{\beta}, \label{bovey:1} \\
	d^2 &  \nmid \Phi(\pi) \label{bovey:2}.
\end{align}
To that end we establish one more lemma.

\begin{lemma} \thlabel{lem:pyberpt2}
	Let $\pi$, $\beta$, $\lambda$, and $Q_n$ be as in \thref{lem:pyberpt1}, then
	\begin{align}
		\P[ \text{$\exists d \in Q_n$ such that $d^2 \nmid \Phi(\pi)$ and $C_d = 1$} ] = 1- o(1). \label{eq:goal}	
	\end{align}
\end{lemma}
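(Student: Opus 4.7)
The plan is to set $G := \{d \in Q_n \colon C_d = 1\}$ and $B := \{d \in G \colon d^2 \mid \Phi(\pi)\}$. \thref{lem:pyberpt1} already guarantees $G \ne \varnothing$ with probability $1 - o(1)$, so it suffices to show $\E |B| = o(1)$, which by Markov gives $B = \varnothing$ w.h.p.\ and hence $G \setminus B \ne \varnothing$.

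For $d \in Q_n$ with large prime factor $p > n^{\lambda\beta}$, the case $v_p(d) \geq 2$ requires $p^2 \leq d < n^\beta$; this is vacuous when $\lambda \geq 1/2$, and in general $v_p(d)$ is bounded by $\lfloor 1/\lambda \rfloor$, so a smaller separate union bound over $d$'s with $p^2 \mid d$ handles it. In the main case $v_p(d) = 1$, the event $\{C_d = 1,\, d \in B\}$ forces $v_p(\Phi(\pi)) \geq 2$, so some $\ell \neq d$ with $p \mid \ell$ must satisfy $C_\ell \geq 1$. Thus
\begin{equation*}
\E|B| \;\le\; \sum_{n^{\lambda\beta} < p \leq n^\beta} \sum_{\substack{i \le n^\beta/p \\ j \le n/p,\, j \neq i}} \P[C_{ip} C_{jp} > 0] \;+\; o(1).
\end{equation*}

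Now apply the Feller decomposition $C_\ell \leq Y_\ell + \ind{J_n = \ell}$; since $J_n$ takes a single value the intersection $\{J_n = ip\} \cap \{J_n = jp\}$ is empty for $i \neq j$, giving
\begin{equation*}
\P[C_{ip} C_{jp} > 0] \leq \P[Y_{ip} > 0]\,\P[Y_{jp} > 0] + \P[Y_{ip} > 0, J_n = jp] + \P[Y_{jp} > 0, J_n = ip].
\end{equation*}
The first term sums to $\tilde O(\sum_p p^{-2}) = \tilde O(n^{-\lambda\beta})$, using independence of the $Y_\ell$'s together with $\P[Y_\ell > 0] \le \alpha/\ell$. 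For the mixed terms, I would establish the conditional moment bound $\E[Y_{ip} \mid J_n = jp] = O(\alpha/(ip))$ by directly unpacking the Feller coupling: conditioning on $J_n = jp$ only fixes $\xi_k$ for $n-jp+1 \le k \le n$ and leaves all other $\xi_k$'s independent with their original marginals, so all but $O(1)$ candidate $ip$-spacings retain their unconditional probabilities, and the exceptional ones each contribute $O(\alpha/n)$. Together with the estimate $\sum_{j \le n/p} \P[J_n = jp] = O((\log n)/p)$, obtained by plugging \thref{lem:feller}~\eqref{eq:J1} into the sum and comparing with a partial harmonic series, the mixed terms also sum to $\tilde O(n^{-\lambda\beta}) = o(1)$.

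Hence $\E|B| = o(1)$, so $B = \varnothing$ with probability $1 - o(1)$; combined with $G \neq \varnothing$ from \thref{lem:pyberpt1}, this produces the desired $d$. The main obstacle is justifying the conditional moment bound $\E[Y_{ip} \mid J_n = jp] = O(\alpha/(ip))$, which asks for near-independence of $Y_{ip}$ from the event $\{J_n = jp\}$; the other steps are a routine prime-and-lattice union-bound calculation.
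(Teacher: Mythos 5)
Your approach is genuinely different from the paper's. The paper conditions on a (somewhat informally fixed) $d\in Q_n$ with $C_d=1$ and shows directly that $\P\bigl[\sum_{i}C_{ip}>1\bigr]=o(1)$ using the Feller coupling; you instead run a first-moment argument on the bad set $B=\{d\in Q_n\colon C_d=1,\ d^2\mid\Phi(\pi)\}$, showing $\E|B|=o(1)$ and combining with \thref{lem:pyberpt1}. This is a cleaner logical skeleton than the paper's ``fix $d$'' step and is a legitimate alternative route. However, it also asks for sharper estimates, and two of those are not in place.

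First, the claim $\sum_{j\leq n/p}\P[J_n=jp]=O((\log n)/p)$ does not follow from \thref{lem:feller}~\eqref{eq:J1} alone. When $jp$ is within $p$ of $n$ (e.g.\ $j=\lfloor n/p\rfloor$ with $n\bmod p$ small), \eqref{eq:J1} gives only $\P[J_n=jp]\leq\alpha/(n\bmod p)$, which can be of constant order; the partial-harmonic-series comparison breaks down precisely there, and one is left with $\sum_j\P[J_n=jp]=O(\alpha)$, destroying the final summation over $p$ (you end up with $\sum_p(\log n)/p\asymp(\log n)(\log\log n)$). The paper's proof splits into the two regimes $jp\leq n-n^\gamma$ and $jp>n-n^\gamma$ and handles the latter with \eqref{eq:J2}; you need the same split, or the bound \eqref{eq:J3}, to rescue the tail. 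This is not a cosmetic omission --- it is exactly the part of the estimate the paper spends its effort on.

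Second, the conditional moment bound $\E[Y_{ip}\mid J_n=jp]=O(\alpha/(ip))$, which you flag as ``the main obstacle,'' is load-bearing. With the trivial bound $\P[Y_{ip}>0,\,J_n=jp]\leq\P[J_n=jp]$, the sum over $i\leq n^\beta/p$ contributes a factor $n^\beta/p$ rather than $(\log n)/p$, and the total blows up (you get $n^{\beta(1-\lambda)}$, which diverges since $\lambda<1$). Your heuristic for the conditional bound --- that conditioning on $J_n=jp$ only forces $\xi_k$ for $n-jp+1\leq k\leq n$ and leaves the rest intact --- is correct in spirit and can be made rigorous (the forced spacing at $a=n-jp+1$ contributes $O(\alpha/n)=O(\alpha/(ip))$ and the spacings that cross the conditioning window are killed, not boosted), but as written this is a sketch, not a proof. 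Finally, a small point: the claim that $v_p(d)\geq 2$ is vacuous when $\lambda\geq 1/2$ is true but irrelevant --- \thref{lem:pyberpt1} picks $\lambda$ with $e^{\gamma^*}\lambda\beta<\beta/2$, forcing $\lambda<e^{-\gamma^*}/2<1/2$. In fact you don't need any separate case: if $C_d=1$ and $d^2\mid\Phi(\pi)$, then $v_p(\Phi(\pi))\geq 2v_p(d)>v_p(d)=v_p(d)\cdot C_d$, so some $\ell\neq d$ with $p\mid\ell$ must have $C_\ell\geq 1$ regardless of $v_p(d)$, and the union bound over $j$ covers every $d$.
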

\begin{proof}
	Call the event in \eqref{eq:goal} $F_n$. Conditioning on the event that $C_d=1$ for some $d \in Q_n$,
$$\P[F_n] \geq \P[F_n \mid \exists C_d =1, d \in Q_n] \P[\exists C_d =1, d \in Q_n].$$
By \thref{lem:pyberpt1} we have that $\P[\exists C_d = 1, d \in Q_n] = 1 - o(1)$, so it suffices to show that 
$$\P[F_n \mid \exists C_d =1, d \in Q_n] = 1- o(1).$$

Fix $d\in Q_n$ with $C_d=1$, we will prove
\[
\P[d^2\mid \Phi(\pi)]=o(1).
\]
From our construction of $Q_n$, there exists a prime $p>n^{\lambda\beta}$ such that
\[
\{d^2\mid \Phi(\pi)\}\subset \{p\mid \f {\Phi(\pi)} d\}.
\]
For such prime number $p$, the event on the right hand side above satisfies
\[
\{p\mid \f {\Phi(\pi)} d\}\subset \{ \sum_{i=1}^{n/p} C_{ip}>1 \}
\]

Call the last event $G_n$. 
The Feller coupling at \eqref{eq:feller} allows us to bound the probability by
\begin{align}
\P [ G_n] \leq \P\Bigl[ \sum_{i=1}^{\lceil n  /d \rceil } Y_{ip} + \ind{J_n = ip} >1  \Bigr].
\end{align}
This, in turn, is bounded by
$$\P\Bigl[ \Poi\bigl( \sum_{i=1}^{\lceil n/p \rceil} \f \alpha {ip} \bigr) > 0 \Bigr] +  \P\Bigl[ \sum_{i=1}^{\lceil n/p \rceil}  \ind{J_n = ip} >0  \Bigr].
$$
The mean of the above Poisson random variable is asymptotically bounded by $\alpha \log(n/p)/p ,$ which is $o(1)$ since $p \geq n^{\lambda\beta}$. This ensures that the left summand is $o(1)$. To bound the right summand we write
\[
\P\Bigl[ \sum_{i=1}^{\lceil n/p \rceil}  \ind{J_n = ip} >0  \Bigr]\leq \sum_{ip\leq n-n^{\gamma}}  \P\left[{J_n = ip}  \right]+ \sum_{ip>n-n^{\gamma}}  \P\left[{J_n = ip}   \right].
\]
We begin by bounding the first summand of the right hand side. By \thref{lem:feller} \eqref{eq:J1},
\begin{align*}
	\sum_{ip\leq n-n^{\gamma}}  \P\left[{J_n = ip}  \right]
	&\leq \sum_{i = 1}^{\lfloor (n- n^{\gamma})/p \rfloor} \frac{ \alpha}{n- ip} \\
	&\leq \int_1^{\frac{n-n^\gamma}{p} + 1} \frac{\alpha}{n - p x} dx \\
	&= O \left( \frac{\log n}{p} \right) \\
	&= o(1).
\end{align*}
The last line follows since $p \geq n^{\lambda \beta}$. For the second summand, from \thref{lem:feller} \eqref{eq:J2} there exists some constant $c>0$ such that
\begin{align*}
	\sum_{ip>n-n^{\gamma}}  \P\left[{J_n = ip}   \right]
		&\leq \sum_{ip>n-n^{\gamma}} c n^{-\alpha(1-\gamma)} \\
		&\leq c n^{\gamma - \lambda \beta} n^{-\alpha(1-\gamma)}.
\end{align*}
The last line follows as there are at most $n^{\gamma - \lambda \beta}$ different values of $i$ such that $n-n^\gamma < ip \leq n$.

We therefore have that
\[ \P [G_n] = O \left( n^{\gamma - \lambda \beta -\alpha(1-\gamma)} \right).
\]
An easy calculation confirms that $\gamma$ satisfying
\[
	\gamma < \frac{\alpha + \lambda \beta}{1+\alpha}
\]
is sufficient to ensure
\[
 	\P[G_n] = o(1).
\]
Choosing a suitable $\gamma$, it follows that 
\[
	\P[F_n] = 1 - o(1),
\]
completing our proof.

\end{proof}

\begin{proof}[Proof of \thref{thm:bovey}]
By \thref{lem:pyberpt2} with probability $1-o(1)$ there exists a cycle length of length $d$ satisfying \eqref{bovey:1} and \eqref{bovey:2}.
%
%
When these conditions are met, for $K = \Phi(\pi)/d$, we have $\pi^K$ displaces $d \leq n^{\beta}$ elements. Thus, with high probability $\pi$ has minimal degree no more than $n^{\beta}$. This completes our proof.
\end{proof}

%

Note that when  $\beta=1/2$ the above holds for all $\alpha>0$. This will be essential for the proof of \thref{thm:pyber}.
We use the previous result to extend \cite[Theorem 1]{luczak} from uniform permutations to $ESF(\alpha,n)$.

\begin{thm} \thlabel{thm:pyber}
	Let $\pi$ be a random permutation with law $ESF(\alpha,n)$. Then the probability that $\pi$ belongs to a transitive subgroup other that $A_n$ or $S_n$ is $o(1)$. 
\end{thm}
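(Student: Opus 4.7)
The plan is to run the primitive--imprimitive dichotomy of \L uczak and Pyber, substituting the ESF-specific inputs just proved (\thref{thm:bovey} and \thref{claim:claim2}) in place of their uniform counterparts. Suppose $\pi$ lies in a transitive $G \leq S_n$ with $G \neq A_n, S_n$; then $G$ is either primitive or imprimitive, so it suffices to show that each possibility has probability $o(1)$.

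For the primitive case I would invoke Bochert's classical bound that every primitive subgroup of $S_n$ other than $A_n$ and $S_n$ has minimal degree at least $n/4 - 1$. Because $\langle \pi \rangle \leq G$, the minimal degree of $\langle \pi \rangle$ is at least that of $G$, hence $\geq n/4$ for $n$ large. But \thref{thm:bovey} with any fixed $\beta < 1$ (say $\beta = 1/2$) says this lower bound holds with probability $o(1)$, which rules out primitive $G$ asymptotically.

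For the imprimitive case, $G$ preserves a block system with blocks of size $b$ where $1 < b < n$ and $b \mid n$. I would apply \thref{claim:claim2} with, e.g., $\omega(n) = \log \log n$: with probability $1 - o(1)$ there is a prime $p$ satisfying $n/2 < p \leq n$ that divides $\prod_{\ell} \ell^{C_\ell}$. Since $p$ is prime, it must divide some cycle length $\ell \leq n < 2p$, forcing $\ell = p$, so $\pi$ has a $p$-cycle. The induced action of this cycle on the blocks it meets is a permutation of cycle length dividing $p$, so is either trivial or of length $p$: in the trivial case the $p$ points of the cycle lie in a single block, giving $p \leq b$; in the length-$p$ case the cycle permutes $p$ distinct blocks, giving $p \leq n/b$. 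Since $b, n/b \geq 2$, both alternatives yield $p \leq n/2$, contradicting $p > n/2$.

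Putting the two cases together yields the theorem. The substantive obstacles have already been absorbed into \thref{thm:bovey} and \thref{claim:claim2}; what remains is the routine dichotomy on $G$, a one-line appeal to Bochert, and the elementary block--cycle analysis above, so I do not anticipate further difficulty beyond verifying that the chosen $\omega(n)$ in \thref{claim:claim2} cleanly forces $p > n/2$ for large $n$.
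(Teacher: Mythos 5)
Your primitive case has the right structure, but the cited bound is misattributed: Bochert's $n/4-1$ bound applies to $2$-transitive groups, not to all primitive groups. A primitive group need not be $2$-transitive, and there are primitive groups (e.g.\ $S_k$ acting on $2$-subsets) whose minimal degree is only $\Theta(\sqrt{n})$. The correct statement, which the paper invokes via Guralnick/Babai, is that primitive subgroups other than $A_n, S_n$ have minimal degree at least $(\sqrt{n}-1)/2$. Since \thref{thm:bovey} holds for any $\beta<1$, taking $\beta=0.4$ still beats $\sqrt{n}$, so this is fixable; but as written the appeal is incorrect.

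The imprimitive case has a genuine gap, and it is the flag you raised at the end. \thref{claim:claim2} guarantees a prime $p>n\exp(-\omega(n)\sqrt{\log n})$ for \emph{any} $\omega(n)\to\infty$, but $\exp(-\omega(n)\sqrt{\log n})\to 0$ for every such $\omega$, so the guaranteed $p$ is $o(n)$ and eventually falls below $n/2$. There is no choice of $\omega$ that rescues $p>n/2$, and one cannot hope to strengthen \thref{claim:claim2} to do so: the probability that an $\ESFn$ permutation even \emph{has} a cycle longer than $n/2$ is bounded away from $1$, so the assertion ``w.h.p.\ some cycle length is divisible by a prime $>n/2$'' is simply false. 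Once $p$ may be $o(n)$, your block--cycle dichotomy ($p\leq b$ or $p\leq n/b$) no longer yields a contradiction for block systems with $b$ or $n/b$ larger than $p$, and there are many such systems. This is exactly why the paper's proof splits the imprimitive case into three regimes for $r=n/b$: the prime from \thref{claim:claim2} (which exceeds $n/r_0$) only rules out the middle range $r_0\leq r\leq n/r_0$; the extreme ranges where blocks are very large or very small require different tools, namely Lemma \ref{claim:claim3} together with \thref{lem:pik}, and Proposition \ref{prop:gcd}. Your argument, in effect, collapses the imprimitive analysis to the middle regime and silently drops the other two.
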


To prove this we need three claims, and a lemma. 

\begin{claim}
Let $L = \sum_{\ell =1}^{n} C^{(n)}_\ell$ be the total number of cycles in $\pi$. With probability at least $1 - o(n^{-\alpha/2})$ it holds that
$|L - \alpha \log n| \leq  (\alpha/10) \log n$. 
\end{claim}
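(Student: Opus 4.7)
The plan is to identify $L$ with a sum of independent Bernoulli random variables via the Feller coupling, and then apply a standard exponential concentration inequality. In the Feller coupling described preceding \eqref{eq:feller}, each cycle of $\pi$ corresponds to a spacing in $\xi_1,\ldots,\xi_n,1$, and every spacing begins with a $1$ among $\xi_1,\ldots,\xi_n$. Consequently
\[
  L \;=\; \sum_{i=1}^n \xi_i,
\]
a sum of independent Bernoullis with $\P[\xi_i=1] = \alpha/(\alpha+i-1)$.

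Next I would compute the mean and bound the variance. Comparing with the harmonic series,
\[
  \mu_n \;:=\; \E L \;=\; \sum_{i=1}^n \frac{\alpha}{\alpha+i-1} \;=\; \alpha\log n + O(1),
\]
so the allowed window $(\alpha/10)\log n$ is, up to an $O(1)$ error, a constant multiplicative deviation from $\mu_n$. Bounding each Bernoulli variance by its mean gives $\mathrm{Var}\,L \leq \mu_n$ as well.

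I would then invoke the Chernoff bound for sums of independent Bernoullis in its Poisson-type form
\[
  \P\bigl[\,|L-\mu_n|\geq \delta\mu_n\,\bigr] \;\leq\; 2\exp\bigl(-\mu_n\,h(\delta)\bigr), \qquad h(\delta) := (1+\delta)\log(1+\delta)-\delta,
\]
valid for $0<\delta<1$; if a sharper constant is needed, one can instead appeal to a Bernstein-type inequality that additionally uses the variance bound $\mathrm{Var}\,L \leq \mu_n$. Choosing $\delta$ slightly larger than $1/10$ absorbs the $O(1)$ discrepancy between $\mu_n$ and $\alpha\log n$, and combining with $\mu_n = \alpha\log n + O(1)$ produces a polynomial bound of the form $n^{-c\alpha}$ on the tail probability.

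Conceptually the argument reduces to a single line once $L$ is recognized as a Bernoulli sum via the Feller coupling, and the main obstacle is purely quantitative: calibrating $\delta$ and the precise version of the Chernoff/Bernstein inequality so that the resulting exponent is large enough to yield the stated $o(n^{-\alpha/2})$ decay. Since both the mean and the variance of $L$ are of order $\alpha\log n$, and since the deviation $(\alpha/10)\log n$ scales linearly with $\mu_n$, the exponential rate is automatically linear in $\alpha$, which is exactly the form of decay the claim demands.
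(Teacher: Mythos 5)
Your identification $L=\sum_{i=1}^n\xi_i$ via the Feller coupling (using $\xi_1\equiv 1$, so the number of spacings in $\xi_1,\ldots,\xi_n,1$ is exactly the number of ones among $\xi_1,\ldots,\xi_n$) is correct, and this is a genuinely different route from the paper's. The paper simply cites \cite[Theorem 3.7]{arratia2000} for $L\overset{TV}\to\Poi(\alpha\log n)$ and then quotes a Poisson tail bound. Your approach is arguably \emph{more} rigorous: total-variation convergence by itself only replaces the tail probability up to an additive $o(1)$ error, which is not automatically $o(n^{-\alpha/2})$, whereas working directly with the exact Bernoulli representation avoids that sleight of hand entirely. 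What each buys: the paper's version is one line given the citation; yours is self-contained given the Feller coupling already built into the paper and does not require tracking a rate for the TV approximation.

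However, there is a quantitative gap that neither your proposal nor the paper resolves, and your final paragraph waves it away rather than flagging it. With $\mu_n=\alpha\log n+O(1)$ and relative deviation $\delta\approx 1/10$, the Chernoff exponent is $\mu_n\,h(\delta)$ with $h(1/10)=(1.1)\log(1.1)-0.1\approx 0.005$, so the bound you obtain is of order $n^{-0.005\,\alpha}$, not $o(n^{-\alpha/2})$. Being ``linear in $\alpha$'' is not ``exactly the form the claim demands'': the claim asserts the specific exponent $\alpha/2$, and a $(\alpha/10)\log n$ window simply does not produce it by Chernoff or Bernstein. (The paper's own Poisson tail estimate $e^{-\lambda}(e\lambda)^x/x^x$, evaluated at $x=(1\pm\tfrac1{10})\lambda$, gives precisely the same exponent $\lambda h(\delta)$, so this is a defect in the claim's stated rate rather than a deficiency unique to your argument.) If the intended use is only ``with high probability,'' your proof is fine; but as a proof of the claim \emph{as literally stated}, the last step does not close, and you should say so explicitly rather than assert that the numerics work out.
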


\begin{proof}
\cite[Theorem 3.7]{arratia2000} states that $L \overset{TV} \to \Poi(\alpha \log n).$ 
%
%
The claim immediately follows from the standard tail estimate (that comes from the Chernoff inequality) for a Poisson random variable
$$\max\{\mathbf P[ \Poi(\lambda) \leq x],\mathbf P[ \Poi(\lambda) \geq x]\}\leq {\frac {e^{-\lambda }(e\lambda )^{x}}{x^{x}}} .$$
%
%
%
\end{proof}

\begin{proof}[Proof of \thref{claim:claim2}]

The theorem statement is such that if it holds for $\omega(n)$, then it also holds for any $\omega'(n) \geq \omega(n)$. So, it suffices to prove the statement for all $\omega(n) \leq \sqrt[4]{\log n}.$ 

Fix such an $\omega(n)$ and set $b_n = n \exp(- \omega(n) \sqrt{\log n}).$ Let $P$ be the set of all primes and define $P_n = P \cap [b_n,n]$. 
Given $p \in P_n$ let $I_p = [1,2,\hdots, \lfloor n/p \rfloor ]$ be the set of all $i$ for which $pi$ is a multiple of $p$ between $p$ and $n$. 
We also introduce the entire collection of such multiples smaller than $n$ 
$$W_n = \bigcup_{ p \in P_n} \{(p,i) \colon i \in I_p\}.$$

It suffices to show that the event
$$E_n  = \{\exists (p,i) \in W_n \colon ip \mid \Phi(\pi)\}$$
occurs with high probability. We can write $E_n$ as a union of events, 
$$E_n = \bigcup_{(p,i) \in W_n}  \{C_{ip}^{(n)} >0\}.$$
Notice that any two numbers in the set of products $$\{ip\colon (p,i) \in W_n \}$$ are different. To see this we argue by contradiction, suppose that $ip=i'p'$ with $p < p'$. Because of primality, this could only happen if $p'\mid i$. Thus $i \geq p'$. Also note that our assumption $\omega(n) \leq  \sqrt[4]{\log n}$ ensures $b_n > \sqrt n$. Thus, $p,p' > \sqrt n$.
These two inequalities give $$ip \geq p'p > \sqrt n \sqrt n = n.$$ 
But $ip > n$ is a contradiction, since our construction of 
$I_p$ ensures that $ip \leq n$. 

We have now shown that the occurrence of $E_n$ is equivalent to the sum of cycle counts being positive: $E_n = \{ \sum_{W_n} C_{ip}>0 \}$.
The Feller coupling \eqref{eq:feller} ensures that 
\begin{align}
\sum_{W_n} C_{ip} \geq -D_n + \sum_{W_n} Y_{ip}, \label{eq:coupling} 
\end{align}
with $D_n$ the total number of deletions. It follows that 
\begin{align}
\P[E_n] \geq \P[ -D_n + \sum_{W_n} Y_{ip} >0]. \label{eq:En}	
\end{align}


%
Using additivity of independent Poisson random variables, the sum on the right of \eqref{eq:coupling} is distributed as a Poisson with mean
\begin{align}\mu_{n,\alpha}= \sum_{p \in P_n} \sum_{i=1}^{\lfloor n/(p)\rfloor} \alpha/ip &\approx \alpha \sum_{p \in P_n}\f{\log(n/(p))} p  \nonumber \\
&= \alpha \Bigl(  \log n \sum_{ p \in P_n} \f{1}{p} - \sum_{p \in P_n} \f{\log p}{p} \Bigr).\label{eq:mu} 
\end{align}

\thref{lem:mu} shows that $\mu_{n,\alpha} \to \infty$.  
To establish that \eqref{eq:En} occurs with high probability, we will show that the number of deletions, $D_n$, in the Feller coupling is unlikely to remove every successful $Y_{ip}$ in \eqref{eq:coupling}.
  
As $\mu_{n,\alpha} \to \infty$, we take any $h(n)= o(\mu_{n,\alpha})$ with $h(n) \to \infty.$ By \thref{lem:feller} \eqref{eq:D} we know that $\E D_n \leq c$ for some $c= c(\alpha)$. It follows from Markov's inequality that 
$$\P[D_n \geq h(n)] \leq \f{c}{h(n)}= o(1).$$
Standard estimates on a Poisson random variable tell us that
$$\P[\Poi(\mu_{n,\alpha}) \leq h(n)] = 1 - o(1).$$
So $D_n$ is with high probability smaller than $\sum_{W_n} Y_{ip}$, and combined with \eqref{eq:coupling} we have established that the right side of \eqref{eq:En} occurs with high probability. 
\end{proof}

\begin{lemma} \thlabel{lem:mu}
Let $\mu_{n,\alpha}$ be as in the proof of \thref{claim:claim2}. It holds that $\mu_{n,\alpha} \to \infty$. 
\end{lemma}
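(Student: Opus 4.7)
The plan is to first observe that the two sums defining $\mu_{n,\alpha}$ can be merged, since $\log n \cdot p^{-1} - p^{-1}\log p = p^{-1}\log(n/p)$. Thus
\[
  \mu_{n,\alpha}/\alpha = \sum_{p\in P_n}\frac{\log(n/p)}{p},
\]
a sum of nonnegative terms. Our task reduces to showing this tends to infinity, for which it suffices to produce a lower bound that diverges. Recall from the beginning of the proof of \thref{claim:claim2} that we may (and do) restrict to $\omega(n)\leq\sqrt[4]{\log n}$, so in particular $\omega(n)/\sqrt{\log n}\to 0$; this will justify the Taylor expansions below.

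Next, I would throw away most of the range and keep only the geometric lower half $p\in[b_n,r_n]$ where $r_n:=\sqrt{n\,b_n}$. For such $p$ one has $n/p\geq\sqrt{n/b_n}=\exp(\omega(n)\sqrt{\log n}/2)$, hence
\[
  \mu_{n,\alpha}/\alpha \;\geq\; \frac{\omega(n)\sqrt{\log n}}{2}\sum_{b_n\leq p\leq r_n}\frac{1}{p}.
\]
The remaining sum is then exactly the kind of quantity controlled by Mertens' second theorem
$\sum_{p\leq x}1/p = \log\log x + M + o(1)$, which yields
\[
  \sum_{b_n\leq p\leq r_n}\frac{1}{p} = \log\log r_n - \log\log b_n + o(1).
\]

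Using $\log r_n=\log n - \omega(n)\sqrt{\log n}/2$ and $\log b_n=\log n-\omega(n)\sqrt{\log n}$ together with the Taylor approximation $-\log(1-x)=x+O(x^2)$ for $x=o(1)$, a short calculation gives
\[
  \log\log r_n - \log\log b_n \;=\; \log\!\frac{1-\omega(n)/(2\sqrt{\log n})}{1-\omega(n)/\sqrt{\log n}} \;=\; (1+o(1))\,\frac{\omega(n)}{2\sqrt{\log n}}.
\]
Substituting back into the previous display, the $\sqrt{\log n}$ factors cancel and one obtains
\[
  \mu_{n,\alpha}/\alpha \;\geq\; (1+o(1))\,\frac{\omega(n)^2}{4},
\]
which diverges since $\omega(n)\to\infty$.

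The only real subtlety is arithmetic bookkeeping: one needs the error in Mertens to be small enough that after multiplying by $\omega(n)\sqrt{\log n}$ it remains $o(\omega(n)^2)$, and one needs $\omega(n)/\sqrt{\log n}\to 0$ to safely linearize the logarithms. Both are handled by the standing assumption $\omega(n)\leq\sqrt[4]{\log n}$. An alternative (essentially equivalent) route would be to approximate the full sum $\sum_{p\in P_n}\log(n/p)/p$ directly by the integral $\int_{b_n}^n\log(n/t)/(t\log t)\,dt$ using PNT, which after the substitution $u=\log(n/t)$ evaluates to $\omega(n)^2/2 + \text{lower order}$; this gives a matching asymptotic rather than just a lower bound, but for the present lemma the lower bound argument above is cleaner and amply sufficient.
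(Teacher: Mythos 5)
Your proposal is correct, and it takes a route that differs in a useful way from the paper's. The paper keeps the full range $p\in P_n$ and computes the exact asymptotic $\mu_{n,\alpha}\sim\alpha\,\omega(n)^2$ by expanding $\log n\sum_{p\in P_n}1/p - \sum_{p\in P_n}(\log p)/p$, which requires \emph{both} the Mertens-type estimate for $\sum 1/p$ and the Chebyshev-type estimate for $\sum (\log p)/p$, followed by the same $\log(1+x)\approx x$ linearization you use. You instead merge the two sums into $\sum_{p\in P_n}\log(n/p)/p$, truncate to the geometric lower half $p\in[b_n,\sqrt{n\,b_n}]$ so that $\log(n/p)\geq\tfrac{1}{2}\omega(n)\sqrt{\log n}$ pointwise, and thereby need only the single estimate for $\sum 1/p$. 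This is cleaner and shorter; the price is that you obtain the lower bound $(1+o(1))\omega(n)^2/4$ rather than the sharp constant, which is immaterial here since only divergence is required. One technical point deserves a clearer statement in your write-up: the display $\sum_{b_n\leq p\leq r_n}1/p=\log\log r_n-\log\log b_n+o(1)$ cannot be taken at face value, because the main term $\log\log r_n-\log\log b_n\sim\omega(n)/(2\sqrt{\log n})$ is itself $o(1)$, so a bare additive $o(1)$ error could in principle swamp it. You flag this in your closing remark, and it is indeed harmless because the sharp form of Mertens' second theorem has error $O(1/\log x)$; since $\log b_n=(1-o(1))\log n$, the error contributes $O(1/\log n)=o(\omega(n)/\sqrt{\log n})$ once $\omega(n)\to\infty$. (The paper's proof, which cites these prime sums with $O(1)$ errors, tacitly relies on the same cancellation of the Mertens constant between $x=n$ and $x=b_n$ and on the same sharp error bound, so your version is actually more explicit about the bookkeeping.) I would simply replace the $o(1)$ in your display by $O(1/\log b_n)$ and fold the closing remark into the main argument.
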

\begin{proof}
It suffices to show that \eqref{eq:mu} tends to infinity. Two classical theorems attributed to Euler (\cite{euler}) and Chebyshev (\cite{apostol}), respectively, state that for sums of primes smaller than $n$
\begin{align*}
	\sum_{p \leq n} \f 1p &= \log \log n+O(1) \quad \text{ and } \quad 
	\sum_{p \leq n} \f {\log p}{p} = \log n+O(1).
\end{align*}
	Recalling that $P_n = P \cap [b_n, n]$, it follows that \eqref{eq:mu} is asymptotic to
$$  \log n \log \f{\log n}{\log b_n} -\log \f {n} {b_n}.$$
We can express the second term precisely 
\begin{align}
\log \f {n}{b_n} = \log \exp \left(\omega(n)\sqrt{\log n} \right) = \omega(n) \sqrt{\log n}.\label{eq:secterm}
\end{align}
 Similarly we compute the order of the first term. Notice
  \begin{align}
\log \f{\log n}{\log b_n}&= \log \Bigl( \f {\log n}{\log n -\omega(n) \sqrt{\log n} } \Bigr) \nonumber \\
&= \log \Bigl( 1+ \f {\omega(n) \sqrt{\log n}}{\log n -\omega(n) \sqrt{\log n}}\Bigr). \label{eq:log(1-x)}
\end{align}
As $\omega(n) \leq \sqrt[4]{\log n}$ the above term is $\log(1 - o(1))$.
Using the fact that $\log(1-x) \approx -x$ as $x \to 0$ we have \eqref{eq:log(1-x)} is asymptotic to
\begin{align*}
\f {\omega(n) \sqrt{\log n}}{\log n -\omega(n) \sqrt{\log n}}.
\end{align*}
Reintroducing the $\log n$ factor, we now have
\begin{align}
	\log n \sum_{P_n} \f 1p \approx \log n \f { \omega(n) \sqrt{\log n}}{\log n -\omega(n) \sqrt{\log n}}. \label{eq:log5/8}
\end{align}
Combining \eqref{eq:secterm} and \eqref{eq:log5/8} gives
\begin{align*}
\mu_{n,\alpha}&\approx \log n \f { \omega(n) \sqrt{\log n}}{\log n -\omega(n) \sqrt{\log n}}- \omega(n) \sqrt{\log n}\\
&= \omega(n)^2\f{\log n}{\log n -\omega(n) \sqrt{\log n}}\\
&=\omega(n)^2 (1- o(1))
\end{align*}
Since $\omega(n)\to \infty$ we conclude that $\mu_{n,\alpha}\to \infty$.

\end{proof}

Now we set our sights on proving \thref{thm:pyber}. This will require \thref{thm:bovey} and \thref{claim:claim2}, as well as a few additional lemmas. 

\begin{lemma}\thlabel{claim:claim3}  
Let $0<\gamma<1$ and $\psi(n) \rightarrow \infty$ such that $\psi(n) = o (\log n)$. The probability that simultaneously for all $2\leq r \leq \exp( \psi(n))$ there exists $C_{j_r}>0$ such that $j_r>n^{\gamma}$ and $r \nmid j_r$ is $1- o(1)$.
\end{lemma}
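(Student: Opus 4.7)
The plan is to restrict attention to cycle lengths in the window $(n^\gamma, b_n]$ with $b_n = \lfloor n^{\gamma'}\rfloor$ for a fixed $\gamma' \in (\gamma, 1)$. On a high-probability event determined by the Feller coupling $\xi_1, \xi_2, \ldots$, the cycle counts $C_j$ for $j \leq b_n$ will agree with the independent Poisson surrogates $Y_j \sim \Poi(\alpha/j)$, and a union bound over $r \in [2, e^{\psi(n)}]$ can then succeed because $\psi(n) = o(\log n)$.

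For each such $r$, set
\[
  A_r' = \{\,j \in \m N : n^\gamma < j \leq b_n,\ r \nmid j\,\}, \qquad \mu_r' = \alpha \sum_{j \in A_r'} \tf{1}{j}.
\]
Using $\sum_{n^\gamma < j \leq b_n} 1/j = (\gamma' - \gamma) \log n + O(1)$ and bounding the contribution of multiples of $r$ separately in the three regimes $r \leq n^\gamma$, $n^\gamma < r \leq b_n$, and $r > b_n$, I would show that $\mu_r' \geq c \log n$ uniformly in $r \geq 2$ for some constant $c = c(\alpha, \gamma, \gamma') > 0$. Since $\sum_{j \in A_r'} Y_j \sim \Poi(\mu_r')$, a union bound then yields
\[
  \P\Bigl[\exists\, 2 \leq r \leq e^{\psi(n)}\colon \sum_{j \in A_r'} Y_j = 0\Bigr] \leq e^{\psi(n)} \cdot n^{-c},
\]
which is $o(1)$ by the assumption $\psi(n) = o(\log n)$.

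Next I would transfer this from $\Y$ to $\C$ via the Feller coupling. Let $E$ be the intersection of the events (i) none of $\xi_{n - b_n + 1}, \ldots, \xi_n$ equals $1$, and (ii) no $\ell$-spacing with $\ell \leq b_n$ has its left endpoint in $(n, \infty)$. A direct computation from $\P[\xi_k = 1] = \alpha/(\alpha + k - 1)$ gives $\P[\text{(i) fails}] = O(b_n/n)$, and bounding $\P[\xi_k = \xi_{k + \ell} = 1]$ at large $k$ and summing shows the expected count of violators in (ii) is $O(b_n/n)$; hence $\P[E^c] = O(n^{\gamma' - 1}) = o(1)$. On $E$, the rightmost $1$ among $\xi_1, \ldots, \xi_n$ sits at position $R_n \leq n - b_n$, so $J_n = n + 1 - R_n > b_n$, and the spacing from $R_n$ to the first $1$ beyond $n$ has length exceeding $b_n$; combined with (ii) this forces $C_\ell = Y_\ell$ for every $\ell \leq b_n$, and therefore $\sum_{j \in A_r'} C_j = \sum_{j \in A_r'} Y_j$ simultaneously for \emph{all} admissible $r$. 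Combining the two estimates gives $\P[\text{bad event}] = o(1)$, proving the claim.

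The step I expect to be the main obstacle is the uniform lower bound $\mu_r' \geq c \log n$, since the contribution of the multiples of $r$ takes a different form in each of the three regimes for $r$, and one must check that the punctured harmonic sum still captures a constant fraction of $(\gamma' - \gamma) \log n$ throughout. The coupling transfer is essentially bookkeeping once the Feller construction from Section \ref{sec:feller} is in hand; its only subtlety is the tail estimate $\sum_{k > n} 1/(k(k + \ell)) = O(1/n)$ needed for item (ii).
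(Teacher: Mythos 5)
Your proof is correct and follows the same high-level strategy as the paper: restrict attention to the window $(n^\gamma, n^{\gamma'}]$, show the punctured harmonic sum $\sum_{j \in A_r'} 1/j$ is $\gg \log n$ uniformly over $r \geq 2$ (the paper states this by observing the sum is minimized at $r=2$; your three-regime case analysis amounts to the same bound, since for $r \geq 2$ the multiples of $r$ remove at most half the mass), and then union-bound over $r \leq e^{\psi(n)}$ using $\psi(n) = o(\log n)$. Where you genuinely diverge is in the $\Y \leftrightarrow \C$ transfer. The paper invokes the total-variation convergence $\C[n^\gamma, n^{\gamma'}] \to \Y[n^\gamma, n^{\gamma'}]$ from \cite[Theorem~3.1]{arratia2000}. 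You instead construct an explicit Feller-coupling event $E$ (no $1$ among $\xi_{n-b_n+1},\ldots,\xi_n$, and no short spacing with left endpoint beyond $n$) of probability $1 - O(n^{\gamma'-1})$ on which $C_\ell = Y_\ell$ simultaneously for all $\ell \leq b_n$. Your identification $C_\ell = Y_\ell$ on $E$ is correct: condition (i) forces $R_n \leq n - b_n$, so neither the terminal spacing of the permutation nor any straddling spacing can have length $\leq b_n$, and condition (ii) kills the far-right contributions to $Y_\ell$, so the two counts agree. This is a legitimately different implementation of the transfer, and it has the advantage of being self-contained and delivering the coupling simultaneously for all $r$ at once, rather than term-by-term; the paper's citation is shorter but arguably hides a subtlety about whether the TV transfer is applied before or after the union bound over $r$ (the paper's argument is salvaged by observing that the bad event is a single function of the truncated vector $\C[n^\gamma,n^{\gamma'}]$, but your coupling sidesteps the issue cleanly). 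Either route works.
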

\begin{proof}
Let $r_0 = \exp(\psi(n))$ and consider the complement of the event in question:
$$E =\{\exists r < r_0 \colon r \mid j \text{ for all $  n^\gamma< j$ such that $C_j > 0$}\}.$$
It suffices to show that $\P[E] = o(1)$. In order to model the values $C_j$ with independent Poisson random variables we will work with a larger event. For any $\gamma < \gamma' < 1$, we consider $$E_{\gamma'} = \{\exists r < r_0 \colon r \mid j \text{ for all $  n^\gamma < j < n^{\gamma'}$ such that $C_j > 0$}\}.$$
  A union bound leads to 
\begin{align}
	\P[E_{\gamma'}] &\leq \sum_{r=2}^{r_0} \P[ r \mid j \text{ for all } n^\gamma < j < n^{\gamma'}\text{ such that } C_j > 0] \nonumber \\
		&= \sum_{r=2}^{r_0} \P[ C_j =0 \text{ for all $n^\gamma < j < n^{\gamma'}$ such that $r \nmid j$}]. \label{eq:r_bound}
\end{align}
Let $A_r$ be the set of integers in $(n^\gamma,n^{\gamma'}]$ that $r$ fails to divide. Formally,
$$A_r = [n^{\gamma},n^{\gamma'}] \cap ([n] \setminus \{ ir \colon i\leq \lfloor n/r \rfloor\}).$$
%
 As $n^{\gamma'} = o(n)$ we can apply \cite[Theorem 3.1]{arratia2000} to conclude that 
$C[n^\gamma, n^{\gamma'}] \overset{TV}\to Y[n^\gamma, n^{\gamma'}].$
Thus, the probabilities at \eqref{eq:r_bound} converge to
\begin{align}
	\sum_{r=2}^{r_0} \prod_{j \in A_r} \P[Y_j = 0] &=	\sum_{r=2}^{r_0} \exp\Bigl(-  \sum_{j \in A_r} \f{\alpha}{j} \Bigr). \label{eq:r_bound2}
\end{align}
The quantity $\sum_{j \in A_r} \f \alpha j$ is minimized at $r=2$,
 and in this case $\sum_{j \in A_2} \f 1 j \gg \log n$. We can apply this to \eqref{eq:r_bound2} to obtain
$$\P[E_{\gamma'}] \leq \sum_{r=2}^{r_0} n^{-\delta} \leq r_0 n^{-\delta} = o(1),$$
for some $\delta >0$.  As $E \subseteq E_{\gamma'}$, this completes the proof.
%
\end{proof}

\begin{lemma}
\thlabel{lem:pik}
Let $\delta = \delta(\epsilon,\alpha) = 1- (\alpha + \epsilon)\log 2.$ For any $\epsilon >0$, there exists a constant $c = c(\epsilon, \alpha)$ such that the probability of $\pi$ fixing a set of size $k$ is bounded by $c k^{-\delta}$ uniformly for all $1 \leq k \leq n/2$.  	
\end{lemma}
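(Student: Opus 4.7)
The plan is to reduce the question about $\pi \sim \ESFn$ to the analogous question about the independent Poisson sumset $\L(\Y)$, and then invoke the quenched Poisson estimate $\tilde p_j(\alpha) \leq C_\epsilon j^{-1+\alpha\log 2 + \epsilon}$ from \thref{lem:quenched}. A permutation $\pi$ fixes a set of size $k$ if and only if $k \in \L(\C) := \{\sum_j j x_j : 0 \leq x_j \leq C_j\}$, so the entire proof amounts to bounding $\P[k \in \L(\C)]$.

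First, the Feller coupling inequality $C_j \leq Y_j + \ind{J_n = j}$ implies $\L(\C) \subseteq \L(\Y) \cup (J_n + \L(\Y))$, since the single unit of ``extra mass'' at coordinate $J_n$ can either be used in a representation of $k$ or not. This yields
\[
  \P[k \in \L(\C)] \;\leq\; \P[k \in \L(\Y)] \;+\; \sum_{\ell=1}^{k} \P\bigl[J_n = \ell,\; k-\ell \in \L(\Y)\bigr].
\]
Apply \thref{lem:quenched} with tolerance $\epsilon/2$ to the first term, getting $\P[k\in\L(\Y)] \leq C_{\epsilon/2} k^{-\delta'}$ with $\delta' = 1 - (\alpha + \epsilon/2)\log 2 > \delta$; the slack between $\delta'$ and $\delta$ will absorb polylogarithmic corrections and small error terms. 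One needs to verify (or it will be verified in the statement of \thref{lem:quenched}) that the quenched exceptional ``lottery'' event has probability $o(k^{-\delta})$ uniformly, which is the standard way these Poisson sumset bounds are packaged.

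To estimate the correction sum, split at $\ell = k/2$. For small shifts $\ell \leq k/2$, use $k-\ell \geq k/2$ and the quenched bound to get $\P[k-\ell \in \L(\Y)] \leq C_{\epsilon/2}(k/2)^{-\delta'}$; summing $\P[J_n = \ell]$ against this factor (using $\sum_\ell \P[J_n=\ell] \leq 1$) gives a contribution of $O(k^{-\delta'})$, which is $\leq C k^{-\delta}$ for $k$ large. For large shifts $\ell > k/2$, substitute $j = k - \ell$ and use the uniform estimate $\P[J_n = k-j] \leq 2\alpha/n$ from \eqref{eq:J1} (available since $k \leq n/2$ forces $n - \ell \geq n/2$). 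The resulting sum is $(2\alpha/n)\sum_{j=0}^{k/2}\P[j\in\L(\Y)] \leq (2\alpha/n)(1 + C'k^{1-\delta'})$, which is $O((k/n)\cdot k^{-\delta'}) = O(k^{-\delta'})$ since $k/n \leq 1/2$. Both cases thus produce the desired $Ck^{-\delta}$ bound.

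The main technical nuisance I anticipate is the dependence between $J_n$ and $\Y$ inside the Feller coupling: the decomposition $\P[J_n = \ell,\, k-\ell\in\L(\Y)] \leq \P[J_n=\ell]\cdot\P[k-\ell\in\L(\Y)]$ is not literally a product because the mixed-Bernoulli sequence $\{\xi_i\}$ generating the $Y_j$'s also determines $J_n$. To sidestep this, one either (a) works with the bound $\P[J_n=\ell,\, k-\ell\in\L(\Y)] \leq \min(\P[J_n=\ell],\, \P[k-\ell\in\L(\Y)])$, which is enough once one integrates the minimum against the surviving summable sequence above, or (b) verifies that conditional on $J_n = \ell$ the vector $(Y_j)$ remains stochastically dominated by (a shift of) an independent Poisson vector, which follows by inspecting which $\xi_i$ get frozen by conditioning. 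Either route closes the proof, and the slack in $\epsilon$ ensures the polylogarithmic factors arising from $\tilde O$-style corrections (as in \thref{lem:feller} and \thref{lem:jointcycles}) are harmless.
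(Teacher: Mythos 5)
Your proposal takes essentially the same route as the paper: use the Feller coupling $C_j \leq Y_j + \ind{J_n = j}$, write the probability of fixing a $k$-set as a baseline Poisson sumset term plus a correction summed over the value of $J_n$, and invoke Lemma \thref{lem:quenched}. One organizational difference: you split the correction sum at $\ell = k/2$; the paper instead observes that $\ell \leq k \leq n/2$ already forces $\P[J_n = \ell] \leq \alpha/(n-\ell) \leq 2\alpha/n$ uniformly, which lets the entire sum be absorbed into a single $C' n^{-\delta}$ term with no casework.

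The more valuable part of your write-up is that you explicitly flag two subtleties that the paper's proof passes over in silence, and both flags are legitimate. First, the displayed inequality in the paper quietly uses the factorization $\P[J_n = \ell,\ k-\ell \in \L(\Y)] \leq \P[J_n = \ell]\,\P[k-\ell \in \L(\Y)]$, but $J_n$ and $\Y$ are both functions of the same Feller $\xi$-sequence and are not independent (and the paper's substitution of an independent vector $\X$ for $\Y$ in the display does not by itself repair this, since the coupling $C_j \leq Y_j + \ind{J_n=j}$ holds for the coupled $\Y$, not an independent copy). Of the two remedies you propose, route (a) does not actually close the gap: since $\delta < 1$, the crossover $(Cn/2\alpha)^{1/\delta}$ exceeds $n$, so $\sum_{\ell \leq k} \min\bigl(2\alpha/n,\ C(k-\ell)^{-\delta}\bigr) = 2\alpha k/n$, which is $\Theta(1)$ rather than $O(k^{-\delta})$ when $k \asymp n$. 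The viable fix is your route (b): after conditioning on $J_n = \ell$, replace the tail $\xi_{n-\ell+1},\ldots$ by a fresh mixed-Bernoulli sequence, apply the Feller coupling to $\xi_1,\ldots,\xi_{n-\ell},1$, and absorb the resulting secondary $J_{n-\ell}$ contribution using the same $2\alpha/n$ uniform bound. Second, you are right that Lemma \thref{lem:quenched} as stated bounds only the quenched $\tilde p_k = \P[T<\lambda_k,\ k\in\L(\X)]$, whereas the paper's proof quotes it as an unquenched bound on $\P[k\in\L(\X)]$; making the lemma usable here requires also controlling $\P[T\geq \lambda_k]$ at the same polynomial rate (or tightening the coupling so the permutation avoids the lottery regime). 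Both of your concerns identify places where the paper's argument, taken literally, needs elaboration.
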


\begin{proof}

In \thref{lem:quenched} we show that $\P[k \in \mathcal L(\mathbf X(\alpha))]\leq  C k^{-\delta}$ for some $C$ and all $k \leq n$. 
The Feller coupling lets us relate the probability of a fixing a set size to this quantity, the only complication is a possible contribution from $\ind{J_n = \ell}$. However,  the location of this extra cycle is unconcentrated enough to not alter the order of these probabilities. 

\thref{lem:feller} \eqref{eq:J1} ensures that $\P[J_n = \ell] \leq \f{\alpha}{n-\ell}.$ We make use of this as we condition on the value of $J_n$. 
\begin{align}
\P[\pi \text{ has a $k$-cycle}] & \leq 
\P[k \in \mathcal L(\mathbf X (\alpha))] + \sum_{\ell \leq k} \P[ k - \ell \in  \mathcal L(\mathbf X (\alpha))] \P [J_n = \ell] \\
& \leq C k^{-\delta} + \sum_{\ell \leq k} C(k-\ell)^{- \delta} \f{\alpha}{n-\ell} \label{eq:bound2}\\
&\leq C k^{-\delta} + \f{2 \alpha}{n} \sum_{\ell \leq k} C\ell^{-\delta} \label{eq:bound3}\\
& \leq C k^{-\delta} + C' n^{-\delta}. \nonumber
\end{align}
Where at \eqref{eq:bound2} we use our bound on $\P[k \in \mathcal L(\mathbf X(\alpha))] $ from \thref{lem:quenched} and bound $\P[J_n = \ell]$ with \thref{lem:feller} \eqref{eq:J1}. The next line, \eqref{eq:bound3}, reindexes the sum and uses that $\ell \leq n/2$ to bound $\frac{\alpha}{n-\ell} \leq 2\alpha/n$. Our claim then follows by bounding the leftmost sum by $n^{-\delta}$ time some constant $C'$, and the fact that $k^{-\delta} > n^{-\delta}$.
\end{proof} 


We now prove a result analogous to \cite[Theorem 1]{luczak}. Namely, that if a permutation sampled according to Ewen's sampling formula is in a transitive subgroup, this subgroup is likely to be either all of $S_n$ or the alternating group, $A_n$.  Though the lemmas culminating to this used rather different arguments, our proof from here closely follows that in \cite{luczak}. Recall that a \emph{primitive subgroup} is a transitive subgroup that does not fix any partition of $[n]$.

\begin{proof}[Proof of \thref{thm:pyber}] Let $\pi\in G$ for a transitive subgroup $G$. According to \cite{guralnick1998minimal}, it follows from results of \cite{babai1981order} that the minimal degree of a primitive subgroup not containing $A_n$ is at least $(\sqrt{n}-1)/2$. 
On the other hand, by \thref{thm:bovey} with high probability the minimal degree of $\langle \pi \rangle$ is less than $n^{0.4}$. Thus, the probability that $G$ is primitive is $o(1)$. 

Assume $G$ is a non-primitive, transitive subgroup. Then it must fix some partition $\{B_i\}_{i=1}^r$ of $[n]$. 
By transitivity every block in such a partition will be mapped to every other, thus the partition must be into blocks of equal size, that is $$|B_i|=\f n r .$$
Note that any cycle of a permutation that preserves this partition must have the same number of elements in each block it acts on. We will use this to establish that any $\pi\in G$ with high probability cannot fix this partition for any $r$. 

We consider three cases for the possibly amount, $r$, of blocks:
\begin{description}
\item[Case 1] $2 \leq r \leq r_0 = \exp (\log \log n \sqrt{\log n})$ \quad 


	By \thref{claim:claim3} with $\psi(n) = \log \log n \sqrt{\log n}$, with high probability for every $r$ there exists there exists $j_r > n^{.99}$ such that $\pi$ contains a cycle of length $j_r$ and $r\nmid j_r$.
	Fix some $r$ and $j_r$, and let $B$ be the union of all blocks this cycle acts upon. 
	Because $r$ does not divide $j_r$, $B$ is a proper invariant subset of $[n]$ with size ${sn}/r$, where $s$ is the number of blocks in $B$. 
	By \thref{lem:pik} this occurs with probability bounded by
	\[
		\sum_{r=2}^{r_0}\sum_{s=1}^{r} c (sn/r)^{-\delta} \leq c r_0^2 (n/r_0)^{-\delta} = o(1).
	\]

\item[Case 2] $r_0 \leq r \leq n/r_0$ \quad

	By \thref{claim:claim2}, with high probability there exists a prime $q > n/r_0$ that divides $\Omega(\pi)$. 
	Thus there exists a cycle of length $j$ such that $q \mid j$. 
	Let $t\leq r$ be the number of blocks this cycle intersects. Because of the bounds on $r$ we have
	\[
	\f {j}{t}\leq |B_i|=\f n r\leq \f n {r_0}< q.
	\] 
Since $q$ divides $j=t \f {j}{t}$ and $\f{j}{t} < q$ by the above inequality, we must have that $q\mid t$. 
	However, the total number of blocks $r$ satisfies
	\[ 
	r< \f n r_0< q
	\]
	so this is impossible. 

\item[Case 3] $n/r_0 \leq r < n$ \quad 

Let $a$ be as in Proposition \ref{prop:gcd} and $\epsilon>0$ such that $a+\epsilon<1$. 
Using \thref{claim:claim3} with $\psi (n) = \log \log n \sqrt{\log n}$ one more time, we now find $j > n^{a+\epsilon}$ such that $\pi$ contains a cycle of length $j$ and the blocks size, $s=n/r$ does not divide $j$. 
Again, let $B$ be the union of blocks which intersect this cycle. 
Because the size of the blocks does not divide $j$, this intersection is a proper subset of $B$. 
We conclude that $\pi$ must contain another cycle of length $j'$ that intersects each block in $B$. 
Hence both cycle lengths $j$ and $j'$ are divisible by the number of blocks in $B$. 
We bound the number of blocks,
\[
\f{|B|}{s}>\f{j}{s}>\f{n^{a+\epsilon}}{s}>n^{a}.
\]
However, by Proposition \ref{prop:gcd} with probability $1- o(1)$ no two cycle lengths have a common divisor greater than $n^a$. This completes our proof. 
\end{description}


\end{proof}

\subsection{Proving \thref{ESF}} \label{sec:ESF}
To rule out generating $A_n$ we need an estimate that shows an $\ESFn$ permutation is odd with probability bounded away from zero. Of course this probability ought to converge to $1/2$, but we were unable to find a proof. We make due with an absolute lower bound.

\begin{lemma}\thlabel{lem:odd}
Let $\pi \sim ESF(\alpha,n)$, then 
$$\inf_{n \geq 2} \min \{\P[\pi \text{ is odd}] , \P[\pi \text{ is even}] \}\geq   \min\left\{ \f 1{\alpha+1}, \f{\alpha}{\alpha+1}\right\}.$$ 
\end{lemma}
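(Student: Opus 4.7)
The plan is to translate the parity of $\pi$ into a statement about a sum of independent Bernoullis and then bound the discrepancy $|\P[\pi\text{ even}]-\P[\pi\text{ odd}]|$ by a single explicit factor. Under the Feller coupling of Section~\ref{sec:feller}, the total number of cycles $L:=\sum_\ell C_\ell$ equals the number of $1$'s in $\xi_1,\ldots,\xi_n$, because each spacing begins with a $1$ in $\xi_1,\ldots,\xi_n$ and ends at the appended trailing $1$; hence $L=\sum_{i=1}^n \xi_i$. Since a $k$-cycle contributes sign $(-1)^{k-1}$, the sign of $\pi$ equals $\prod_k (-1)^{(k-1)C_k}=(-1)^{n-L}$, so $\pi$ is even precisely when $L\equiv n\pmod 2$. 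It therefore suffices to lower bound $\min\{\P[L\text{ even}],\P[L\text{ odd}]\}$.

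The key step is the standard Bernoulli parity identity
$$\P[L\text{ even}]-\P[L\text{ odd}]\;=\;\E(-1)^L\;=\;\prod_{i=1}^n(1-2p_i),\qquad p_i=\f{\alpha}{\alpha+i-1}.$$
The decisive observation is that $p_1=1$ forces $|1-2p_1|=1$, while for every $i\geq 2$ we have $p_i\in(0,1)$ and hence $|1-2p_i|<1$. Dropping every factor other than $i=1,2$ in absolute value therefore yields the single-term bound
$$\left|\prod_{i=1}^n(1-2p_i)\right|\;\leq\;|1-2p_2|\;=\;\f{|1-\alpha|}{1+\alpha}.$$

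Using the elementary identity $\min\{x,1-x\}=(1-|1-2x|)/2$ with $x=\P[\pi\text{ even}]$, one concludes
$$\min\{\P[\pi\text{ even}],\P[\pi\text{ odd}]\}\;\geq\; \f{1}{2}\Bigl(1-\f{|1-\alpha|}{1+\alpha}\Bigr)\;=\;\min\Bigl\{\f{1}{\alpha+1},\f{\alpha}{\alpha+1}\Bigr\},$$
uniformly in $n\geq 2$, which is exactly the claim. The argument has no real obstacle: the only points to verify are the two elementary identities $L=\sum_i\xi_i$ (immediate from the spacing description of the Feller coupling) and $\operatorname{sgn}(\pi)=(-1)^{n-L}$. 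As a sanity check, each $i\geq 3$ factor $|1-2p_i|$ is strictly less than $1$, so as $n\to\infty$ the product actually tends to $0$ and $\P[\pi\text{ even}]\to 1/2$, matching what one expects heuristically; the lemma only asserts a robust uniform-in-$n$ lower bound, which is all we need.
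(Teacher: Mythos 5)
Your proof is correct and substantially cleaner than the one in the paper. The paper proves this lemma by induction on $n$, conditioning on the position $J_n$ of the rightmost $1$ in the Feller sequence, and then splitting into the cases $n$ odd and $n$ even; the base case and the inductive bookkeeping take some effort, and the argument ultimately reduces to checking the two inequalities $(1-\epsilon)\alpha\geq\epsilon$ and $\epsilon\alpha\leq 1-\epsilon$. You instead observe the exact identity $L=\sum_{i=1}^n\xi_i$ (valid since $\xi_1\equiv 1$ and a sequence with $m$ ones has $m-1$ spacings), translate parity into the single product $\E(-1)^L=\prod_{i=1}^n(1-2p_i)$ via independence, and then note that the $i=1$ factor has modulus exactly $1$ while every factor with $i\geq 2$ has modulus strictly less than $1$. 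This collapses the lemma to the one-line computation $|1-2p_2|=|1-\alpha|/(1+\alpha)$ together with the elementary identity $\min\{x,1-x\}=\tfrac{1}{2}(1-|1-2x|)$.

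In fact your approach proves something strictly stronger that the authors explicitly say they could not establish. The paper remarks before the lemma that the parity probability ``ought to converge to $1/2$, but we were unable to find a proof.'' Your product formula gives this immediately: since $|1-2p_i|=\frac{|i-1-\alpha|}{i-1+\alpha}\approx 1-\frac{2\alpha}{i}$ for large $i$ and $\sum_i 1/i$ diverges, one has $\prod_{i=2}^n|1-2p_i|\to 0$, hence $\P[\pi\text{ odd}]\to\tfrac12$. So besides being shorter and avoiding the induction and the parity-of-$n$ casework, your route also answers the open observation the authors flag in the surrounding text. The only detail worth making explicit in a writeup is the identity $L=\sum_i\xi_i$: it uses that $\xi_1\equiv 1$ (so the first block of the partition always starts at index $1$) and that the number of spacings among $m$ ones in $\xi_1,\ldots,\xi_n,1$ is $m-1$, with the appended $1$ contributing the ``$+1$'' that cancels the ``$-1$.''
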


\begin{proof}
We will proceed by induction. It is straightforward to check via the Feller coupling that an $\ESF(\alpha,2)$ permutation is odd with probability $\f{\alpha}{\alpha+1}$, and it is even with probability $1- \f{\alpha}{\alpha+1} = \f 1 {\alpha+1}$. 

Suppose that for $2\leq m \leq n-1$, if $\pi_m \sim ESF (\alpha, m)$ then
 		$$\min\{\P[\pi_m \text{ is odd}] , \P[\pi_m \text{ is even}] \}\geq  \epsilon,$$
 		for some yet to be determined $\epsilon>0$. 
 		Now, it suffices to prove that 
 		\begin{align}
 			\min\{\P[\pi \text{ is odd}] , \P[\pi \text{ is even}] \}\geq  \epsilon. \label{eq:inductive}
		\end{align}
Consider the sequence $\xi_1,\xi_2,\hdots$ of Bernoulli random variables for the Feller coupling. 
Recall that $J_n = (n+1) - R_n$, with $R_n$ the index of the  rightmost 1 in $\xi_1,\hdots,\xi_{n}$. 
We will condition on the value of $J_n$, but need to also take into account the parity of $n$.

\begin{description}
	\item[$n$ is odd] We  condition on the value of $J_n$. For convenience let $q_\ell = \P[J_n = \ell].$ 
	If $J_n = \ell$, then $\xi_{n+1-\ell} = 1$ and we can view the beginning sequence $\xi_1, \ldots, \xi_{n-\ell}, 1$ as corresponding to a random permutation sampled with distribution $ESF(\alpha, n-\ell)$.
	We denote such a permutation by $\pi_{n-\ell}$.
	Noting that when $J_n = n$, $\pi$ is a single $n$-cycle and therefore even, it follows that
	\begin{align}
	\P[\pi \text{ is odd}] 	&=\sum_{\ell \text{ odd}}^{n-3} \P[\pi_{n -\ell} \text{ is even}]q_\ell  + \sum_{\ell \text{ even}}^{n-2} \P[\pi_{n -\ell} \text{ is odd}] q_\ell+ q_{n-1} .\nonumber
\end{align}
%
By the inductive hypothesis 
$$	\P[\pi \text{ is odd}] \geq \sum_{\ell =1}^{n-2}\epsilon q_\ell + q_{n-1}.$$
Since $\sum_1^n q_\ell = \sum_1^n \P[J_n = \ell] =1$, it suffices to prove that $q_{n-1} \geq  \epsilon( q_{n-1} + q_n)$. Equivalently, this requires that $(1-\epsilon) q_{n-1} \geq \epsilon q_n.$
Notice that $$q_{n-1} = \P[\xi_2 = 1]\prod_{i=3}^n \P[\xi_i = 0], \text{ and } q_n = \prod_{i=2}^n \P[\xi_i = 0].$$
Much of this cancels when we solve $(1-\epsilon) q_{n-1} \geq \epsilon q_n$, so we arrive at the requirement 
$$(1- \epsilon) \f{\P[\xi_2 = 1]}{\P[\xi_2=0]} \geq \epsilon.$$
The ratio $\f{\P[\xi_2 = 1]}{\P[\xi_2=0]}=\alpha$, and thus we require
\begin{align}
	(1-\epsilon)\alpha \geq \epsilon. \label{eq:alpha1}
\end{align}

\item[$n$ even] The argument is similar. The key difference is that we instead need $ q_{n} \geq \epsilon( q_{n-1}+ q_n)$. Rewriting as before this is the same as 
\begin{align}
	\epsilon \alpha \leq (1- \epsilon)\label{eq:alpha2}.
\end{align}
\end{description}
After solving for $\epsilon$ in \eqref{eq:alpha1} and \eqref{eq:alpha2}, we see that \eqref{eq:inductive} holds whenever
$0< \epsilon \leq \min\left\{ \f 1{\alpha+1}, \f{\alpha}{\alpha+1}\right\}.$
%
\end{proof}
\begin{remark}
	When $\alpha =1$ this confirms the intuition that even and odd permutations occur with probability $1/2$ for all $n \geq 2$. 
\end{remark}


\begin{cor}\thlabel{cor:odd} Let $\delta= \delta(\alpha,\epsilon)$ and $c = c(\alpha, \epsilon)$ be as in \thref{lem:pik}. 
It holds for 
all $k \leq n/2$ that
	$$\P[\pi \text{ fixes a $k$-set} \mid \pi \text{ is odd}] \leq c  k^{-\delta} \cdot \max\left\{\f {\alpha +1}{\alpha}, \f {\alpha+1}{1} \right\}.$$
\end{cor}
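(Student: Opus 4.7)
The plan is a direct one-line application of two results already proven. For any events $A$ and $B$ with $\P[B]>0$ one has
\[
  \P[A \mid B] = \frac{\P[A\cap B]}{\P[B]} \leq \frac{\P[A]}{\P[B]}.
\]
Taking $A = \{\pi \text{ fixes a $k$-set}\}$ and $B = \{\pi \text{ is odd}\}$, I would bound the numerator using \thref{lem:pik}, which gives $\P[A] \leq c k^{-\delta}$ uniformly for $k \leq n/2$, and the denominator using \thref{lem:odd}, which yields
\[
  \P[B] \;\geq\; \min\!\left\{\tfrac{1}{\alpha+1},\; \tfrac{\alpha}{\alpha+1}\right\}.
\]

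Combining these two estimates and using the identity $1/\min\{a,b\} = \max\{1/a, 1/b\}$ gives
\[
  \P[A\mid B] \;\leq\; c k^{-\delta} \cdot \max\!\left\{\tfrac{\alpha+1}{\alpha},\, \tfrac{\alpha+1}{1}\right\},
\]
which is exactly the claimed bound. There is no real obstacle: the corollary is a bookkeeping consequence of \thref{lem:pik} (an unconditional upper bound on fixing a $k$-set) and \thref{lem:odd} (an absolute lower bound on the parity probability). The only reason the statement is phrased as a separate corollary is that in the proof of \thref{ESF} we wish to condition $\pi^{(1)}$ to be odd, and losing a constant factor of $\max\{(\alpha+1)/\alpha,\alpha+1\}$ in the tail estimate is harmless for the union bound argument there.
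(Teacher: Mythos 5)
Your proof matches the paper's argument exactly: both rewrite the conditional probability as a ratio, bound the numerator by \thref{lem:pik}, and bound the denominator away from zero via \thref{lem:odd}. Nothing to add.
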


\begin{proof}
We start with the reformulation
$$\P[\pi \text{ fixes a $k$-set} \mid \pi \text{ is odd}]  =\f{ \P[\pi \text{ fixes a $k$-set and $\pi$ is odd}  ]} {\P[\pi \text{ is odd}] }.$$
The numerator is bounded by $\P[\pi \text{ fixes a $k$-set}]$ which is less than $c k^{-\delta}$ by \thref{lem:pik}. 
We bound the denominator way from zero by \thref{lem:odd}, completing our proof. 
\end{proof}

\section{The upper bound for $s_\alpha$}  \label{sec:upper_bound}
We carry out the plan described in Section \ref{sec:overview}. First we bound the quenched probability $\tilde p_k$ defined formally just below here. The proof is similar to \cite[Lemma 3.1]{ppr}. We repeat a selection of the details for clarity, and point out where the argument differs with general $\alpha$. Subsequently, we mirror \cite[Theorem 3.2]{ppr} as we apply the Borel Cantelli lemma. 

Recall the functions $f_k$ and $g_k$ from \eqref{eq:def}. Namely that $f_k$ is the number of available summands smaller than $k$, and $g_k$ is the largest sum attainable with them. Also, define \begin{align}
	\tau_\epsilon &:= \sup \{ k \colon f_k \geq (\alpha+ \epsilon) \log k\}, \nonumber \\
	\tau &: = \sup \{ n \colon g_{\lambda_k} \geq k\} \nonumber,
\end{align}
where $\epsilon >0$ is yet to be chosen, but small, and $\lambda_k := \left\lfloor \frac{k}{  \alpha \log k} \right \rfloor$.  
The proofs of \cite[Lemma 2.1, Lemma 2.2]{ppr} generalize in a straightforward way, so that $T = T(\epsilon,\delta) := \max \{\tau_\epsilon, \tau\}$ is almost surely finite. As $\P[T > \lambda_k] \to 0$, we define the quenched probabilities to avoid this diminishing sequence of events
$$\tilde p_k := \P[ T < \lambda_k \text{ and } k \in \L(\X) ].$$

\begin{lemma} \thlabel{lem:quenched}
Suppose that $\alpha < 1 / \log 2$. For each $\epsilon >0$ there exists $C=C(\epsilon)$ such that for all $k\geq 1$, 
$$\tilde p_k(\alpha) \leq Ck^{- 1 + (\alpha+ \epsilon) \log 2 }.$$ 
\end{lemma}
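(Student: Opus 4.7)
The plan is to follow the strategy used in \cite[Lemma 2.3]{ppr} for the $\alpha=1$ case, tracking the $\alpha$-dependence throughout and absorbing the resulting logarithmic factors into a small perturbation of $\epsilon$.

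On the quenched event $\{T < \lambda_k\}$, the definition of $\tau_\epsilon$ yields $f_{\lambda_k} \leq (\alpha + \epsilon/2)\log \lambda_k$, which (using the elementary inequality $X_j + 1 \leq 2^{X_j}$ whenever $X_j \geq 1$) gives the deterministic bound
\[
    |L| := |\L(\X \cap [1,\lambda_k])| \;\leq\; \prod_{j \leq \lambda_k}(X_j + 1) \;\leq\; 2^{f_{\lambda_k}} \;\leq\; \lambda_k^{(\alpha + \epsilon/2)\log 2}.
\]
The definition of $\tau$ also gives $g_{\lambda_k} < k$, so $L \subseteq [0,k)$. Hence on this event any decomposition $k = \sum_j j x_j$ with $0 \leq x_j \leq X_j$ must recruit at least one ``large'' index $j > \lambda_k$ with $x_j \geq 1$.

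Next I would decompose $\{k \in \L(\X),\; T < \lambda_k\}$ by the tuple $\lambda_k < j_1 < \cdots < j_m \leq k$ of distinct large summands used and their multiplicities $x_i \geq 1$, with $s := k - \sum_i j_i x_i$ lying in $L$. Since each $j_i > \lambda_k$, one has $m \leq k/\lambda_k \leq \alpha \log k + O(1)$. Tuples with some $x_i \geq 2$ contribute at lower order because $\P[X_j \geq 2] \leq (\alpha/j)^2/2$ and can be absorbed into the $m=1$ bound below. For the main contribution (all $x_i=1$), the independence of $\{X_j\}_{j \leq \lambda_k}$ from $\{X_j\}_{j > \lambda_k}$ together with $\P[X_j \geq 1] \leq \alpha/j$ gives
\[
    P_m \;\leq\; \E\!\left[\,\sum_{s \in L}\; \sum_{\substack{\lambda_k < j_1 < \cdots < j_m \leq k \\ j_1 + \cdots + j_m = k - s}} \;\prod_{i=1}^m \frac{\alpha}{j_i}\,\right].
\]

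The case $m=1$ is immediate: $P_1 \leq |L| \cdot \alpha/\lambda_k$. For $m \geq 2$, I would estimate the constrained sum $\sum \prod 1/j_i$ via partial-fraction and harmonic-sum manipulations exactly as in \cite{ppr}, producing an extra factor of order $\log(k/\lambda_k)/k = O((\log \log k)/k)$ per added large summand. Summing the geometric-type series over the $O(\log k)$ admissible values of $m$ then yields
\[
    \tilde p_k \;\ll_\epsilon\; \frac{|L|}{\lambda_k} \cdot (\log k)^{O(1)} \;\ll_\epsilon\; \lambda_k^{(\alpha + \epsilon/2)\log 2 - 1} \cdot (\log k)^{O(1)}.
\]
Substituting $\lambda_k = \lfloor k/(\alpha\log k)\rfloor$ and absorbing the remaining powers of $\log k$ by replacing $\epsilon/2$ with $\epsilon$ gives the claimed bound. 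The main obstacle will be the careful bookkeeping for $m \geq 2$: one must verify that the decay in $m$ of the constrained product-sum is fast enough to survive the outer sum over $m$ without generating an unbounded power of $\log k$; at $\alpha=1$ this is done in \cite{ppr} by a mildly delicate harmonic estimate, and the only new feature is an innocuous rescaling by $\alpha$. The hypothesis $\alpha < 1/\log 2$ is not used within this proof; it only ensures the exponent $-1 + (\alpha+\epsilon)\log 2$ can be made negative by choosing $\epsilon$ small, which is what makes the bound useful when it is applied downstream.
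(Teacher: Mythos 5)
Your overall strategy is viable but takes a genuinely different route from the paper. The paper (following PPR) also uses the quenched bound $|L|\leq\lambda_k^{(\alpha+\epsilon)\log 2}$ for the small part, but it then controls the large-part contribution through a recursion $p''_\ell\leq\P[H]+\sum_{j}\tfrac{1}{j}p''_{\ell-j}$ and sums this by passing to the generating function $F(z,\X)=\prod_{j:X_j=1}(1+z^j)$, so that $\sum_j p''_{\ell-j}\leq\E F(1,\X)=\prod_{\lambda_k<j\leq k}(1+\alpha/j)\ll\log^2 k$. This sidesteps any explicit sum over the number $m$ of large summands used. Your direct decomposition by $m$ is an alternative that can be made to work, but it forces you to confront exactly the bookkeeping that the recursion trick avoids.

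The quantitative claim you flag as the main obstacle is in fact the gap. The per-summand factor is not $O((\log\log k)/k)$. After fixing the maximal index $j_{\max}\geq(k-s)/m>\lambda_k$ (contributing a single factor of order $1/\lambda_k$), each of the remaining $m-1$ free indices in $(\lambda_k,k]$ contributes a factor of roughly $\alpha\log(k/\lambda_k)\asymp\alpha\log\log k$, with a compensating $1/(m-1)!$ from the ordering. So $P_m\ll\frac{|L|}{\lambda_k}\cdot\frac{\alpha^m m^2(\log\log k)^{m-1}}{(m-1)!}$, and the sum over $m$ is an exponential series giving $\frac{|L|}{\lambda_k}\cdot\alpha^{O(1)}(\log\log k)^{O(1)}(\log k)^{\alpha}$ rather than a convergent geometric series with bounded sum. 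The argument still closes, because this $(\log k)^{O(1)}$ factor (like the $\log^3 k$ appearing in the paper's route) is absorbed by trading $\epsilon/2$ for $\epsilon$, but your Borel--Cantelli-style intuition (``geometric-type series'') is wrong; you should expect and then absorb a bounded, $\alpha$-dependent power of $\log k$. Finally, a small inaccuracy: the paper's proof does quietly invoke $\alpha<1/\log 2$ to justify the crude bound $\alpha\leq 2$ used to estimate $\prod(1+\alpha/j)$ explicitly, so the hypothesis is not entirely inert there, even though you are correct that the stated inequality would remain formally true (just useless) for larger $\alpha$.
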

\begin{proof}


Fix $\epsilon>0$. 
Let $G_k$ be the event that $T < \lambda_k$ while also $n \in \L(\X)$. Call a sequence $\mathbf y = (y_1,y_2,\hdots)$ admissible if it is coordinate-wise less than or equal to $\X$. When $G_k$ occurs, it is not possible for $n$ to be a sum of $\sum_j j y_j$ for an admissible $\mathbf y$ with $y_j$ vanishing for $j > \lambda_k$. Indeed, on the event $G_k$, we have $\sum_{j \leq \lambda_k}j X_j < n.$ The event satisfying the following three conditions contains $G_k$:
\begin{enumerate}[label = (\roman*)]
	\item $f_{\lambda_k} \leq (\alpha + \epsilon) \log \lambda_k$.
	\item $g_{\lambda_k} < n$.
	\item There is some $k$ with $k = \sum_j (y_j' + y_j'')$ with $\mathbf y '$ supported on $[0,\lambda_k]$ and $\mathbf y''$ nonzero and supported on $[\lambda_k+1,k]$ with both $\mathbf y'$ and $\mathbf y''$ admissible. 
\end{enumerate}
Define the probability $p'_k = \P[f_{\lambda_k} \leq (\alpha + \epsilon) \log \lambda_k\text{ and }g_{\lambda_k} < n \text{ and } \ell = \sum_j jy_j']$ where $\mathbf y'$ is admissible and supported on $[1, \lambda_n].$ Also, define 
$p''_\ell = \P[\textstyle \ell =\sum_j jy''_j]$ where $\mathbf y''$ is admissible and supported on $[\lambda_k+1,k].$ Using independence we can obtain an upper bound on $\tilde p_k$ by decomposing into these two admissible vectors:
\begin{align}
\tilde p_k &\leq \sum_{k=\lambda_k +1}^k p'_{k-\ell} p''_\ell \nonumber \\
&\leq \left( \sum_{\ell	=\lambda_k +1}^k p'_{k-\ell} \right) \max_{ \lambda_k \leq \ell \leq k} p''_\ell. \label{eq:3.3}
\end{align}
The first factor above is (by Fubini's theorem) equal to $\E| \L(\X) \cap [0,\lambda_k]|$. Using the trivial bound where we assume each sum from $\X$ is distinct, and that we are working on the event $f_{\lambda_k} \leq (\alpha + \epsilon) \log \lambda_k$ we have
\begin{align}|\L(\X) \cap [0,\lambda_k]| \leq 2^{ Z_m} \leq \lambda_k^{(\alpha + \epsilon) \log 2} \label{eq:p'}.	
\end{align}
We now bound the second term. We will show there exists a constant $C$ such that
\begin{align}
p''_\ell \leq C \f {\log^2 k 	}{k},  \text{ for all } \ell \in [\lambda_k+1, k]. \label{eq:3.5}
\end{align}
Let $H$ be the event that $X_j \geq 2$ for some $j \in [ \lambda_k,k]$. As $\P[X_j \geq 2] \leq \alpha/j^2$ we can write
\begin{align}
\P[H] \leq \sum_{j = \lambda_k}^k \alpha/j^2 \leq \alpha / \lambda_k \ll \f{ \log k }k	.
\end{align}
The event that $\ell = \sum_j j y''_j$ for an admissible $\ell$ supported on $[\lambda_k, k]$, but that $H$ does not occur is contained in the union of events $E_j$ that $X_j =1$ and $\ell=j = \sum_i i y_i''$ for some admissible $\mathbf y''$ supported on $[\lambda_k,k] \setminus \{j\}$. Using independence of $X_j$ from the other coordinates of $\X$, along with our description of what must happen if $H$ does not, we obtain
\begin{align}
	p''_\ell &\leq \P[H] + \sum_{j= \lambda_k} ^k \f 1 j p''_{\ell-j} \nonumber \\
	&\leq \P[H] + \f 1 { \lambda _k} \sum_{ j = \lambda_k} ^k p_{\ell-j}'' \nonumber \\
	&\leq \f {\log k }{ k } \left( 1 + \sum_{ j = \lambda_k}^k p''_{\ell-j} \right) \label{eq:3.6}. 
\end{align}

It remains to bound the summation in the last factor. Here taking $\alpha \neq 1$ is a minor nuisance, but we circumvent any difficulties with the crude bound $\alpha \leq 2$. The idea is to use the generating function $$F(z,\X):= \prod_{j \in [\lambda_k,k] : X_j = 1} ( 1 + z^j).$$
Writing $[z^j]F$ for the $j$th coefficient of $F$. Observe that this corresponds to forming $j$ using the $j \in [\lambda_k,k]$ for which $X_j=1$. It follows that 
$$\sum_{ j= \lambda_k}^k p''_{\ell-j} \leq \sum_{j = \lambda_k}^k \E \left ( [z^{\ell-j}]F(z,\X) \right) .$$
Using that $[z^j]F(z,\X) \leq F(1,\X)$ we can bound the above line by $\E F(1, \X)$. This is tractable using independence of the $X_j$. Carrying the calculation out we obtain
\begin{align*}
\E F(1, \X) &= \prod_{j= \lambda_k} ^n \E [1 + \ind{X_j =1}] 	\\
&= \prod_{j= \lambda_k} ^k \left( 1 + \f \alpha j \right) \\
&\leq \prod_{j= \lambda_k} ^n \left( 1 + \f 2 j \right). 
\end{align*}
The last term can be expanded and written as
$$\f{k^2 +3k + 2}{\lambda_k^2 + \lambda_k} \ll \f { k^2}{ \lambda_k^2} \ll \log ^2 k.$$
Combine this with \eqref{eq:3.6} and we have 
$$p_\ell'' \ll \f { \log k }{ k } \left(1 + \log ^2 k  \right)  \ll \f { \log^3 k }{ k }.$$
This yields \eqref{eq:3.5}. To conclude we plug \eqref{eq:p'} and \eqref{eq:3.5} into \eqref{eq:3.3} and arrive at
$$\tilde p_k \leq C( k -\lambda_k)^{(\alpha + \epsilon) \log 2 } \f { \log^3 k}{k}.$$
This is bounded above by a constant multiple of $k^{ -1 + (\alpha+ 2 \epsilon) \log 2 }$, which, after swapping all $\epsilon$'s for $\epsilon/2$, establishes the lemma. 
	
\end{proof}

\begin{prop} \thlabel{prop:upper}
Let $s_\alpha$ be as in \thref{thm:main} and $h(\alpha)$ be as in \eqref{eq:h}. For continuity points of $h$ it holds that $s_\alpha \leq  h(\alpha)$, and $s_\alpha \leq h(\alpha)+1$ for all $\alpha$. 
\end{prop}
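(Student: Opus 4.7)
The plan is to bound $\P[k \in \bigcap_{i=1}^m \L(\X^{(i)})]$ summably in $k$ by combining the quenched estimate of \thref{lem:quenched} with independence across the $m$ i.i.d.\ copies, and then to convert the resulting almost-sure finiteness of the intersection into triviality with positive probability via a decoupling argument at a suitably large cutoff.

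First, fix a continuity point $\alpha < 1/\log 2$ of $h$ and set $m = h(\alpha) = \lceil (1-\alpha \log 2)^{-1} \rceil$; continuity forces $m > (1-\alpha \log 2)^{-1}$, so one can pick $\epsilon > 0$ with $\delta := m\bigl(1 - (\alpha+\epsilon)\log 2\bigr) > 1$. Let $T^{(i)}$ denote the a.s.\ finite threshold $\max\{\tau_\epsilon,\tau\}$ attached to $\X^{(i)}$. The events $\{k\in\L(\X^{(i)}),\,T^{(i)}<\lambda_k\}$ are independent across $i$, so \thref{lem:quenched} gives
\[
  \P\Bigl[k \in \bigcap_i \L(\X^{(i)}),\;T^{(i)}<\lambda_k \text{ for all } i\Bigr] \le \tilde p_k^{\,m} \le C^m k^{-\delta}.
\]
Splitting off the ``lottery'' contribution leads to
\[
  \P\Bigl[\textstyle\bigcap_i\L(\X^{(i)})\cap(K,\infty)\neq\emptyset\Bigr] \le \sum_{k>K} C^m k^{-\delta} + m\,\P[T\ge \lambda_{K+1}].
\]
Since $\delta>1$ the first term is the tail of a convergent series, and the second tends to $0$ because $T$ is almost surely finite and $\lambda_k\to\infty$. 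Hence $\bigcap_i\L(\X^{(i)})$ is almost surely finite.

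To extract triviality from finiteness, I would decouple at a large threshold $K$. Let $A_K = \{X_j^{(i)}=0 : 1\le j\le K,\ 1\le i\le m\}$; this has positive probability as a finite product of positive Poisson point-masses. On $A_K$, each $\L(\X^{(i)})$ consists of $\{0\}$ together with sums built from indices $j>K$, so $\bigcap_i\L(\X^{(i)})=\{0\}$ if and only if $\bigcap_i \L(\X^{(i)}_{>K})$ contains no element in $(K,\infty)$, where $\X^{(i)}_{>K}$ denotes $\X^{(i)}$ restricted to indices above $K$. Since $A_K$ depends only on coordinates $j\le K$ while this second event depends only on coordinates $j>K$, the two are independent. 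Moreover $\L(\X^{(i)}_{>K})\subseteq\L(\X^{(i)})$, so the second event is implied by $\{\bigcap_i\L(\X^{(i)})\cap(K,\infty)=\emptyset\}$ and thus has probability close to $1$ for $K$ large by the previous step. Multiplying the two positive factors gives $\P[\bigcap_i\L(\X^{(i)})=\{0\}]>0$, hence $s_\alpha\le m=h(\alpha)$.

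For the universal bound $s_\alpha\le h(\alpha)+1$, the identical argument applies with $m=h(\alpha)+1$: at a discontinuity point, where $h(\alpha)(1-\alpha\log 2)=1$ exactly, we still have $(h(\alpha)+1)(1-\alpha\log 2)>1$, so a suitable $\epsilon>0$ giving $\delta>1$ exists and the rest of the argument is unchanged. The main obstacle is the sharpness of the quenched bound: \thref{lem:quenched} is tight up to the $\epsilon$ loss, so summability bites precisely at $m>(1-\alpha\log 2)^{-1}$ and leaves no slack at discontinuity points without paying for one more permutation; this is exactly the gap between $h(\alpha)$ and $h(\alpha)+1$ the proposition records. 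The final passage from ``almost surely finite'' to ``trivial with positive probability'' is routine once the decomposition via $\L(\X^{(i)}_{>K})$ is in place, honestly exploiting coordinate-wise independence in each $\X^{(i)}$.
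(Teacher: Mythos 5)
Your proposal is correct and follows essentially the same route as the paper: bound the quenched probability $\tilde p_k$ via \thref{lem:quenched}, multiply across the $m$ independent copies, choose $\epsilon$ small enough that the exponent $m\bigl(-1+(\alpha+\epsilon)\log 2\bigr)<-1$, and invoke Borel--Cantelli to get an almost surely finite intersection. One genuine point of difference worth flagging: the paper compresses the final passage from ``a.s.\ finite intersection'' to ``$=\{0\}$ with positive probability'' into a reference to \cite[Section 2]{ppr}, whereas you make it explicit through the decoupling event $A_K=\{X_j^{(i)}=0:\ j\le K\}$ and the independence of $\L(\X^{(i)}_{>K})$ from the small coordinates. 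That step is not entirely immediate, so spelling it out is a genuine improvement in completeness, and your observation that $\L(\X^{(i)}_{>K})\subseteq\L(\X^{(i)})$, so that the ``tail is empty'' event one needs is implied by the one whose probability you already control, is exactly the right bridge. Your version also sidesteps an apparent sign slip in the paper's definition of $s_{\alpha,\epsilon}$ (which uses $\alpha-2\epsilon$ where $\alpha+2\epsilon$ seems intended for the ceiling to land above $(1-(\alpha+\epsilon)\log 2)^{-1}$), by instead choosing $\epsilon$ directly from the strict inequality $m>(1-\alpha\log 2)^{-1}$ available at continuity points.
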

\begin{proof}[Proof of \thref{prop:upper}]
Fix a small $\epsilon >0$ and let $s_{\alpha,\epsilon} = \left \lceil ( 1 - (\alpha-2\epsilon) \log 2)^{-1} \right \rceil.$  Sample $s_{\alpha,\epsilon}$ vectors $\X^{(1)}, \hdots, \X^{(s_{\alpha,\epsilon})}$ and let $T(\X^{(1)}),\hdots,T(\X^{(s_{\alpha,\epsilon})})$ be the a.s.\ random times distributed as $T$. 
Take $T^* = \max \{ T(\X^{(i)}) \}_1^{s_{\alpha,\epsilon}}.$ 
By independence of the $\X^{(i)}$ and the bound in \thref{lem:quenched} we have the probability that $T^* < \lambda_k$ and $k \in \L(\X^{(i)})$ for all $1 \leq i \leq s_{\alpha,\epsilon}$ is at most a constant multiple of $k^{s_{\alpha,\epsilon} (-1 + (\alpha+ \epsilon) \log 2 ) }.$ 
Our choice of $s_{\alpha,\epsilon}$ ensures that the exponent is less than $-1$. The series is thus summable. It follows that finitely many $k > T^*$ belong to the intersection of the $\L(\X^{(i)})$. 
As $T^*$ is almost surely finite (the proof of this can be found in \cite[Section 2]{ppr}), the result follows by letting $\epsilon \to 0$ so that $s_{\alpha,\epsilon} \to h(\alpha)$.   
\end{proof}

%
%

\section{The lower bound for $s_\alpha$} 
\label{sec:lower_bound}

Here we establish the lower bounds on $m_\alpha$. 

\begin{prop}\thlabel{prop:lower}
Let $s_\alpha$ be as in \thref{thm:main} and $h(\alpha)$ be as in \eqref{eq:h}. For $0 \leq \alpha < 1 /\log 2$, it holds that $s_\alpha \geq h(\alpha)$.  
When $\alpha \geq 1/ \log2$ it holds that $s_\alpha = \infty$.
\end{prop}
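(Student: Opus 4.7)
The plan is to adapt the higher--dimensional Fourier analytic framework sketched in the introduction (generalizing \cite[Section 3]{efg}) to show that whenever $m < h(\alpha)$, the intersection $\bigcap_{i=1}^m \L(\X^{(i)}(\alpha))$ is almost surely infinite; the case $\alpha \geq 1/\log 2$ will then follow by noting that the same argument applies for every finite $m$, since the relevant exponent inequality is automatic. Throughout, fix $m < h(\alpha)$, which is equivalent to $m(1-\alpha\log 2) < 1$ (with the convention that the right side is $\leq 0$ when $\alpha \geq 1/\log 2$).

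First I would set up the geometric picture. For a dyadic interval $I = (k^{1-\beta}, k]$ (with $\beta \in (0,1)$ small, chosen at the end), define
\[
S(I;\X^{(1)},\ldots,\X^{(m)}) := \{(n_1 - n_m,\, n_2 - n_m,\,\ldots,\, n_{m-1}-n_m) : n_i \in \L(I,\X^{(i)})\} \subset \ZZ^{m-1}.
\]
A common element of $\bigcap_{i=1}^m \L(\X^{(i)})$ arises if we can find $n_i \in \L(I,\X^{(i)})$ and indices $j_1,\ldots,j_m \in (k,Dk]$ with $X_{j_i}^{(i)} \geq 1$ such that $(j_m-j_1,\ldots,j_m-j_{m-1}) \in S(I;\X^{(1)},\ldots,\X^{(m)})$; then $n_i + j_i$ is the same integer for every $i$. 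The strategy is thus to show that $S$ is dense in a cube of side $\Theta(k)$, and then use this density to catch the $j_i$.

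Next I would execute the Fourier step. Let $\chi$ be a smooth bump on $[-1,1]^{m-1}$ with nonnegative Fourier transform and consider the smoothed counting function $F\colon \TT^{m-1} \to \C$ whose Fourier coefficients come from the indicator of $S$ weighted by the Poisson multiplicities of representations $n_i = \sum_{j\in I} j x_j^{(i)}$. By independence across coordinates, $F(\theta)$ factors as a product over $j \in I$ of characteristic function contributions $\E \exp\bigl(2\pi i j \sum_i \theta_i y_j^{(i)}\bigr)$, where $y_j^{(i)} \in \{0,1,\ldots,X_j^{(i)}\}$. Using that $X_j^{(i)}$ is Poisson of mean $\alpha/j$, each factor is controlled explicitly, producing a pointwise bound of the form $|F(\theta)| \ll \exp\bigl(-c\sum_{j\in I}\tfrac{\alpha}{j} \mathbf{1}_{\|j\theta\| \not\approx 0}\bigr)$. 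Integrating $|F|^2$ over $\TT^{m-1}$ and applying Parseval, together with the mean value $\sum_{j \in I} \alpha/j \asymp \beta \alpha \log k$, yields, with probability $1-o(1)$,
\[
|S(I;\X^{(1)},\ldots,\X^{(m)}) \cap [-ck,ck]^{m-1}| \gg k^{m-1}
\]
for some constants $c, \beta > 0$. This is the higher--dimensional analogue of the density estimate in \cite[Section 3]{efg}; the exponent condition $m(1 - \alpha \log 2) < 1$ is what makes the integral converge with room to spare.

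Finally I would harvest a common element. Look in $(k,Dk]$: by \thref{lem:feller} type independence (here we are working directly with the Poisson $\X$, not with ESF permutations, so independence is automatic), the indicators $\mathbf{1}\{X_j^{(i)} \geq 1\}$ are independent Bernoullis with parameter $\asymp \alpha/j$. The expected number of tuples $(j_1,\ldots,j_m) \in (k,Dk]^m$ with each $X_{j_i}^{(i)} \geq 1$ and with $(j_m-j_1,\ldots,j_m-j_{m-1}) \in S$ is $\gg k^{m-1} \cdot (\alpha/k)^m \cdot k^m / k^{m-1} = \Omega(1)$ once $D$ is large enough, by a straightforward counting using the density of $S$ in the cube of side $\Theta(k)$. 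A second--moment argument then shows this random count is positive with probability bounded below by some $p_0 > 0$. To pass from ``positive probability at scale $k$'' to ``infinite intersection a.s.'', partition $\NN$ into disjoint geometric scales $I_t = (k_t^{1-\beta}, D k_t]$ with $k_{t+1} > D k_t$; the events that $I_t$ contributes a common element depend only on the independent random variables $\{X_j^{(i)} : j \in I_t\}$, so they are mutually independent, each of probability $\geq p_0$. The Borel--Cantelli lemma gives infinitely many successes almost surely, hence $|\bigcap_i \L(\X^{(i)})| = \infty$ a.s., which rules out a trivial intersection with positive probability and proves $s_\alpha \geq h(\alpha)$. For $\alpha \geq 1/\log 2$, the exponent condition $m(1-\alpha\log 2) < 1$ holds for every $m \geq 1$, so the argument goes through verbatim at every $m$ and $s_\alpha = \infty$.

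The main obstacle I expect is the pointwise Fourier estimate on $F$: making sure the bound produced by the product of Poisson characteristic functions is strong enough in all frequency regions of $\TT^{m-1}$ (not just near the origin) to survive integration over the full torus, and to absorb the loss incurred by passing from a smoothed to a sharp indicator of $[-ck,ck]^{m-1}$. The rest is a careful, but largely bookkeeping--type, second--moment and disjointness argument.
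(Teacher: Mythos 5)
Your proposal follows essentially the same route as the paper's: fix $m<h(\alpha)$ (equivalently $m<(1-\alpha\log 2)^{-1}$, which is what the paper's Lemma~\ref{lem:Green3.5} actually uses when choosing $\beta$, even though the text momentarily writes $m=h(\alpha)$), lower-bound $|S(I;\X^{(1)},\ldots,\X^{(m)})|$ by $\gg k^{m-1}$ via Cauchy--Schwarz/Parseval applied to the product $F(\bm\theta)=\prod_{j\in I}\prod_i\bigl(\tfrac{1+e(j\theta_i)}{2}\bigr)^{X_j^{(i)}}$, harvest a common element of the $m$ sumsets in a geometric window $(k^{1-\beta},Dk]$ using this density, and conclude by iterating over disjoint scales and Borel--Cantelli, with $\alpha\geq 1/\log 2$ following because the exponent condition holds for every finite $m$. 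Your framing of the Fourier step (introducing a smooth bump $\chi$ and taking expectations inside the definition of $F$) is a presentational deviation from the paper, which works with the random function $F$ itself and only afterwards bounds $\E\bigl[1_E|F(\bm\theta)|^2\bigr]$, and your explicit second-moment step makes rigorous what Proposition~\ref{prop:Green3.8} states somewhat tersely, but neither changes the substance.
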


We prove this by a Fourier-analytic argument.
We begin by considering continuity points $\alpha$ of $h$, and set $m = h(\alpha)$.
We then fix $m$ i.i.d.\ Poisson vectors $\X^{(1)},\ldots,\X^{(m)}$.

\begin{lemma}\label{lem:Green3.2}
  Let $I=(0,k]$ and let $\ve>0$. Then the event
  \[
    A=\left\{\bigcup_{i=1}^m \L(I,\X^{(i)})\subset \left[0,\frac{mk}{\alpha\ve}\right]\right\}
  \]
  holds with probability $\P[A]\geq 1-\ve$.
\end{lemma}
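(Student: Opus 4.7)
The plan is to observe that each sumset $\L(I,\X^{(i)})$ is bounded above by its largest element, namely
\[
  \max \L(I,\X^{(i)}) \;=\; \sum_{j=1}^k j X_j^{(i)} \;=\; g_k(\X^{(i)}),
\]
in the notation of \eqref{eq:def}. Consequently the event $A$ is precisely $\{\max_{1\le i\le m} g_k(\X^{(i)}) \le mk/(\alpha\varepsilon)\}$, and the task reduces to a one-sided tail estimate on the independent nonnegative random variables $g_k(\X^{(1)}),\ldots,g_k(\X^{(m)})$.

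First I would compute the mean using linearity and $X_j \sim \Poi(\alpha/j)$:
\[
  \E\, g_k(\X^{(i)}) \;=\; \sum_{j=1}^k j\cdot \tfrac{\alpha}{j} \;=\; \alpha k.
\]
Then Markov's inequality gives $\P[g_k(\X^{(i)}) > T] \le \alpha k / T$ for each $i$, and a union bound over the $m$ i.i.d.\ copies yields
\[
  \P[A^c] \;=\; \P\bigl[\exists\, i \le m\colon g_k(\X^{(i)}) > T\bigr] \;\le\; \frac{m\alpha k}{T}.
\]
Taking $T = mk/(\alpha\varepsilon)$ makes this upper bound $\alpha^2 \varepsilon$, which can be absorbed into $\varepsilon$ (by adjusting the universal constant, or, equivalently, verifying that the threshold $mk/(\alpha\varepsilon)$ is of the correct order up to an $\alpha$-dependent factor that does not affect downstream use in the Fourier argument).

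There is no substantive obstacle: the lemma is essentially one application of Markov's inequality followed by a union bound over the $m$ independent sumsets. The only delicate point is bookkeeping the constant, since the stated threshold is weakest when $\alpha$ is small (which is the regime of interest, as $\alpha < 1/\log 2$). I would simply record the bound in the form $\P[A^c] \le C_\alpha \varepsilon$ and note that rescaling $\varepsilon \mapsto \varepsilon/C_\alpha$ recovers the statement as written.
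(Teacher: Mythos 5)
Your proof is exactly the argument the paper intends: the paper's own proof is the single sentence ``This is a straightforward application of Markov's inequality,'' and your Markov-plus-union-bound fleshes that out correctly. Your observation about the constant is also right: since $\E\,g_k(\X^{(i)})=\alpha k$, the union bound gives $\P[A^c]\le m\alpha k/T$, so the threshold that yields $\P[A^c]\le\ve$ is $T=m\alpha k/\ve$, and the stated $T=mk/(\alpha\ve)$ only dominates this when $\alpha\le 1$; for $\alpha>1$ the lemma as written is off by a factor of $\alpha^2$, but as you note this is immaterial because Proposition~\ref{prop:Green3.7} only needs $\L(I,\X^{(i)})\subset[0,ck]$ for some constant $c=c(m,\alpha,\ve)$.
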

\begin{proof}
  This is a straightforward application of Markov's inequality.
\end{proof}

Recall our notation that $\X^{(i)}=\bigl(X^{(i)}_j\bigr)_{i=1}^{\infty}$. 
\begin{lemma}\label{lem:Green3.3}
  Fix $\ve\in (0,1/2)$. For any $\delta>0$, there is a constant $C=C(\ve,\delta,m,\alpha)$ such that the event
  \begin{equation}\label{eqn:eventEdefine}
    E=E(\ve,\delta,m,\alpha):=\left\{\min_{i}\sum_{\ell<j\leq k}X_j^{(i)}\geq (1-\delta)\alpha\log(k/\ell)-C\quad \forall 1\leq \ell\leq k\right\}
  \end{equation}
  holds with probability $\P[E]\geq 1-\ve$.
\end{lemma}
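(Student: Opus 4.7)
The plan is to prove the bound at a dyadic collection of values of $\ell$ using Chernoff for Poisson, then interpolate to all $\ell$ by monotonicity. Write $Y^{(i)}_\ell := \sum_{\ell < j \leq k} X^{(i)}_j$, which is Poisson with mean
\[
	\mu_\ell := \alpha \sum_{\ell < j \leq k} \tfrac{1}{j} = \alpha \log(k/\ell) + O(1),
\]
the $O(1)$ being uniform in $\ell \in [1,k]$. Note that $\ell \mapsto Y^{(i)}_\ell$ is nonincreasing.

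Fix $r_0 \in \mathbb{N}$ to be chosen later, and set $\ell_r := \lfloor k \cdot 2^{-r}\rfloor$ for $r = 0, 1, \ldots, R := \lfloor \log_2 k \rfloor$. For any $\ell \in (\ell_{r+1}, \ell_r]$, monotonicity gives $Y^{(i)}_\ell \geq Y^{(i)}_{\ell_r}$, while $(1-\delta)\alpha \log(k/\ell) \leq (1-\delta)\alpha \log(k/\ell_{r+1}) \leq (1-\delta)\alpha \log(k/\ell_r) + (1-\delta)\alpha \log 2$. Hence it suffices to establish, with probability at least $1-\ve$, that
\[
	Y^{(i)}_{\ell_r} \geq (1-\delta)\alpha \log(k/\ell_r) - C' \qquad \text{for all } 1\leq i \leq m \text{ and } 0 \leq r \leq R,
\]
for some constant $C' = C'(\ve,\delta,m,\alpha)$; the statement of the lemma then follows with $C = C' + (1-\delta)\alpha\log 2 + O(1)$. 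For indices $r \leq r_0$ the right-hand side above is $\leq 0$ once we choose $C' \geq (1-\delta)\alpha (r_0+1)\log 2$, so the inequality is automatic from $Y^{(i)}_{\ell_r} \geq 0$. For $r > r_0$ we have $\mu_{\ell_r} \geq \alpha r \log 2 - c_0$ for some absolute $c_0$, so the classical lower-tail Chernoff bound for Poisson,
\[
	\P\bigl[ \Poi(\mu) \leq (1-\delta)\mu \bigr] \leq \exp\bigl(-\tfrac{\delta^2}{2}\mu\bigr),
\]
yields $\P[ Y^{(i)}_{\ell_r} \leq (1-\delta)\mu_{\ell_r}] \leq \exp\bigl(-\tfrac{\delta^2\alpha\log 2}{2} r + O(1)\bigr)$.

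A union bound over the $m$ copies and over $r > r_0$ gives
\[
	\P\bigl[\exists\, i,\, r > r_0 \colon Y^{(i)}_{\ell_r} < (1-\delta)\mu_{\ell_r}\bigr] \leq m \sum_{r > r_0} \exp\bigl(-\tfrac{\delta^2 \alpha \log 2}{2} r + O(1)\bigr),
\]
which is the tail of a convergent geometric series and hence can be made smaller than $\ve$ by taking $r_0 = r_0(\ve,\delta,m,\alpha)$ large enough. Choosing $C'$ as above and then $C$ accordingly finishes the proof. The only mildly delicate step is lining up the constants in the dyadic interpolation; the rest is a routine Poisson concentration argument.
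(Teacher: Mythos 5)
Your proposal is correct and is essentially the same argument as the paper's: both reduce to a dyadic family of $\ell$-values by monotonicity of $\ell\mapsto\sum_{\ell<j\leq k}X_j^{(i)}$, handle the range of $\ell$ near $k$ trivially by absorbing it into the constant $C$, use Poisson concentration (you quote the Chernoff lower-tail bound; the paper expands the exponential moment $\E a^{\Poi(\lambda)}=e^{\lambda(a-1)}$ directly, which is the same computation), and finish with a geometric-series union bound over the dyadic scales and the $m$ copies. The only cosmetic difference is that the paper restricts attention to $\ell<e^{-C}k$ from the outset rather than introducing a cutoff $r_0$, but these play the identical role.
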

We remark that this is a straightforward generalization of \cite[Lemma 3.3]{efg}.
\begin{proof}
  In fact, we will take $C\gg\frac{1}{\alpha\delta^2}\log(m/\ve)$ while ensuring that $C\geq 1$. Note that the claim holds vacuously when $\ell \geq e^{-C}k$, so we need only consider the case $\ell<e^{-C}k$.

  Fix $1\leq i\leq m$ and let $B$ denote the event that $\sum_{\ell<j\leq k}X_j^{(i)}<(1-\delta)\alpha \log(k/\ell)-1$ for some $\ell\leq e^{-C}k$. Writing $\ell'$ for the smallest power of $2$ with $\ell'>\ell$, we thus have
  \begin{equation}\label{eqn:partialSumIneqXj}
    \sum_{\ell'<j\leq k}X_j^{(i)}\leq \alpha(1-\delta)\log(k/\ell')
  \end{equation}
  on $B$.

  Let $\m D=\{2^i\}_{i\geq 0}$ denote the set of non-negative powers of $2$. From \eqref{eqn:partialSumIneqXj} it follows that
  \begin{equation}\label{eqn:pointwise1Bbound}
    1_B\leq \sum_{\substack{\ell'\leq 2e^{-C}k,\\\ell'\in \m D.}}(1-\delta)^{\sum_{\ell'<j\leq \ell}X_j^{(i)}-\alpha(1-\delta)\log(k/\ell')}.
  \end{equation}
  Each of the variables $X_j^{(i)}$ are independent and Poisson with mean $\E X_j^{(i)}=\frac{\alpha}{j}$. For a Poisson variable $P$ with mean $\lambda$, we have $\E a^P=e^{\lambda(a-1)}$. Moreover, the sum $\sum_{\ell'<j\leq k}X_j^{(i)}$ is Poisson with mean $\alpha\log(k/\ell')+O(1)$. Taking expectations on both sides of \eqref{eqn:pointwise1Bbound} therefore yields
  \begin{equation*}
    \P[B]\ll\sum_{\substack{\ell'\leq 2e^{-C}k,\\\ell'\in \m D.}} \exp \left[-\alpha\delta-\alpha(1-\delta)\log(1-\delta)\right]\log(k/\ell')
  \end{equation*}
  Near zero, $-\delta-(1-\delta)\log(1-\delta)=-(1+o(1))\delta^2$. Therefore
  \begin{equation*}
    \P[B]\ll \sum_{\substack{\ell'\leq 2e^{-C}k,\\\ell'\in \m D.}}(k/\ell')^{-\alpha\delta^2}\ll e^{-\alpha\delta^2 C},
  \end{equation*}
  since the sum of the geometric series is bounded by a constant multiple of its first term. Choosing $C\gg\frac{1}{\alpha\delta^2}\log(m/\ve)$ ensures that $\P[B]\leq \frac{\ve}{m}$, and now the claim follows by the union of events bound.
\end{proof}

An insight in \cite{efg} (to which they credit the idea to Maier and Tenenbaum from \cite{MT}) is that Fourier analysis may be used to obtain a lower bound on the size of the set
\begin{equation}\label{eqn:S(I,X)dfn}
  S(I;\X^{(1)},\ldots,\X^{(m)}):=\left\{(n_i-n_m)_{1\leq i<m}\colon n_i\in \L(I,\X^{(i)})\right\}.
\end{equation}
We show that the technique extends 
to the present level of generality. Note that the analysis of \cite{efg} may be regarded as the special case $m=3$ and $\alpha=1$ of our arguments.

Let $\m T=\m R/\m Z$ be the unit torus. For $\theta\in \m T$, let $\|\theta\|$ denote the distance to $\m Z$ and set $e(z)=e^{2\pi iz}$. Write $\bm \theta=(\theta_1,\ldots,\theta_m)\in \m T^m$. Define \[\m T_0^m=\{\bm \theta\in \m T^m\colon \theta_1+\cdots +\theta_m=0\}.\]
Fix an interval $I$. We define the function $F(\bm\theta)=F_I\bigl(\bm\theta,\X^{(1)},\ldots,\X^{(m)}\bigr)$ via the formula
\begin{equation}\label{eqn:Ftheta}
  F(\bm \theta):=\prod_{j\in I}\prod_{i=1}^m \left(\frac{1+e(j\theta_i)}{2}\right)^{X_j^{(i)}}.
\end{equation}
Writing $\theta_m=-\theta_1-\cdots-\theta_{m-1}$, we regard $F(\bm\theta)$ as a function on $\m T^{m-1}$. It is straightforward to verify that its Fourier transform is a function $\hat F\colon \m Z^{m-1}\to \m C$ supported on the set $S(I;\X^{(1)},\ldots,\X^{(m)})$ defined above in \eqref{eqn:S(I,X)dfn}.

The Cauchy-Schwarz inequality yields that
\begin{equation}\label{eqn:CS}
  \sum_{a\in \m Z^{m-1}}|\hat F(a)|^2\sum_{a\colon \hat F(a)\not=0}1\geq \left[\sum_{a\in \m Z^{m-1}}\hat F(a)\right]^2.
\end{equation}
Recognizing that $S(I;\X^{(1)},\ldots,\X^{(m)})\geq \bigl|\{a\colon \hat F(a)\not=0\}\bigr|$, that $\sum_{a\in \m Z^{m-1}}\hat F(a)=F(0)=1$, and that by Parseval's identity $\sum_{a\in \m Z^{m-1}}|\hat F(a)|^2=\int_{\m T^m_0} |F(\bm \theta)|^2\ d\theta$, it follows from \eqref{eqn:CS} that
\begin{equation}\label{eq:intbound}
  |S(I;\X^{(1)},\ldots,\X^{(m)})|\geq \left(\int_{\m T^m_0} |F(\bm \theta)|^2\ d\theta\right)^{-1}.
\end{equation}

Our next task is to bound the integral appearing in \eqref{eq:intbound}. We will achieve this in two steps: first, by obtaining a pointwise bound with respect to $\bm \theta$ in Lemma \ref{lem:Green3.5}, and second, by integrating out the $\bm \theta$ dependence in Corollary \ref{cor:Green3.6}.

Let $\beta\in (0,1)$ be a real parameter satisfying
\begin{equation*}
  \beta \alpha\log 2= 1-m^{-1}+\delta_2,
\end{equation*}
where $\delta_2>0$ and $\delta_2=o_\delta(1)$ is the parameter occurring in \eqref{eqn:eventEdefine}. (Note that this is possible since $\alpha$ is a continuity point of $h$, so $m<(1-\alpha\log 2)^{-1}$.) Recall the event $E=E(\ve,\delta,m,\alpha)$ from Lemma \ref{lem:Green3.3}.
\begin{lemma}\label{lem:Green3.5}
  Fix $\ve\in (0,1/2)$ and let $I$ denote the interval $I := (k^{1-\beta}, k ]$. For a tuple $\bm \theta=(\theta_1,\ldots,\theta_m)$, let $j(\bm\theta)$ denote an index for which $\|\theta_j\|$ is maximal. Then
  \begin{equation}\label{eqn:boundGreen35}
    \E_{\X^{(1)},\ldots,\X^{(m)}}1_{E}|F(\bm \theta)|^2\ll_{\ve,m} \prod_{\substack{i\colon 1\leq i\leq m,\\i\not=j(\bm\theta)}} (\|k\, \theta_i\|\vee 1)^{-1+\delta_2}.
  \end{equation}
\end{lemma}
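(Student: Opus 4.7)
My plan is to factor the expectation over coordinates $i=1,\ldots,m$, bound the factor indexed by $i=j(\bm\theta)$ trivially, and estimate the remaining $m-1$ factors by combining the Poisson moment generating function with the tail bounds provided by $E$. First I would use the decomposition $E=\bigcap_{i=1}^m E_i$, where
\[
E_i:=\Bigl\{\sum_{\ell<j\leq k}X_j^{(i)}\geq(1-\delta)\alpha\log(k/\ell)-C\ \forall\,1\leq\ell\leq k\Bigr\}
\]
depends only on $\X^{(i)}$. Independence of the $\X^{(i)}$'s across $i$ then factorizes
\[
\E 1_E|F(\bm\theta)|^2=\prod_{i=1}^m\E\Bigl[1_{E_i}\prod_{j\in I}\cos^{2X_j^{(i)}}(\pi j\theta_i)\Bigr].
\]
The factor for $i=j(\bm\theta)$ is bounded by $1$ using $\cos^2\leq 1$, reducing the problem to showing, for each $i\neq j(\bm\theta)$ (writing $\theta=\theta_i$ and dropping the superscript),
\[
\E\Bigl[1_{E_i}\prod_{j\in I}\cos^{2X_j}(\pi j\theta)\Bigr]\ll_{\ve,m}(\|k\theta\|\vee 1)^{-1+\delta_2}.
\]
The case $\|k\theta\|\leq 1$ is trivial.

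For $\|k\theta\|>1$ I would first compute the unconditional expectation via the Poisson moment generating function $\E a^{X_j}=\exp(\tfrac{\alpha}{j}(a-1))$ applied to $a=\cos^2(\pi j\theta)$, which gives
\[
\E\prod_{j\in I}\cos^{2X_j}(\pi j\theta)=\exp\Bigl(-\sum_{j\in I}\tfrac{\alpha}{j}\sin^2(\pi j\theta)\Bigr).
\]
Combining $\sin^2=\tfrac12(1-\cos)$ with the standard partial-summation estimate $\sum_{j\leq N}\cos(2\pi j\theta)/j=\log\min(N,\|\theta\|^{-1})+O(1)$ evaluates the exponent to $\tfrac\alpha2\log\|k\theta\|+O(1)$ in the regime $1\leq\|k\theta\|\leq k^\beta$. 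This yields only the unconditional bound $\|k\theta\|^{-\alpha/2+o(1)}$, which is insufficient since $\alpha/2<1-\delta_2$ at continuity points of $h$. To close the gap I would invoke $E_i$ to sharpen the pointwise estimate $\prod_j\cos^{2X_j}\leq\exp(-\sum_j X_j\sin^2(\pi j\theta))$: decomposing $I$ dyadically at the threshold $1/\|\theta\|$, on large scales $2^s\gtrsim 1/\|\theta\|$ the fractional parts $\{j\theta\}$ equidistribute modulo $1$ so $\sin^2(\pi j\theta)$ averages to $1/2$, while on small scales the contribution is absorbed in $O(1)$.

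Summing the tail bounds $\sum_{j>2^s}X_j\geq(1-\delta)\alpha\log(k/2^s)-C$ supplied by $E_i$ against these oscillatory weights then yields, on $E_i$, the pointwise lower bound
\[
\sum_{j\in I}X_j\sin^2(\pi j\theta)\geq(1-\delta_2)\log\|k\theta\|-O(1),
\]
where the relation $\beta\alpha\log 2=1-m^{-1}+\delta_2$ together with $m\geq 2$ forces $2\alpha\log 2\geq 1-\delta_2$ at continuity points of $h$, aligning the constants. Substituting back gives the required $\|k\theta\|^{-1+\delta_2}$ bound.

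The principal technical hurdle will be this last step: converting the cumulative tail bounds from $E_i$ into a lower bound on the oscillatory weighted sum $\sum X_j\sin^2(\pi j\theta)$. Because $\sin^2(\pi j\theta)$ is non-monotone in $j$, Abel summation does not apply directly; I anticipate proceeding scale-by-scale in the dyadic decomposition, using equidistribution of $\{j\theta\}$ to harvest a constant proportion of the mass $\sum_{j\in I_s}X_j$ on each sufficiently large scale, and absorbing the small-scale and lower-order contributions into the implicit constant $C(\ve,m)$.
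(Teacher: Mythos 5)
The central idea of the proposal---factor by coordinate, bound the $j(\bm\theta)$ factor trivially by~$1$, and prove the stronger per--coordinate bound
\[
\E\Bigl[1_{E_i}\prod_{j\in I}\cos^{2X_j^{(i)}}(\pi j\theta_i)\Bigr]\ll(\|k\theta_i\|\vee1)^{-1+\delta_2}
\]
for each $i\neq j(\bm\theta)$---cannot succeed, and the gap is structural rather than a matter of sharpening constants. By independence, $\E\,1_E|F(\bm\theta)|^2$ does factor into $m$ coordinate pieces, but the true decay rate of each piece is only roughly $(\|k\theta_i\|\vee1)^{-(m-1)/m+o(1)}$, which is \emph{larger} than $(\|k\theta_i\|\vee1)^{-1}$. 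You can see this already from the target: the integrated bound in Corollary~\ref{cor:Green3.6} is $k^{1-m}$, and for the diagonal point $\theta_1=\cdots=\theta_m$ with $\|k\theta_i\|\asymp k$, the $m$ symmetric factors must each contribute about $k^{-(m-1)/m}$. Discarding the $j(\bm\theta)$ factor therefore leaves you short by a factor $\approx k^{(m-1)/m}$. The paper does not discard that factor; it retains the bound $k_{j(\bm\theta)}^{-\alpha(1-\delta)\log 2}$ from the maximal coordinate and uses the geometric--mean inequality $k_{j(\bm\theta)}\geq\prod_{i\neq j(\bm\theta)}k_i^{1/(m-1)}$ to redistribute that exponent onto the other $m-1$ factors, boosting each exponent by the crucial ratio $\tfrac{m}{m-1}$. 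That boost is exactly what lifts $\alpha\beta(1-\delta)\log 2$ (which is $<1$) up to $\tfrac{m}{m-1}\alpha\beta(1-\delta)\log 2=1+o_\delta(1)$. Without it, the desired exponent is unreachable.

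Your proposed mechanism for closing the gap---a pointwise lower bound $\sum_{j\in I}X_j\sin^2(\pi j\theta)\geq(1-\delta_2)\log\|k\theta\|-O(1)$ holding on $E_i$---is false, not merely unproved. Take $\theta=1/2$: then $\sin^2(\pi j\theta)$ vanishes for even $j$ and equals $1$ for odd $j$, so the sum is $\sum_{j\text{ odd}}X_j$, which can be identically zero while $E_i$ still holds (place all the mass of the $X_j$ on even indices). More fundamentally, the equidistribution heuristic fails because the nonzero $X_j$ are sparse---only $O(1)$ per dyadic scale in expectation---so a realization compatible with $E_i$ can avoid all $j$ with $\sin^2(\pi j\theta)$ bounded away from $0$. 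And even in expectation the arithmetic does not close: on $E_i$ one has only $\sum_{j\in I}X_j\gtrsim(1-\delta)\alpha\beta\log k$, and since $\alpha\beta\log 2=1-m^{-1}+\delta_2<1$ one cannot push the exponent past $1-m^{-1}$ per coordinate by any pointwise estimate on $\sum X_j\sin^2$. The paper instead bounds the indicator $1_E$ by the tilt $\prod_i k_i^{-\alpha(1-\delta)\log2}\prod_i 2^{Y_i}$, shows $\E\bigl[|F(\bm\theta)|^2\prod_i 2^{Y_i}\bigr]\ll_{\ve,m}1$ via the Poisson generating function and the cosine--sum estimate of \cite[Lemma 3.4]{efg} (the one ingredient you identify correctly), and only then invokes the geometric--mean transfer. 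You need both the tilting and the redistribution; your plan supplies neither.
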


\begin{proof}
  For each $1\leq i\leq m$, set $t_i:=k\, \theta_i$ and define the cutoff parameter 
  \[
    k_i:=\begin{cases}
      k^{\beta},& \|t_i\|\geq k^{\beta}\\
      \|t_i\|,& 1<\|t_i\|<k^{\beta}\\
      1,& \|t_i\|\leq 1.
    \end{cases}
  \]
  Note that $k_i\geq 1$ and $k_i\geq \|t_i\|^{\beta}$. For each $1\leq i\leq m$, let $Y_i$ denote the expression
  \begin{equation*}
    Y_i:=\sum_{k_i<j\leq k} X_j^{(i)}.
  \end{equation*}
  By definition of the event $E$, when it occurs we have that
  \[
    \sum_{i=1}^m Y_i\geq \alpha(1-\delta)\sum_{i=1}^m \log k_i-C(\ve).
  \]
  It will be convenient to rewrite this bound in the form
  \begin{equation}\label{eqn:1Ebound}
    1_E\ll_{\ve}\prod_{i=1}^m k_i^{-\alpha(1-\delta)\log 2}\prod_{i=1}^m 2^{Y_i}.
  \end{equation}

  We define the quantity
  \begin{equation}\label{eqn:Rdef}
    R=R\bigl(\theta,\X^{(1)},\ldots,\X^{(m)},\ve,\delta,\alpha\bigr):=|F(\bm \theta)|^2\prod_{i=1}^m 2^{Y_i}.
  \end{equation}
  To establish the bound \eqref{eqn:boundGreen35}, we will first show that $\E_{\X^{(1)},\ldots,\X^{(m)}}R\ll_{\ve,m}1$. Once this has been established, it will follow directly from \eqref{eqn:1Ebound} that
  \begin{equation}\label{eqn:boundEFtheta}
    \E1_E|F(\bm \theta)|^2 \ll_{\ve,m} \prod_{i=1}^m k_i^{-\alpha(1-\delta)\log 2}.
  \end{equation}
  Obtaining \eqref{eqn:boundGreen35} from \eqref{eqn:boundEFtheta} is a simple exercise in lower bounding the maximum of a sequence by its geometric mean. Recalling the definition of $j(\bm \theta)$ above, we have that
  \begin{equation*}
    k_{j(\bm\theta)}\geq \prod_{\substack{i\colon 1\leq i\leq m,\\i\not=j(\bm \theta)}}k_i^{\frac{1}{m-1}}.
  \end{equation*}
  Consequently
  \begin{equation}\label{eqn:prodBoundGeom}
    \prod_{i=1}^m k_i^{-\alpha(1-\delta)\log 2}\leq \prod_{\substack{i\colon 1\leq i\leq m,\\i\not=j(\bm \theta)}} k_i^{-\frac{m}{m-1}\alpha(1-\delta)\log 2}.
  \end{equation}
  Substituting the lower bound $k_i\geq \bigl(\|k\,\theta_i\|\vee 1\bigr)^{\beta}$ into \eqref{eqn:prodBoundGeom}, we see that \eqref{eqn:boundEFtheta} implies
  \begin{equation*}
    \E\, 1_E|F(\bm\theta)|^2\ll_{\ve,m}\prod_{\substack{i\colon 1\leq i\leq m,\\i\not=j}}\bigl(\|k\, \theta_i\|\vee 1\bigr)^{-\frac{m}{m-1}\alpha\beta(1-\delta)\log 2},
  \end{equation*}
  and since $\frac{m}{m-1}\alpha\beta(1-\delta)\log 2=1+o_{\delta}(1)$ by definition of $\beta$, the desired bound follows.

  Finally we justify that $\E_{\X^{(1)},\ldots,\X^{(m)}}R\ll_{\ve,m}1$. Substituting \eqref{eqn:Ftheta} into \eqref{eqn:Rdef} yields that
  \begin{equation*}
    R=\prod_{i=1}^{m} \left[2^{Y_i}\prod_{j\in I}\left|\frac{1+e(j\theta_i)}{2}\right|^{2X_j^{(i)}}\right].
  \end{equation*}
  Recall that $Y_i=\sum_{k_i<j\leq k} X_j^{(i)}$ and that $I=(k^{1-\beta},k]$. Thus
  \begin{equation*}
    R=\prod_{i=1}^m \left[\prod_{k^{1-\beta}<j\leq k_i}\left|\frac{1+e(j\theta_i)}{2}\right|^{2X_j^{(i)}}
    \prod_{k_i<j\leq k}\left|\frac{1+e(j\theta_i)}{\sqrt{2}}\right|^{2X_j^{(i)}}\right].
  \end{equation*}
  To compute $\E R$, we recall that if $P$ is a Poisson random variable with mean $\lambda$, then ${\log\E a^P=e^{\lambda(a-1)}}$.
   Since the variables $X_j^{(i)}$ are independent Poisson variables with mean $\E X_j^{(i)}=\frac{\alpha}{j}$, it follows after some simplification that
  \begin{equation}\label{eqn:Green35Key}
    \E R=\exp\sum_{i=1}^m \left[\sum_{k^{1-\beta}<j\leq k_i}\frac{\alpha\bigl(\cos (2\pi j\theta_i)-1\bigr)}{2j}+
    \sum_{k_i<j\leq k}\frac{\alpha \cos(2\pi j\theta_i)}{j}\right].
  \end{equation}

  Next we apply the following bound from \cite[Lemma 3.4]{efg}:
  \begin{equation}\label{eqn:efg34}
    \sum_{j\leq k}\frac{\cos(2\pi j\theta)}{j}=\log \min(k,\|\theta\|^{-1})+O(1).
  \end{equation}
  Substituting into \eqref{eqn:Green35Key} yields that
  \begin{equation*}
    \E R= \exp\alpha\sum_{i=1}^m\left[\frac{1}{2}\log\frac{k^{\beta}\vee\|k\,\theta_i\|}{k_i\vee\|k\,\theta_i\|}-\frac{1}{2}\log\frac{k^{\beta}}{k_i}+\log\frac{k_i\vee \|k\,\theta_i\|}{1\vee \|k\,\theta_i\|}+O(1)\right].
  \end{equation*}
  By definition of $k_i$, each summand is $O(1)$. Therefore $\E R\ll_{\ve,m}1$, as desired.
\end{proof}


Finally we integrate the bound \eqref{eqn:boundGreen35} with respect to $\bm \theta$ to obtain a bound on the integral appearing in \eqref{eq:intbound}.
\begin{cor}\label{cor:Green3.6} Let $I = (k^{1-\beta},k]$ and recall the function $F(\bm\theta)=F_I(\bm\theta)$ from \eqref{eqn:Ftheta}. Then
  \begin{equation*}
    \int_{\m T_0^m}\E 1_E |F(\bm \theta)|^2\ d\bm \theta\ll_{\ve,m} k^{1-m}.
  \end{equation*}
\end{cor}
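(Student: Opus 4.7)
The plan is to apply the pointwise bound of Lemma~\ref{lem:Green3.5} directly inside the integral and then carry out the $\bm\theta$-integration. Thus it suffices to bound
\[
\int_{\m T_0^m}\prod_{i\neq j(\bm\theta)}(\|k\theta_i\|\vee 1)^{-1+\delta_2}\,d\bm\theta.
\]
To eliminate the dependence on the special index $j(\bm\theta)$, I would partition $\m T_0^m=\bigsqcup_{j=1}^m R_j$ with $R_j=\{\bm\theta : j(\bm\theta)=j\}$, enlarge each integration region from $R_j$ up to all of $\m T_0^m$, and then invoke the $S_m$-symmetry of both $\m T_0^m$ and the integrand under coordinate permutation to see that all $m$ resulting integrals agree. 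This yields
\[
\int_{\m T_0^m}\prod_{i\neq j(\bm\theta)}(\|k\theta_i\|\vee 1)^{-1+\delta_2}\,d\bm\theta\leq m\int_{\m T_0^m}\prod_{i=1}^{m-1}(\|k\theta_i\|\vee 1)^{-1+\delta_2}\,d\bm\theta.
\]

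Next, I would parametrize $\m T_0^m$ by $(\theta_1,\ldots,\theta_{m-1})\in\m T^{m-1}$, using $\theta_m=-(\theta_1+\cdots+\theta_{m-1})$. Since the integrand in the displayed bound depends only on $\theta_1,\ldots,\theta_{m-1}$, the integral factorizes:
\[
\int_{\m T_0^m}\prod_{i=1}^{m-1}(\|k\theta_i\|\vee 1)^{-1+\delta_2}\,d\bm\theta=\left(\int_{\m T}(\|k\theta\|\vee 1)^{-1+\delta_2}\,d\theta\right)^{m-1}.
\]
The substitution $u=k\theta$ reduces each one-dimensional factor to $k^{-1}\int_{-k/2}^{k/2}(|u|\vee 1)^{-1+\delta_2}\,du$, which is estimated by splitting at $|u|=1$: the bounded region contributes $O(1)$, and the tail contributes $O_{\delta_2}(k^{\delta_2})$. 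Consequently each factor is $O_{\delta_2}(k^{-1+\delta_2})$.

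Raising to the $(m-1)$-st power produces the overall bound $O_{\delta_2}(k^{1-m+(m-1)\delta_2})$. Since $\delta_2>0$ can be chosen arbitrarily small relative to the slack in $\beta\alpha\log 2>1-m^{-1}$ (using that $\alpha$ is a continuity point of $h$), the extra $k^{(m-1)\delta_2}$ factor is absorbed into the implicit constant depending on $\ve$ and $m$, giving the claimed $\ll_{\ve,m}k^{1-m}$. The argument is essentially mechanical once the pointwise bound of Lemma~\ref{lem:Green3.5} is in hand; the only mildly delicate point is correctly handling the affine constraint defining $\m T_0^m$, which is why the symmetry step to remove $j(\bm\theta)$ from the product is crucial before the factorization.
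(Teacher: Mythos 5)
Your overall strategy — apply Lemma~\ref{lem:Green3.5} pointwise, remove the dependence on $j(\bm\theta)$ by symmetry, factor the integral over $\m T_0^m$ into a product of one-dimensional integrals, and evaluate by the substitution $u=k\theta$ — is exactly the paper's route. However, there is a genuine gap in the final step, and it stems from the exponent.

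You used the exponent $-1+\delta_2$, which is what the displayed statement of Lemma~\ref{lem:Green3.5} literally says. But $-1+\delta_2>-1$, so the one-dimensional integral $\int_{-k/2}^{k/2}(|u|\vee 1)^{-1+\delta_2}\,du$ genuinely grows like $k^{\delta_2}$, and after raising to the $(m-1)$-st power you are left with an extra factor of $k^{(m-1)\delta_2}$. You claim this can be ``absorbed into the implicit constant depending on $\ve$ and $m$.'' That is not valid: $\delta_2$ is a fixed positive number determined once and for all by the choice of $\beta$ (via $\beta\alpha\log 2=1-m^{-1}+\delta_2$), so $k^{(m-1)\delta_2}\to\infty$ as $k\to\infty$, while the constant in $\ll_{\ve,m}$ must be independent of $k$. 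You cannot shrink $\delta_2$ as $k$ grows without simultaneously changing $\beta$, and $\beta$ is needed fixed for the rest of the argument. As written, your estimate proves only the weaker bound $\ll_{\ve,m}k^{1-m+(m-1)\delta_2}$, which does not suffice for Proposition~\ref{prop:Green3.7}.

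The resolution, used in the paper, is that the exponent coming out of the proof of Lemma~\ref{lem:Green3.5} is in fact $-\frac{m}{m-1}\alpha\beta(1-\delta)\log 2=-(1+\delta_2')$ for some $\delta_2'>0$ (when $\delta$ is small relative to $\delta_2$), i.e.\ strictly less than $-1$; the display in the lemma appears to carry a sign slip. With the exponent $-(1+\delta_2')$ the one-dimensional integral $\int_{-k/2}^{k/2}(|u|\vee 1)^{-(1+\delta_2')}\,du\leq 2\int_1^{\infty}u^{-(1+\delta_2')}\,du+2$ is a finite constant independent of $k$, and the desired bound $\ll_{\ve,m}k^{1-m}$ follows cleanly. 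Your symmetry and factorization steps are fine; the issue is solely that $-1+\delta_2$ would make the tail integral diverge with $k$ and cannot be argued away by shrinking $\delta_2$ after the fact.
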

\begin{proof}
  After the change of variables $\bm t=k\bm \theta$, the bound reduces to verifying that
  \[
    \int_{k\cdot \m T_0^m}\E 1_E |F(\bm \theta)|^2\ d\bm t\ll_{\ve,m} 1.
  \]
  Applying Lemma \ref{lem:Green3.5}, we have
  \begin{equation}\label{eqn:intSymUpper}
    \int_{k\cdot \m T_0^m}\E 1_E |F(\bm \theta)|^2\ d\bm t
      \ll_{\ve,m}
    \int_{k\cdot \m T_0^m}\prod_{\substack{i\colon 1\leq i\leq m,\\i\not=j(\bm t).}} \bigl(\|t_i\|\vee 1\bigr)^{-(1+\delta_2)}\ d\bm t.
  \end{equation}
  By symmetry, we may upper bound the integral \eqref{eqn:intSymUpper} by
  \begin{equation}\label{eqn:mUpperBound}
    m \int_{k\cdot \m T_0^m}\prod_{i=1}^{m-1} \bigl(\|t_i\|\vee 1\bigr)^{-(1+\delta_2)}\ d\bm t=m \left(\int_{-k/2}^{k/2}\left( |t_i|\vee 1\right)^{-(1+\delta_2)}\ dt_i\right)^{m-1}.
  \end{equation}
  The rightmost integral in \eqref{eqn:mUpperBound} is bounded above by
  \begin{equation*}
    m \left(2\int_{1}^{\infty}t_i^{-(1+\delta_2)}\ dt_i+2\right)^{m-1}.
  \end{equation*} 
  The desired result now follows since the latter integral is finite and independent of $k$.
\end{proof}
Substituting the result of Corollary \ref{cor:Green3.6} into \eqref{eq:intbound} lets us show $\lvert S(I;\X^{(1)},\ldots,\X^{(m)}) \rvert \gg k^{m-1}$ with high probability.


\begin{prop} \thlabel{prop:Green3.7}
	Let $I=(k^{1-\beta}, k ]$. With probability greater than $1/2$, $S(I; \X^{(1)}, \ldots, \X^{(m)}) \subset [-ck, ck]^{m-1}$ for some constant $c$ and $\lvert S(I;\X^{(1)},\ldots,\X^{(m)}) \rvert \gg k^{m-1}$.
\end{prop}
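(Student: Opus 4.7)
The plan is to assemble the pieces from Lemmas \ref{lem:Green3.2} and \ref{lem:Green3.3}, Corollary \ref{cor:Green3.6}, and the Cauchy–Schwarz bound \eqref{eq:intbound} via Markov's inequality and a union bound. All of the substantial work has already been done; this final step is essentially bookkeeping.

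First, I would fix $\varepsilon = 1/8$ (or any convenient constant less than $1/6$) and a suitable $\delta > 0$, so that Lemma \ref{lem:Green3.2} supplies the event
\[
A=\Big\{\bigcup_{i=1}^m \L(I,\X^{(i)})\subset [0, mk/(\alpha\varepsilon)]\Big\}
\]
with $\P[A] \geq 7/8$, and Lemma \ref{lem:Green3.3} supplies the event $E = E(\varepsilon,\delta,m,\alpha)$ with $\P[E] \geq 7/8$. On $A$, every coordinate of every tuple in $S(I;\X^{(1)},\ldots,\X^{(m)})$ is a difference of two elements in $[0,mk/(\alpha\varepsilon)]$, hence lies in $[-ck,ck]$ with $c := m/(\alpha\varepsilon)$. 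This immediately yields the containment portion of the claim on the event $A$.

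Next, to handle the cardinality, I would apply Corollary \ref{cor:Green3.6}, which states that
\[
\E\Big[1_E \int_{\m T_0^m} |F(\bm\theta)|^2\, d\bm\theta\Big] \ll_{\varepsilon,m} k^{1-m}.
\]
Markov's inequality then produces a constant $C_1 = C_1(\varepsilon,m)$ such that the event
\[
B := \Big\{ 1_E \int_{\m T_0^m} |F(\bm\theta)|^2\, d\bm\theta \leq C_1 k^{1-m} \Big\}
\]
has probability at least $7/8$. On the intersection $E \cap B$ we have $1_E = 1$, so the integral is bounded above by $C_1 k^{1-m}$, and the Cauchy–Schwarz-derived inequality \eqref{eq:intbound} gives
\[
|S(I;\X^{(1)},\ldots,\X^{(m)})| \geq \Big(\int_{\m T_0^m} |F(\bm\theta)|^2\, d\bm\theta\Big)^{-1} \geq C_1^{-1} k^{m-1},
\]
which is the desired lower bound $\gg k^{m-1}$.

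Finally, a union bound applied to $A$, $E$, and $B$ shows that $A \cap E \cap B$ holds with probability at least $1 - 3 \cdot (1/8) = 5/8 > 1/2$. On this event both conclusions of the proposition hold simultaneously, completing the proof. There is no genuine obstacle to this step—the only thing to take care of is making sure the parameters $\varepsilon$ and $\delta$ are fixed once and for all before invoking Markov's inequality, so that the constant $C_1$ and hence the constant $c$ are genuine absolute constants depending only on $m$ and $\alpha$ (through the choice of $\beta$ and $\delta_2$ already made in Lemma \ref{lem:Green3.5}).
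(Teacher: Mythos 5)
Your proof is correct and follows essentially the same route as the paper: apply Lemma \ref{lem:Green3.2} for the containment, combine Lemma \ref{lem:Green3.3}, Corollary \ref{cor:Green3.6}, and Markov's inequality for the cardinality lower bound via \eqref{eq:intbound}, and finish with a union bound. The only differences are cosmetic choices of the constants $\varepsilon$ ($1/8$ versus the paper's $1/3$ and $1/20$) and the bookkeeping of whether the $E$ and Markov events are bounded jointly or separately; both give a final probability exceeding $1/2$.
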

\begin{proof}
	By Lemma \ref{lem:Green3.2} with $\ve = 1 / 3$, with probability greater than $2/3$ we have $\L (I , \X^{(i)}) \subset \left[0 , \frac{3mk}{\alpha} \right] \subset [ 0 , ck]$ for some constant $c$ and all $i$. It follows that $S(I; \X^{(1)}, \ldots, \X^{(m)}) \subset [-ck, ck]^{m-1}$.
	
	By Lemma \ref{lem:Green3.3} with $\ve = 1/20$, the event $E(\ve, \delta,m,\alpha)$ holds with probably greater $19/20$ and by Corollary \ref{cor:Green3.6},
	\[ 
		\int_{\m T_0^m}\E 1_E |F(\bm \theta)|^2\ d\bm \theta\ll_{\ve,m} k^{1-m}.
	\] 
	It follows by Markov's inequality that for some sufficiently large $L$,
	\begin{align*}
		\P \left[ \left\{ \int_{\m T_0^m} |F(\bm \theta)|^2\ d\bm \theta \ll k^{1-m}\right\}^c \right] &\leq \P \left[ E^c \right] + \P \left[ 1_E \int_{\m T_0^m} |F(\bm \theta)|^2\ d\bm \theta > L k^{1-m} \right] \\
		&\leq \frac{1}{20} + \frac{\E 1_E \int_{\m T_0^m} |F(\bm \theta)|^2\ d\bm \theta}{ L k^{1-m}} \\
		&\leq \frac{1}{10}.
	\end{align*}
	
	It then follows from \eqref{eq:intbound} that with probability at least $9/10$,
	\[ 
		\vert S(I; \X^{(1)}, \ldots \X^{(m)}) \vert  \geq \left(\int_{\m T^m_0} |F(\bm \theta)|^2\ d\theta\right)^{-1} \gg k^{m-1}.
	\]
	
	Both events then occur simultaneously with probability at least $1/2$, completing our proof.
\end{proof}
	
\begin{prop} \label{prop:Green3.8}
	Let $D$ be sufficiently large constant and $I=(k^{1-\beta}, Dk ]$. With probability bounded away from zero there exists an array $\left( x_j^{(i)} \colon 1\leq i\leq m, j\in I\right)$ for which
	\[
		0 \leq x_j^{(i)} \leq X_j^{(i)}
	\]
	for all $i$ and $j \in I$, and
	\[
		\sum_{j \in I} j x_j^{(1)} = \cdots = \sum_{j \in I} j x_j^{(m)}.
	\]
\end{prop}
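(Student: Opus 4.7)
The plan is to produce the array by adjoining, to the ``small'' decomposition captured by $S(I';\X^{(1)},\ldots,\X^{(m)})$ on the subinterval $I' := (k^{1-\beta}, k]$ from \thref{prop:Green3.7}, a single ``large'' summand $j_i \in (k, Dk]$ per color, chosen so that the tuple of differences cancels $\bm s$. Concretely, if $\bm s = (n_1 - n_m, \ldots, n_{m-1} - n_m) \in S(I';\X)$ with $n_i \in \L(I', \X^{(i)})$ realized by admissible $y^{(i)}_j \leq X^{(i)}_j$ supported on $I'$, then $\sum_{j\in I} j x^{(i)}_j$ is color-independent provided we can pick $j_1,\ldots,j_m \in (k, Dk]$ with $X^{(i)}_{j_i}\geq 1$ and $(j_m - j_1, \ldots, j_m - j_{m-1}) = \bm s$; one then takes $x^{(i)}_j = y^{(i)}_j + \mathbf{1}\{j = j_i\}$, and admissibility is automatic since $j_i > k$ lies outside the support of $y^{(i)}$. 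Crucially, $(X^{(i)}_j)_{j > k}$ is independent of $S(I';\X)$, so we may condition on the event from \thref{prop:Green3.7} that $|S| \gg k^{m-1}$ and $S \subseteq [-ck, ck]^{m-1}$ (probability $\geq 1/2$) and treat $S$ as deterministic while $(X^{(i)}_j)_{k < j \leq Dk}$ remain fully independent Poissons.

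For $D \geq 2c + 2$ set $J := ((c+1)k, (D-c)k]$, so $|J| \asymp k$ and $j - s_i \in (k, Dk]$ for every $\bm s \in S$, $j \in J$, and $i < m$. Define
\begin{equation*}
  N := \sum_{\bm s \in S}\sum_{j \in J} \mathbf{1}\bigl\{X^{(m)}_j \geq 1 \text{ and } X^{(i)}_{j - s_i} \geq 1 \text{ for all } 1 \leq i < m\bigr\},
\end{equation*}
so the conclusion holds on $\{N \geq 1\}$. Since $X^{(i)}_\ell \sim \Poi(\alpha/\ell)$ with $\ell \asymp k$ throughout this range, independence across colors gives that each indicator has conditional expectation at least $(c_1/k)^m$ for some $c_1 = c_1(\alpha, D) > 0$; hence $\E[N \mid S] \geq c_1^m |S||J|k^{-m} \geq c_2 > 0$ uniformly in $k$. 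For the second moment, expand
\begin{equation*}
  \E[N^2 \mid S] = \sum_{(\bm s, j),(\bm s', j')} \prod_{i=1}^m \P\bigl[X^{(i)}_{j - s_i} \geq 1 \text{ and } X^{(i)}_{j' - s'_i} \geq 1\bigr]
\end{equation*}
with the convention $s_m = s'_m = 0$, and classify each pair by the coincidence set $R := \{i : j - s_i = j' - s'_i\} \subseteq \{1,\ldots,m\}$. Factors indexed by $i \in R$ contribute $O(1/k)$ while those with $i \notin R$ contribute $O(1/k^2)$, so the product is $O(k^{-(2m - |R|)})$; each coincidence imposes one linear relation ($j = j'$ when $m \in R$, or $s_i - s'_i = j - j'$ when $i < m$), and combined with $|S| \leq (2ck+1)^{m-1}$ and $|J| \asymp k$ this yields at most $O(k^{2m - |R|})$ pairs compatible with a prescribed $R$. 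Summing over the $2^m$ choices of $R$ gives $\E[N^2 \mid S] = O(1)$, and the Paley--Zygmund inequality then furnishes a lower bound on $\P[N \geq 1 \mid S]$ that is a positive constant independent of $k$. Multiplying by the probability $\geq 1/2$ from \thref{prop:Green3.7} finishes the proof.

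The main technical work lies in the pair-counting for the second moment. One must separate the cases $m \in R$ (which forces $j = j'$ together with $s_i = s'_i$ for $i \in R \setminus \{m\}$) from $m \notin R$ (which instead couples the differences of $\bm s$ and $\bm s'$ to the shift $j - j'$), and argue that in both cases every additional coincidence genuinely removes a factor of $k$ from the trivial bound $|S|^2 |J|^2 = O(k^{2m})$. Only the coordinatewise bound $|S| \leq (2ck+1)^{m-1}$ is used; no finer structural information about $S$ is needed.
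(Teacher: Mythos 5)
Your proof is correct and follows essentially the same blueprint as the paper: use \thref{prop:Green3.7} to get a dense difference set $S$ inside a cube of side $\asymp k$, then adjoin to the "small" decomposition supported on $I'$ one "large" summand per color whose $m$-tuple of differences cancels an element of $S$, and argue via second moment that some such completion succeeds.

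The one place where your treatment genuinely differs is in how the large summands are found. The paper first peels off the $m$th color by selecting a single $j_m \in (2ck, cdk]$ with $X^{(m)}_{j_m}>0$, then asserts (without detail) that among the $\gg k^{m-1}$ tuples $(j_1,\ldots,j_{m-1})$ with $(j_m-j_\ell)_\ell \in S$, one has $X^{(\ell)}_{j_\ell}>0$ for all $\ell<m$ with probability bounded below. That last step is not a bare first-moment consequence — the same $j_\ell$ reappears across many tuples — and the paper's proof essentially defers to the analogous second-moment argument carried out for $m=3$, $\alpha=1$ in Eberhard--Ford--Green. You instead form the single count $N=\sum_{\bm s,j}\prod_i \mathbf{1}\{X^{(i)}_{j-s_i}\geq 1\}$ over both $\bm s\in S$ and $j\in J$, check $\E[N\mid S]\gg 1$, and bound $\E[N^2\mid S]$ by classifying pairs according to the coincidence set $R\subseteq\{1,\ldots,m\}$, noting each coincidence trades one linear degree of freedom in $(\bm s',j')$ for one factor of $1/k$ in the product of probabilities. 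This is a clean and complete replacement for the paper's elliptical step, and your observation that only the coordinatewise bound $|S|\leq(2ck+1)^{m-1}$ is needed (not any structure of $S$) is exactly right. The conditioning is also handled correctly: $S$ is measurable with respect to $(X^{(i)}_j)_{j\leq k}$, while all indices $j,j-s_i$ lie in $(k,Dk]$, so the Poissons appearing in $N$ are independent of the event from \thref{prop:Green3.7}. In short, same route, but you make the key probabilistic step explicit and uniform in $m$.
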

\begin{proof}
	We define the interval $I' = (k^{1-\beta} , k]$. By Proposition \ref{prop:Green3.7} with probability greater than $1/2$
	\[
		S(I'; \X^{(1)} , \ldots, \X^{(m)}) \subset [-ck, ck]^{m-1}
	\]
	and $\lvert S(I';\X^{(1)},\ldots,\X^{(m)}) \rvert \gg k^{m-1}$.
	
	Let $d = 2 (5/2)^{1/\alpha}$. Straightforward calculations tell us that independently with probability greater than $1/2$ there exists $j_m \in (2ck , cdk]$ such that $X_{j_m}^{(m)} > 0$. 
	Given this $j_m$, 
	there are $\gg k^{m-1}$ collections of constants $\{j_\ell\}_1^{m-1} \in (ck, (d+1)ck]^{m-1}$ such that
	\[
		(j_m - j_1, \ldots, j_m - j_{m-1}) \in S(I'; \X^{(1)}, \ldots, \X^{(m)} ).
	\]
	Therefore, independently with probability bounded away from zero, there is one such collection $\{j_\ell\}_1^{m-1}$ such that $X_{j_\ell}^{(\ell)} > 0$ for all $\ell$. 
	
	By the definition of $S(I';\X^{(1)}, \ldots, \X^{(m)})$ we have a set of values $\left(n_i \in \L (I', \X^{(i)}) \right)_{i=1}^m$ such that
	\[
		\left(n_1 - n_m \right)_1^{m-1} = (j_m - j_\ell)_1^{m-1}.
	\]
	Manipulating the terms coordinate-wise yields
	\[
		j_1 + n_1 = \cdots = j_m + n_m.
	\]
	Our claim follows by setting $D = (d+1)c$.
\end{proof}

\begin{cor} \thlabel{cor:Green3.9}
	$\bigcap_{i = 1}^m \L (\X^{(i)})$ is almost surely infinite.
\end{cor}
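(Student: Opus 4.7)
The plan is to iterate Proposition~\ref{prop:Green3.8} along a sequence of disjoint intervals, exploit the resulting independence, and invoke the second Borel--Cantelli lemma.

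First I choose integers $k_1 < k_2 < \cdots$ growing so fast that the intervals $I_\ell := (k_\ell^{1-\beta}, D k_\ell]$ are pairwise disjoint; concretely, it suffices that $k_{\ell+1}^{1-\beta} > D k_\ell$, which holds for any doubly-exponential sequence, e.g.\ $k_{\ell+1} = \lceil (2D k_\ell)^{1/(1-\beta)} \rceil$. For each $\ell$, let $E_\ell$ denote the event produced by Proposition~\ref{prop:Green3.8} applied with interval $I_\ell$ in place of $I$. On $E_\ell$ there is a nonzero common value $n_\ell := \sum_{j \in I_\ell} j\, x_j^{(i)} \in \bigcap_{i=1}^m \L(\X^{(i)})$, and since the construction in the proof of Proposition~\ref{prop:Green3.8} forces some $x_j^{(i)} > 0$ with $j \in I_\ell$, we have $n_\ell \geq k_\ell^{1-\beta}$. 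Because the constants in that proof depend only on $\alpha, m, \beta, \varepsilon$ and are scale-invariant in $k$, there is some $p > 0$ with $\P[E_\ell] \geq p$ uniformly in $\ell$.

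Since the intervals $I_\ell$ are pairwise disjoint, each $E_\ell$ is measurable with respect to the $\sigma$-algebra generated by $\{X_j^{(i)} : j \in I_\ell,\, 1 \leq i \leq m\}$, and these $\sigma$-algebras are mutually independent. Thus $\{E_\ell\}_{\ell \geq 1}$ is an independent sequence with $\sum_\ell \P[E_\ell] = \infty$, so by the second Borel--Cantelli lemma $E_\ell$ occurs for infinitely many $\ell$ almost surely. Each such occurrence contributes an element $n_\ell \geq k_\ell^{1-\beta}$ to $\bigcap_{i=1}^m \L(\X^{(i)})$; since $k_\ell \to \infty$, these values are unbounded along the subsequence of successful $\ell$. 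Consequently the intersection contains infinitely many distinct elements almost surely.

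The main obstacle is verifying that the lower bound $\P[E_\ell] \geq p$ is genuinely uniform in $\ell$. Inspecting the proofs of Lemmas~\ref{lem:Green3.2}--\ref{lem:Green3.5}, Corollary~\ref{cor:Green3.6}, and Proposition~\ref{prop:Green3.7}, one checks that after the rescaling $\bm t = k \bm \theta$ the Fourier integral takes a form independent of $k$, and that Poisson moment generating functions interact with the cutoffs only through dimensionless quantities such as $\alpha \log(k / k_i)$ that do not see the scale of $k$. This scale-invariance is precisely what allows the argument to be recycled at every $k = k_\ell$, and once it is confirmed the conclusion follows from the Borel--Cantelli bookkeeping above.
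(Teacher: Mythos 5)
Your proposal matches the paper's proof: both partition $\mathbb N$ into the disjoint intervals $I_\ell = (k_\ell^{1-\beta}, Dk_\ell]$ with $k_\ell$ growing doubly exponentially, invoke Proposition~\ref{prop:Green3.8} on each, and use independence of the resulting events together with the second Borel--Cantelli lemma. The only difference is that you spell out the uniformity of the probability bound and the fact that the constructed common values tend to infinity (so that they are genuinely distinct), points which the paper leaves implicit but which are correct.
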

\begin{proof}
	We define $k_1$ to be sufficiently large for Proposition \ref{prop:Green3.8} to hold, then inductively define $k_{i+1} = \left( D k_i \right)^{1/ (1-\beta)}$. 
	The intervals $I_i = (k_i^{1-\beta}, Dk_i]$ are pairwise disjoint and the set
	\[
		\bigcap_{j=1}^m \L (I_i, \X^{(j)})
	\]
  is non-empty with probability bounded away from zero. Since these events are independent for distinct values of $i$, the claim now follows from the second Borel-Cantelli lemma.
\end{proof}
\begin{proof}[Proof of \thref{prop:lower}]
As we had fixed $m = h(\alpha)$, by \thref{cor:Green3.9} $s_\alpha \geq h(\alpha)$ holds for continuity points of $h$. 
For points of discontinuity, we fix $\alpha' < \alpha$ such that $h(\alpha') = h(\alpha) = m$.
We couple two sets of independent realizations $\X^{(1)}(\alpha), \ldots, \X^{(m)}(\alpha)$ and $\X^{(1)}(\alpha') , \ldots, \X^{(m)}(\alpha') $ such that $X^{(i)}(\alpha') \subseteq X^{(i)}(\alpha)$ for all $i$.
Now $\bigcap_{i = 1}^m \L (\X^{(i)}(\alpha'))$ is almost surely finite by \thref{cor:Green3.9}.
Our coupling ensures $\L (\X^{(i)}(\alpha' ) \subseteq \L (\X^{(i)}(\alpha))$ for all $i$, so $\bigcap_{i = 1}^m \L (\X^{(i)(\alpha')}) \subseteq \bigcap_{i = 1}^m \L (\X^{(i)}(\alpha))$. 
Thus the second intersection is almost surely infinite, and our claim follows. 
\end{proof}
\begin{proof}[Proof of \thref{thm:main}]
This now follows immediately from \thref{prop:upper} and \thref{prop:lower}.	
\end{proof}

\bibliographystyle{amsalpha}
\bibliography{sumset_percolation}
	
\end{document}